\newtheorem{theorem}{Theorem}[section]
\newtheorem{lemma}[theorem]{Lemma}
\newtheorem{definition}[theorem]{Definition}
\newtheorem{remark}[theorem]{Remark}
\newcommand{\R}{\mathbb{R}}
\newcommand{\N}{\mathbb{N}}
\newcommand{\p}{\partial}
\definecolor{green}{rgb}{0,0.8,0} % Redefines the color green.
\newcommand{\relphantom}[1]{\mathrel{\phantom{#1}}}
\newcommand{\nrm}{\@ifstar{\nrmb}{\nrmi}}
\newcommand{\nrmi}[1]{\Vert{#1}\Vert}
\newcommand{\nrmb}[1]{\left\Vert{#1}\right\Vert}
\newcommand{\abs}{\@ifstar{\absb}{\absi}}
\newcommand{\absi}[1]{\vert{#1}\vert}
\newcommand{\absb}[1]{\left\vert{#1}\right\vert}
\newcommand{\brk}{\@ifstar{\brkb}{\brki}}
\newcommand{\brki}[1]{\langle{#1}\rangle}
\newcommand{\brkb}[1]{\left\langle{#1}\right\rangle}
\newcommand{\set}{\@ifstar{\setb}{\seti}}
\newcommand{\seti}[1]{\{#1\}}
\newcommand{\setb}[1]{\left\{ #1\right\}}
\newcommand{\br}[1]{\overline{#1}}
\newcommand{\VERT}[1]{{\left\vert\kern-0.25ex\left\vert\kern-0.25ex\left\vert #1 
    \right\vert\kern-0.25ex\right\vert\kern-0.25ex\right\vert}}
\DeclareMathOperator{\supp}{supp}
\newcommand{\aeq}{\simeq}
\newcommand{\aleq}{\lesssim}
\newcommand{\ageq}{\gtrsim}
\newcommand{\lap}{\Delta}
\newcommand{\ud}{\mathrm{d}}
\newcommand{\rd}{\partial}
\newcommand{\imp}{\Rightarrow}
\newcommand{\peq}{\relphantom{=}}			% makes an empty space with the same width as =
\newcommand{\pleq}{\relphantom{\leq}}			% makes an empty space with the same width as \leq
\newcommand{\alp}{\alpha}
\newcommand{\bt}{\beta}
\newcommand{\gmm}{\gamma}
\newcommand{\Gmm}{\Gamma}
\newcommand{\dlt}{\delta}
\newcommand{\Dlt}{\Delta}
\newcommand{\eps}{\epsilon}
\newcommand{\kpp}{\kappa}
\newcommand{\lmb}{\lambda}
\newcommand{\sgm}{\sigma}
\newcommand{\Tht}{\Theta}
\newcommand{\Ups}{\Upsilon}
\newcommand{\bfPhi}{\boldsymbol{\Phi}}
\newcommand{\bbR}{\mathbb R}
\newcommand{\bbZ}{\mathbb Z}
\newcommand{\calC}{\mathcal C}
\newcommand{\calF}{\mathcal F}
\newcommand{\calH}{\mathcal H}
\newcommand{\calL}{\mathcal L}
\newcommand{\calO}{\mathcal O}
\newcommand{\calT}{\mathcal T}
\newcommand{\calU}{\mathcal U}
\title{Gradient blow-up for dispersive and dissipative perturbations \newline of the Burgers equation}
\author{Sung-Jin Oh\thanks{Department of Mathematics, UC Berkeley and School of Mathematics, Korea Institute for Advanced Study. \newline E-mail: sjoh@math.berkeley.edu} \space and Federico Pasqualotto\thanks{Department of Mathematics, UC Berkeley. E-mail: fpasqualotto@math.berkeley.edu}}
\date{}
\begin{document}

\maketitle

\begin{abstract}
We consider a class of dispersive and dissipative perturbations of the inviscid Burgers equation, which includes the fractional KdV equation of order $\alpha$, and the fractal Burgers equation of order $\beta$, where $\alpha, \beta \in [0,1)$, and the Whitham equation. For all $\alpha, \beta \in [0,1)$, we construct solutions whose gradient blows up at a point, and whose amplitude stays bounded, which therefore display a ``shock-like'' singularity. We moreover provide an asymptotic description of the blow-up. To our knowledge, this constitutes the first proof of gradient blow-up for the fKdV equation in the range $\alpha \in [2/3, 1)$, as well as the first description of explicit blow-up dynamics for the fractal Burgers equation in the range $\bt \in [2/3, 1)$. 

Our construction is based on modulation theory, where the well-known smooth self-similar solutions to the inviscid Burgers equation are used as profiles. A somewhat amusing point is that the profiles that are less stable under initial data perturbations (in that the number of unstable directions is larger) are more stable under perturbations of the equation (in that higher order dispersive and/or dissipative terms are allowed) due to their slower rates of concentration. Another innovation of this article, which may be of independent interest, is the development of a streamlined weighted $L^{2}$-based approach (in lieu of the characteristic method) for establishing the sharp spatial behavior of the solution in self-similar variables, which leads to the sharp H\"older regularity of the solution up to the blow-up time.
\end{abstract}

\section{Introduction}

In this article, we construct and describe the dynamics of solutions with smooth decaying initial data that exhibit gradient blow-up (while the solution itself remains bounded) for a wide class of perturbations of the inviscid Burgers equation 
\begin{equation} \label{eq:burgers} \tag{Burgers}
	\rd_{t} u + u \rd_{x} u = 0.
\end{equation}
Examples covered by our result include \emph{the fractional KdV equations}
\begin{equation} \label{eq:fkdv-0} \tag{fKdV}
	\rd_{t} u +u \rd_{x} u + \abs{D_{x}}^{\alp-1} \rd_{x} u = 0 \qquad \hbox{ for any } 0 \leq \alp < 1,
\end{equation}
\emph{Whitham's equation},
\begin{equation}  \label{eq:whitham} \tag{Whitham}
	\rd_{t} u +u \rd_{x} u + \Gmm(D_{x}) \rd_{x} u = 0 \qquad \hbox{ where } \Gmm(\xi) = \sqrt{\frac{\tanh \xi}{\xi}},
\end{equation}
as well as the \emph{fractal Burgers equations} 
\begin{equation} \label{eq:fburgers} \tag{fBurgers}
	\rd_{t} u +u \rd_{x} u + \abs{D_{x}}^{\bt} u = 0 \qquad \hbox{ for any } 0 \leq \bt < 1,
\end{equation}
where $\abs{D_{x}}^{\alp} = (-\lap)^{\frac{\alp}{2}}$.
These equations arise naturally as model problems in the theory of water waves~\cite{NaSh}, which makes the problem of singularity formation a natural one.

Gradient blow-up (also referred to as \emph{shock formation} or \emph{wave breaking} in the contexts of hyperbolic conservation laws or water waves, respectively \cite{Whi}) for~\eqref{eq:fkdv-0} in the range $ 0 \leq \alp < 1$ was conjectured from numerical experiments in~\cite{KlSa} and later in~\cite{HuTa}. While the existence of gradient-blow-up solutions was known for \eqref{eq:fkdv-0} in the range $0 \leq \alp < \frac{2}{3}$ and for \eqref{eq:whitham} (see for instance \cite{HuTa, Hu2017, SautWang, Yang2020}), our result seems to be the first construction of such blow-up solutions in the range $\frac{2}{3} \leq \alp < 1$. We furthermore give a quantitative description of the blow-up dynamics in a stable blow-up regime in the case $0 < \alp < \frac{2}{3}$, which seems to have not appeared in the literature (note that in the case $\alp =0$, the Burgers--Hilbert equation, a precise description of the same blow-up dynamics has already appeared in the recent work of Yang \cite{Yang2020}, which is discussed further below). In all cases, we also observe that there exist smooth compactly supported initial data of either sign (everywhere nonnegative or nonpositive) that give rise to the same blow-up behavior, which disproves (yet suggests a refinement of) a conjecture made by Klein--Saut \cite[Conjecture~3]{KlSa} for \eqref{eq:whitham}; see Remark~\ref{rem:id-sign-0} below.

Concerning \eqref{eq:fburgers}, gradient blow-up was shown in the papers \cite{KiNaSh, AlDrVo, DoDuLi} for all ranges $0 < \bt < 1$. Moreover, in a recent work of Chickering--Moreno-Vasquez--Pandya \cite{CMVP}, which we learned of while preparing our article, a quantitative description of a stable blow-up dynamics analogous to \cite{Yang2020} in the Burgers--Hilbert case was given in the case $0 < \bt < \frac{2}{3}$. Our work provides an alternative, independent description of the same stable blow-up regime, as well as the precise description of some examples of gradient-blow-up solutions in the case $\frac{2}{3} \leq \bt < 1$, which seems new.

Our proof is based on a systematic study of the stability of self-similar blow-up solutions for \eqref{eq:burgers} under perturbations of the equation. As is well-known, \eqref{eq:burgers} admits a two-parameter family of scaling symmetries (corresponding to separate rescaling of time and space), which results in a one-parameter family of self-similar change of variables\footnote{Here, $(s, y, U)$ are the self-similar variables for solutions defined for negatives times that blow up at $(t, x) = (0, 0)$.} 
\begin{equation*}
(t, x, u) \mapsto (s, y, U) = \left( -\log (-t), \tfrac{x}{(-t)^{b}}, \tfrac{(-t)}{(-t)^{b}} u \right),
\end{equation*}
parametrized by $b > 0$. Among these $b$'s, there exist countably many choices that lead to \emph{smooth} self-similar solutions (i.e., $s$-independent solution in the self-similar variables), namely $b = \frac{2k+1}{2k}$ for $k= 1, 2, \ldots$ \cite{EgFo}. In what follows, we will refer the self-similar solutions in the case $k = 1$ as \emph{ground states}, and those in the case $k \geq 2$ as \emph{excited states}. 

A key predecessor of this article is the recent work of Yang \cite{Yang2020} that, based on the modulation-theoretic approach of Buckmaster--Shkoller--Vicol \cite{BuShVi1}, constructed an open set of initial data giving rising to gradient-blow-up solutions to the Burgers--Hilbert equation  (i.e., \eqref{eq:fkdv} with $\alp = 0$) with ground state self-similar solutions to \eqref{eq:burgers} as blow-up profiles (see also the very recent work \cite{CMVP} for \eqref{eq:fburgers} with $0 < \bt < \frac{2}{3}$). In this article, we extend \cite{Yang2020} (and \cite{CMVP}) to more general perturbations of the Burgers equation, while simultaneously allowing for the use of excited states as blow-up profiles\footnote{At this point, we note the interesting work of Collot--Ghoul--Masmoudi \cite{Collot2018}, which considered a two-dimensional partially dissipative perturbation of \eqref{eq:burgers} and constructed blow-up solutions with both ground and excited states as blow-up profiles. We refer to Section~\ref{sub:history} for further discussion.}.

In fact, these two extensions go hand in hand. A somewhat amusing point, made precise in this article, is that {\it higher excited self-similar profiles, which are {\bf less stable under perturbations of the initial data} (i.e., they are stable under higher co-dimensional set of initial data perturbations), are {\bf more stable under perturbations of the equation}, in that higher order dispersive and/or dissipative terms are allowed\footnote{It is important to distinguish the perturbations of the equations discussed here, which are terms of the form $L u$ for some linear operator $L$, with an external forcing term $f$, which is independent of $u$. The effect of an external forcing term with compact support in spacetime should resemble that of a compactly supported initial data perturbation.}.} An explanation behind this phenomenon is as follows. The key factor that determines the stability of a self-similar profile under perturbations of the equation turns out to be its \emph{rate of concentration} (i.e., the exponent $b = \frac{2k+1}{2k}$), and the slower rates of the excited states lead to larger classes of admissible perturbations of the equation. To see this point heuristically, one may simply compare the ``strength'' of each term in the equation on the characteristic time and length scales of the $k$-th Burgers self-similar profile, which are $\sim (-t)$ and $\sim (-t)^{\frac{2k+1}{2k}}$, respectively. For instance, for \eqref{eq:fkdv-0}, compare
\begin{equation*}
	\rd_{t} u \sim (\tau - t)^{-1} u \quad \hbox{ vs. } \quad \abs{D_{x}}^{\alp-1} \rd_{x} u \sim (-t)^{-\frac{2k+1}{2k} \alp} u.
\end{equation*}
(In self-similar variables for {\eqref{eq:burgers}}, the ``strength'' of $u \rd_{x} u$ is the same as that of $\rd_{t} u$.) The ``strength'' of the perturbation $\abs{D_{x}}^{\alp-1} \rd_{x} u$ is weaker than that of $\rd_{t} u$ when $- \frac{2k+1}{2k} \alp < -1$, or equivalently, $\alp < \frac{2k}{2k+1}$; note that this range improves as $k$ increases. Our main theorem demonstrates that under this condition, the Burgers self-similar profile with $b = \frac{2k+1}{2k}$ is stable under passage to \eqref{eq:fkdv-0}, leading to gradient-blow-up solutions to \eqref{eq:fkdv-0} that asymptote to the same Burgers self-similar profile near the singularity. On the other hand, it will become apparent that the instability of the self-similar profile under initial data perturbations does \emph{not} affect its stability under perturbations of the equation.

We remark that the preceding points are in parallel with the recent remarkable works of Merle--Rapha\"el--Rodnianski--Szeftel on singularity formation for the compressible Euler equations, the compressible Navier--Stokes equations and defocusing nonlinear Schr\"odinger equations (NLS). In \cite{MRRS-E}, smooth self-similar profiles for the polytropic compressible Euler equations with characteristic length scales $(-t)^{\frac{1}{r}}$ are constructed for discrete values of $r$. Then these profiles are used to demonstrate singularity formation for second-order dissipative (i.e., compressible Navier--Stokes \cite{MRRS-NS}) and dispersive (i.e., energy-supercritical defocusing NLS after Madelung transform \cite{MRRS-NLS}) perturbations of the Euler equations, where the admissible values of $r$ with respect to each perturbation may be  determined with similar heuristics as above.

Another innovation in this article is the introduction of a robust yet sharp weighted $L^{2}$-based method to establish the optimal spatial growth of the solution in the appropriate self-similar variables, in lieu of the method of characteristics employed in \cite{BuShVi1} and subsequent works. Such information is necessary to establish the sharp H\"older regularity, and in general even the boundedness (due to the lack of the maximum principle), of the solution up to the blow-up time. We refer the reader to Section~\ref{subsec:strategy} below for a short description of this method.

\subsection{First statement of the main result and discussion}
We now precisely state the class of equations studied in this article. For $u : \bbR_{t} \times \bbR_{x} \to \bbR$, consider
\begin{equation}\label{eq:fkdv}
	\rd_{t} u +u \rd_{x} u + \Gmm \rd_{x} u + \Ups u = 0.
\end{equation}
Here, $\Gmm$ and $\Ups$ are Fourier multipliers with symbols $\Gmm(\xi)$ and $\Ups(\xi)$ satisfying the following properties:
\begin{enumerate}
\item $\Gmm(\xi), \Ups(\xi) \in C^{\infty}(\bbR \setminus \set{0})$ are \emph{real-valued} and \emph{even}\footnote{This property is equivalent to the requirement that the Fourier multiplier $\Gmm \rd_{x} + \Ups$ maps real-valued functions to real-valued functions.};
\item $\Gmm(\xi) \xi$ and $\Ups(\xi)$ are symbols of order $\alp$ and $\bt$ with $0 \leq \alp, \bt < 1$, in the sense that for every multi-index $I$, there exists constants $C_{\Gmm, N}, C_{\Ups, N} > 0$ such that for every $\abs{\xi} \geq 1$,
\begin{equation} \label{eq:symb-bnd}
	\abs{\rd_{\xi}^{I} (\Gmm(\xi) \xi)} \leq C_{\Gmm, N} \abs{\xi}^{\alp-\abs{I}}, \qquad
	\abs{\rd_{\xi}^{I} \Ups(\xi)} \leq C_{\Ups, N} \abs{\xi}^{\bt-\abs{I}}.
\end{equation}
On the other hand, we assume that $\Gmm(\xi) \xi$ and $\Ups(\xi)$ are bounded on $\set{\xi \in \bbR : \abs{\xi} \leq 1}$.

\item $\Ups(\xi) \geq 0$ (i.e., $\Ups$ is elliptic).
\end{enumerate}
Clearly, \eqref{eq:fkdv-0}, \eqref{eq:whitham} are examples of \eqref{eq:fkdv} with $\Upsilon = 0$, and \eqref{eq:fburgers} is an example of \eqref{eq:fkdv} with $\Gmm = 0$. The order of $\Gmm$ for \eqref{eq:whitham} is $\alp = \frac{1}{2}$.

By the standard energy method, it can be readily seen that the initial value problem for \eqref{eq:fkdv} is locally well-posed in $H^{s}$ for any $s > \frac{3}{2}$. Our main theorem concerns the formation of singularity for \eqref{eq:fkdv} starting from smooth and well-localized initial data. In simple terms, the statement of our main theorem is as follows.
\begin{theorem}\label{thm:main-simple}
Let $k$ be a positive integer such that $\alpha, \beta < \frac{2k}{2k+1}$. Then there exist smooth initial data $u_0$ for \eqref{eq:fkdv} such that the resulting solution of \eqref{eq:fkdv} blows up in finite time in $\calC^{\sgm}$ for every $\sgm > \frac{1}{2k+1}$, while its $\mathcal{C}^{\frac{1}{2k+1}}$ norm stays bounded until the blow-up time. In the case $k=1$, and for $\alpha <  \frac 23$ and $\beta <  \frac 23$, the blow-up behavior is stable in $H^{5}$. In the case $k > 1$, these initial data form a ``codimension $2k-2$ subset'' of $H^{2k+3}$.
\end{theorem}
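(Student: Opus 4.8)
\emph{Strategy.} The plan is to run a modulation-theoretic self-similar stability argument with the $k$-th Burgers self-similar profile $\br{U}_{k}$ (the ground state for $k=1$, an excited state for $k\ge 2$) as the blow-up profile, in the spirit of Buckmaster--Shkoller--Vicol \cite{BuShVi1} and Yang \cite{Yang2020}, upgraded in two ways that are exactly the novelties advertised in the introduction: excited profiles must be allowed, and the sharp spatial behavior of the solution (needed for the $\calC^{1/(2k+1)}$ bound, and indeed for plain boundedness of $u$, since there is no maximum principle) will be extracted by a weighted $L^{2}$ method rather than by characteristics.

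\textbf{Step 1 (self-similar reduction).} Fix $b = \tfrac{2k+1}{2k}$ and introduce time-dependent modulation parameters associated with the translation, Galilean, and (two-parameter) scaling symmetries of \eqref{eq:burgers} --- a blow-up time $\tau(s)$, a blow-up location $\xi(s)$, and amplitude/scaling parameters $\kappa(s), \mu(s)$. Passing to self-similar variables $(s,y,U)$ adapted to these parameters and writing $U = \br{U}_{k} + V$, equation \eqref{eq:fkdv} becomes
\[
	\rd_{s} V = \calL_{k} V + \calN(V) + \calF_{\mathrm{pert}}[U] + (\text{modulation terms}),
\]
where $\calL_{k} = -\tfrac{1}{2k} + (\tfrac{2k+1}{2k} y + \br{U}_{k})\rd_{y} + \br{U}_{k}'$ is the linearized Burgers operator in self-similar variables, $\calN$ is quadratic, and $\calF_{\mathrm{pert}}$ is the image of the perturbation $\Gmm\rd_{x} + \Ups$ under the change of variables. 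The heuristic in the introduction becomes the rigorous statement that $\calF_{\mathrm{pert}}$ carries a factor $e^{-\delta s}$ with $\delta > 0$ precisely when $\alp, \bt < \tfrac{2k}{2k+1}$; this subcriticality drives the whole argument.

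\textbf{Step 2 (spectral theory of $\calL_{k}$ and modulation).} After a change of variable flattening $\br{U}_{k}$ to a linear profile, $\calL_{k}$ is conjugate to a constant-coefficient transport-plus-multiplication operator whose (generalized) eigenfunctions are quasi-polynomials and whose eigenvalues form a decreasing arithmetic-type sequence with only finitely many in the right half-plane. The neutral and most unstable modes are generated by the symmetries of \eqref{eq:burgers} and are removed by imposing orthogonality conditions that fix the ODEs for $\tau,\xi,\kappa,\mu$. I expect the remaining genuinely unstable space $\calX_{u}$ to have dimension exactly $2k-2$: empty for the ground state ($k=1$), and $\calX_{u}\cong\bbR^{2k-2}$ for $k\ge 2$. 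Decompose $V = V_{s} + V_{u}$ accordingly.

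\textbf{Step 3 (bootstrap and topological argument).} Set up a bootstrap on $[s_{0},\infty)$: (i) a weighted $H^{2k+3}$ energy estimate for $V_{s}$, in which the spectral gap of $\calL_{k}$ on the stable subspace provides coercive damping, $\calN(V)$ is absorbed by smallness, the modulation terms are controlled by their ODEs, and $\calF_{\mathrm{pert}}$ is handled using the $e^{-\delta s}$ gain together with commutator/pseudodifferential estimates to tame its nonlocality against the anisotropic self-similar weights; (ii) the weighted $L^{2}$-based argument producing the \emph{sharp} pointwise spatial bound $V(s,y)\aleq\langle y\rangle^{\frac{1}{2k+1}-\eta}$ with $\eta>0$ room, replacing the method of characteristics; (iii) smallness and convergence of $\tau,\xi,\kappa,\mu$. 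For the $2k-2$ unstable modes, the linearized ODE $\dot a_{j}\approx\lambda_{j}a_{j}$ with $\Re\lambda_{j}>0$ makes $\calX_{u}$ a repeller, so a standard Brouwer/degree shooting argument over the initial values of $V_{u}$ yields at least one choice for which the solution never exits the bootstrap set; hence it exists for all $s$ and converges to $\br{U}_{k}$ modulo modulation, with the sharp spatial profile.

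\textbf{Step 4 (conclusion) and main obstacle.} Undoing the change of variables, the global-in-$s$ bootstrap solution corresponds to a solution of \eqref{eq:fkdv} existing up to a finite time $\tau_{*}$ with $\rd_{x}u\to\infty$ at a point $\xi_{*}$ while $u$ stays bounded; the bound in (ii) gives $\|u\|_{\calC^{1/(2k+1)}}$ bounded up to $\tau_{*}$ and blow-up in $\calC^{\sgm}$ for every $\sgm>\tfrac{1}{2k+1}$. For $k=1$ the absence of unstable modes makes this an open condition on $u_{0}$ in $H^{5}$ (stability); for $k\ge 2$ the $2k-2$ pinned unstable parameters realize the admissible $u_{0}$ as a Lipschitz graph over a codimension-$(2k-2)$ subspace of $H^{2k+3}$. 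The hardest part will be Step 3(i)--(ii): reconciling the \emph{nonlocal} operator $\Gmm\rd_{x}+\Ups$ with the anisotropic self-similar rescaling --- after modulation it is neither a Fourier multiplier nor local in $y$ --- while simultaneously extracting the sharp spatial tail without a maximum principle or characteristics. This is exactly where the threshold $\tfrac{2k}{2k+1}$ and the new weighted $L^{2}$ method must be pushed to their limits; by comparison the spectral analysis of $\calL_{k}$ for general $k$, though intricate, is a finite computation.
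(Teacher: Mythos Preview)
Your strategy is essentially the paper's, and the key structural points---the subcriticality gain $e^{-\delta s}$ under $\alpha,\beta<\tfrac{2k}{2k+1}$, the count of $2k-2$ genuinely unstable directions after modulation, the Brouwer shooting argument, and the weighted $L^2$ route to the sharp $\langle y\rangle^{1/(2k+1)}$ tail---are all correct. A few implementation points differ and deserve comment.

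First, you over-count modulation parameters: once $b=\tfrac{2k+1}{2k}$ is fixed, the spatial scale is locked to $\lambda=(\tau-t)^{b}$ by the self-similar ansatz, so there are only three dynamic modulation parameters $(\tau,\xi,\kappa)$, tied to the conditions $U(s,0)=0$, $U'(s,0)=-1$, $U^{(2k)}(s,0)=0$. Your extra ``$\mu$'' would be redundant.

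Second, the paper does \emph{not} do a full spectral decomposition of $\calL_k$ on a weighted $L^2$ space; instead the unstable modes are simply the Taylor coefficients $w_j=\partial_y^j W(s,0)$ for $j=2,\dots,2k-1$, and the ``spectral gap'' on the stable part is replaced by direct energy estimates on $U$ itself (not the perturbation $W$). This avoids tracking the difference from the profile and is an amusing simplification the paper emphasizes. Your spectral route is viable, but be aware that making the weighted spaces compatible with both the transport structure of $\calL_k$ and the nonlocal $\Gamma,\Upsilon$ is where the work lies.

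Third---and this is the point most worth flagging---the paper does \emph{not} dispense with characteristics entirely. The baseline $L^\infty$ control on $U'$ (the fact that $-1<U'$, which is what makes everything else work) and the coarse $L^\infty$/$L^2$ control on $U''$ are obtained by Lagrangian analysis; the weighted $L^2$ method is deployed specifically to propagate the sharp spatial decay at top order, where characteristics would lose a derivative against the nonlocal term. So your Step~3(ii) is right in spirit, but the paper's hybrid---characteristics for coarse $L^\infty$, weighted $L^2$ for sharp decay---is what actually closes; a purely $L^2$-based scheme would require working with $W=U-\bar U$ rather than $U$, which the paper remarks is possible but does not carry out.
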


For more precise statements regarding the description of the initial data and blow-up dynamics, we refer to Theorem~\ref{thm:main}, Lemma~\ref{lem:id-map} and the ensuing discussion. Note moreover that it would be possible, by a more refined analysis, to show that the ``codimension $2k-2$ subset'' of $H^{2k+3}$ in the above statement constitutes in reality a suitably regular submanifold of initial data. However, this is not carried out in the present work.

\begin{remark}[Stable blow-up regime]
Note that Theorem~\ref{thm:main-simple} applies to \eqref{eq:fkdv-0} for $0 \leq \alp < \frac{2}{3}$, \eqref{eq:whitham} and \eqref{eq:fburgers} for $0 \leq \bt < \frac{2}{3}$ with $k = 1$, and as a result we obtain a blow-up behavior that is stable under initial data perturbations for these equations. On the other hand, in the range $\frac{2}{3} \leq \alp < 1$, the term $\abs{D_{x}}^{\alp-1} \rd_{x}$ cannot be merely treated as a small perturbation for $k = 1$, and we must perturb off of an excited Burgers self-similar profile. Description of a stable (under initial data perturbations) blow-up for \eqref{eq:fkdv-0} for $\frac{2}{3} \leq \alp < 1$ remains an open problem. 
\end{remark}

\begin{remark}[The sign of the initial data] \label{rem:id-sign-0}
In \cite[Conjecture~3]{KlSa}, based on numerical investigation, the following interesting conjecture concerning the blow-up dynamics for \eqref{eq:whitham} and the sign of the initial data was made: 
\begin{quote}
\begin{itemize}
\item solutions to the Whitham equation [...] for negative initial data $u_{0}$ of sufficiently large mass will develop a cusp at $t^{\ast} > t_{c}$ of the form $\abs{x-x^{\ast}}^{1/3}$ [...]
\item solutions to the Whitham equation [...] for positive initial data $u_{0}$ of sufficiently large mass will develop a cusp at $t^{\ast} > t_{c}$ of the form $\abs{x-x^{\ast}}^{1/2}$ [...]
\end{itemize}
\end{quote}
Our construction provides \emph{an open set of initial data of each sign} whose corresponding solutions all have the same blow-up behavior (i.e., $\calC^{\frac{1}{3}}$ remains bounded while $\calC^{\sgm}$ for any $\sgm > \frac{1}{3}$ blows up), thereby providing a counterexample to this conjecture as stated; see Remark~\ref{rem:id-sign} below. Nevertheless, it is possible that the blow-up observed in \cite{KlSa} for positive initial data is another stable blow-up regime, whose blow-up profile must have a positive sign. Verification of this revised picture remains an interesting open problem.
\end{remark}

\subsection{Prior works} \label{sub:history}
The models we consider in the present paper have been considered several times in the literature. We try to give a (non-exhaustive) list of previous results here, dividing them into four main areas.

\begin{itemize}
\item {\bf Water waves}. Some of the above equations, such as \eqref{eq:fkdv-0} and \eqref{eq:whitham} arise as approximated models in the theory of \emph{water waves}. In his 1967 paper Whitham~\cite{whitham} introduced the equation bearing his name, arising as a nonlinear approximation for surface water waves, where the dispersive term satisfies the appropriate dispersion relation. For further discussion of the connections of \eqref{eq:fkdv-0} with the theory of water waves, we refer the reader to the work of Klein--Linares--Pilod--Saut~\cite{klein2018}.

Many authors since the work of Whitham focused on the issue of singularity formation for such models. Wave breaking for \eqref{eq:whitham} was first shown only formally by Seliger~\cite{Sel}, followed by Naumkin--Shishmarev~\cite{NaSh}. The Russian authors were able to extend Seliger's argument to~\eqref{eq:fkdv-0} in the case $0 \leq \alpha < \frac{2}{3}$.  However, it appears that their arguments were not completely rigorous. In follow-up work, A.~Constantin--Escher~\cite{CoEs} made these arguments fully rigorous in the case of a model problem very similar to \eqref{eq:whitham}, requiring however boundedness of the kernel, which does not cover the case of \eqref{eq:whitham} itself.

In Castro--Cordoba--Gancedo~\cite{castro2010}, the authors proved blow-up for~\eqref{eq:fkdv-0} in the full range $0 < \alpha < 1$: their result show that the solution blows up in $\mathcal{C}^{1,\sgm}$, however it does not imply gradient blow-up in this case.

In Klein--Saut~\cite{KlSa}, the authors performed numerical experiments on \eqref{eq:fkdv-0} in the full range $0 < \alpha < 1$, which lead them to conjecture that wave breaking happens in the full range.

In Hur--Tao~\cite{HuTa}, the authors were then able to show wave breaking for~\eqref{eq:fkdv-0} in the case $0 < \alpha < \frac 12$ and for \eqref{eq:whitham}. In later work, Hur was able to extend the blow-up construction to the range $0 < \alpha < \frac 23$, see Hur~\cite{Hu2017}.

More recently, Yang~\cite{Yang2020} extended the shock formation construction to the case $\alpha = 0$, by means of a modulation-theoretic analysis in self-similar variables similar to \cite{BuShVi1} (discussed below), which gives a precise description of singularity formation. Finally, Saut--Wang~\cite{SautWang} have also proved gradient blow-up for \eqref{eq:fkdv-0} in the case $0 \leq \alpha < \frac 3 5$ as well as for \eqref{eq:whitham}.

Concerning model problems, let us mention the work of Klein--Saut--Wang~\cite{KleinSautWang}, where the authors consider the \emph{modified} fKdV equation (which features a cubic nonlinearity) in the range $\alpha \in (0,2)$. In the weakly dispersive range ($\alpha \in (0,1)$), they show the existence of wave breaking solutions. Note also that, by the work of Saut--Wang~\cite{SautWangMod}, modified fKdV admits global solutions for small data when $\alpha$ is in the full range $(0,2)$, $\alpha \neq 1$.

Let us finally briefly mention the case of $\alpha \geq 1$, where the situation seems to be delicate. Conjecturally, when $\alpha \in (1,3/2)$, the picture of ``shock formation'' is expected not to hold (see Klein--Saut~\cite{KlSa}) for the fKdV equation. In a recent work, Rimah was able to establish a precise version of this statement for a paralinearized version of the fKdV equation, thereby excluding wave breaking in the case $\alpha \in (1,2)$ for a paralinearized model problem~\cite{Rimah}.  For modified fKdV with $\alpha \in (1,2)$, Klein--Saut--Wang (again in~\cite{KleinSautWang}), conjecture that, in the focusing case for $\alpha \in (1,2)$, the $L^\infty$ norm of the solution blows up (and no wave breaking occurs).

\item {\bf Weak dissipation}. Weakly dissipative models have also attracted significant attention from the fluid dynamics community. For \eqref{eq:fburgers} in the case $0 \leq \beta < 1$, Kiselev--Nazarov--Shterenberg~\cite{KiNaSh} and independently Alibaud--Droniou--Vovelle~\cite{AlDrVo} as well as Dong--Du--Li~\cite{DoDuLi} were able to show gradient blow-up. Very recently, Chickering--Moreno-Vasquez--Pandya \cite{CMVP} used an approach similar to \cite{BuShVi1, Yang2020} to give a precise description of stable blow-up dynamics in the case $0 \leq \bt < \frac{2}{3}$.

We note that the blow-up solutions to \eqref{eq:fburgers} constructed in this article sharply complement known regularity criteria for \eqref{eq:fburgers}. More precisely, regularity results on linear advection-fractional dissipation equation \cite{CoWu, Sil1, Sil2} (see also \cite{Ibdah} for time-integrated criteria) imply that if $u$ is a solution to \eqref{eq:fburgers} such that $u \in L^{\infty}_{t}([0, \tau_{+}); \calC^{(1-\bt)+\eps})$ for some $\eps > 0$, then $\rd_{x} u$ is H\"older continuous up to time $\tau_{+}$, and therefore the solution extends past $[0, \tau_{+})$. On the other hand, for each $k \geq 1$, Theorem~\ref{thm:main-simple} demonstrates the existence of a blow-up solution $u$ to \eqref{eq:fburgers} with $u \in L^{\infty}_{t} ([0, \tau_{+}); \calC^{\frac{1}{2k+1}})$ for any $\bt < \frac{2k}{2k+1}$ (or more instructively, $\frac{1}{2k+1} < 1-\bt$).

\item {\bf Self-similar constructions in fluids}. 
Our blow-up construction is based on the method of modulation theory in self-similar variables using smooth self-similar solutions to the Burgers equation as blow-up profiles. This method was recently applied to compressible fluid dynamics with great success in a series of work \cite{BuShVi1, BuShVi11, BuShVi2} by Buckmaster--Shkoller--Vicol. In \cite{BuShVi1, BuShVi11}, the authors use a self-similar method to show shock formation for polytropic compressible Euler in two and three space dimensions, giving a precise asymptotic description of shock formation at the point of first singularity, even in the presence of vorticity. They moreover extended their treatment to the non-isentropic case in~\cite{BuShVi2}, showing for the first time generation of vorticity at the shock. 
 
We also mention the work of Buckmaster--Iyer~\cite{BuIy}, in which the authors show formation of unstable shocks for two-dimensional polytropic compressible Euler by using (first) excited states as blow-up profiles, albeit via a different argument (Newton iteration) than what is used in this article (topological argument) to control the unstable directions.

Another instance of this method can be found in the interesting work of Collot--Ghoul--Masmoudi~\cite{Collot2018}, in which the authors construct gradient blow-up for a two-dimensional Burgers equation with transverse viscosity, which is a simplified model for Prandtl's boundary layer equation. In particular, similarly to the present article, \cite{Collot2018} employs weighted $L^{2}$-bounds (albeit nonsharp)  and makes use of all excited states as blow-up profiles via a topological argument. 

Concerning self-similar solutions in fluids, we finally mention the groundbreaking recent work of Elgindi on the blow-up of the 3D incompressible Euler equations in the $\mathcal{C}^{1,\alpha }$ regularity class~\cite{Elgindi}.

\item {\bf Geometric blow-up constructions.} Finally, we also mention the geometric blow-up constructions pioneered by Christodoulou in \cite{christodoulou1}, where shock formation for the compressible irrotational Euler equations is shown. The work of Christodoulou relies on powerful energy estimates, which allow not only to construct the point of first singularity, but also the maximal development of the solution. These ideas enabled Christodolou to later address the restricted shock development problem \cite{christodoulou2}. Moreover, Luk--Speck used geometric ideas to show stability of planar shocks under perturbations with nonzero vorticity~\cite{LuSp}. In the context of the present work, it would be very interesting to extend this type of reasoning to include weakly dispersive and dissipative effects.

\end{itemize}

\subsection{Strategy of the proof} \label{subsec:strategy}

In this section, we outline the strategy of the proof. For the purposes of this section, let us restrict to the case of~\eqref{eq:fkdv-0}. Our argument is based on the underlying analysis of stable (and unstable) blow-up for the Burgers equation. It is well known (see, for instance, \cite{EgFo}) that, for any given $k \in \N$, $k \geq 1$, the Burgers equation admits self-similar solutions exhibiting blow-up in $\calC^{0, \frac{1}{2k+1}+}$, which are each associated to a self-similar blow-up profile and self-similar coordinates.

We start from equation~\eqref{eq:fkdv-0}, which is written in the variables $(t, x, u)$, and we rewrite it in the appropriate self-similar variables arising from Burgers, which we call $(s, y, U)$. For the precise definition of these variables, see Section~\ref{sec:derivation}. We expect the unstable behavior to be encoded by the derivatives of $U$ up to and including order $2k$ at $y = 0$. In view of this observation, we are going to track of the values of $\p_y^j U(s,0)$ throughout the evolution, $j = 0, \ldots, 2k$.

Three of the unstable modes can  be controlled naturally by modulation parameters adapted to the symmetries of the equation: time translation, space translation and Galilean transformation. The modulation conditions will therefore be imposed on $U(s,0)$, $\p_y U(s,0)$ and $\p_y^{2k}U(s,0)$, and the modulation parameters are going to be called $(\tau, \xi, \kappa)$. In the case $k=1$, there are only three unstable directions, which allows us to show stable blow-up.

In the case $k > 1$, the remaining $2k - 2$ unstable directions will have to be controlled by  selecting the initial data appropriately.  For the precise definition of the modulation parameters and to see how they arise from the symmetries of the equation, see Section~\ref{sec:derivation}. 

With this setup, in the self-similar variables, \eqref{eq:fkdv-0} then becomes:
\begin{equation}
\begin{aligned}
	&\rd_{s} U + b y \rd_{y} U + \left(- e^{b s} \xi_{s} + (1+e^{s} \tau_{s}) e^{(b-1) s} \kpp \right) \rd_{y} U - \left(b - 1 \right) U + e^{(b-1)s} \kpp_{s} 
	+ (1+e^{s} \tau_{s}) U \rd_{y} U \\
	& = (1+e^{s} \tau_{s})e^{-s(1- b\alpha)}\p_y |D_y|^{\alpha-1} U. 
\end{aligned}
\end{equation}
Here, we have defined $b = \frac{2k + 1}{2k}$.

For ease of exposition, we are now going to set all the modulation parameters to zero. We obtain the following equation:
\begin{equation}\label{eq:ssfkdvintro}
    \p_s U - (b-1) U + (U+by)\p_y U = e^{-s(1- b\alpha)}\p_y |D_y|^{\alpha-1} U.
\end{equation}

The key observation is that, as long as $1 - b \alpha > 0$, we are able to treat the term on the RHS in a perturbative way due to the exponentially decaying prefactor. Since $b \to 1$ as $k \to \infty$, we are able to treat values of $\alpha$ arbitrarily close to $1$ by choosing $k$ appropriately.

We will set up a bootstrap argument and our goal will be to show that equation~\eqref{eq:ssfkdvintro} admits a global (in self-similar time $s$) solution. The starting point is  that $\p_y U$ can be treated almost independently by a Lagrangian analysis, which yields a uniform bound for $\p_y U$ in $L^\infty$. Through the intermediate step of showing a uniform $L^2$ bound for $\p_y^2 U$, we finally propagate appropriate weighted (in $y$) bounds for top-order derivatives. The weights are adapted to the Lagrangian flow of the equation, and their purpose is to show that the solution displays the correct asymptotic behavior at the time of blow-up. This is carried out in a weighted $L^2$ framework, which has a twofold advantage. First, we show blow-up without the need to consider the difference with the exact self-similar blow-up profile, which is an amusing aspect by itself. Second, this part of the argument is entirely $L^2$ based, which avoids derivative loss at the top order.

The final part of the argument is then devoted to addressing the ``unstable'' part, i.e. the ODE analysis for the modulation parameters and for the unstable derivatives of $U$ at $y = 0$. We introduce a ``trapping condition'' for the unstable coefficients (i.e., a decay condition on derivatives of $U$ at $y = 0$) and show, by way of a shooting argument, that initial data can be selected such that the trapping condition holds for all times.

We are now going to describe the strategy in more detail, again focusing on the case of fKdV.

\begin{enumerate}
\item \emph{Control of $\| \p_y U \|_{L^\infty}$}. We differentiate equation~\eqref{eq:ssfkdvintro} by $\p_y$ and we obtain:
\begin{equation}\label{eq:uprimeintro}
    \p_s U' +U' +(U')^2 +(U+by)\p_y U' =e^{-s(1- b\alpha)}\p_y |D_y|^{\alpha -1} U'.
\end{equation}
Let us for a moment neglect the nonlocal RHS. We rewrite $U'$ in Lagrangian coordinates (we let $\tilde U'$  be $U'$ written in Lagrangian coordinates) and we obtain the following equation for $\tilde U'$:
$$
\p_s \tilde U' = -\tilde U' (\tilde U' +1).
$$
We immediately see that the inequality $-1 < \tilde U < 1$ is preserved by the above Lagrangian ODE, and moreover this bound carries over to the original equation~\eqref{eq:uprimeintro}. This control is going to be the starting point of our analysis (see Lemma~\ref{lem:U1}). 

Building upon this inequality, we then show that, depending on the region considered, $U'(s,y)$ either satisfies a coarse polynomial bound in terms of $|y|$, or it decays exponentially in self-similar time (see Lemma~\ref{lem:U1}, part 4). We will use this, later in the course of the argument, to show dissipativity of the equation in a region where $y$ is large.

\item \emph{Control of $\| \p^{2k+2}_y U \|_{L^2(\R)}$ and $\| \p^{2k+3}_y U \|_{L^2(\R)}$}. These terms are ``top order'' in terms of derivatives. In this part, we shall first accomplish the intermediate task of controlling $\| \p^2_y U \|_{L^2(\R)}$. We first show, by a Lagrangian argument, that $U''$ satisfies a uniform $L^\infty$ bound in the ``close'' region $|y|\leq \frac{1}{4}$. We emphasize that, in this case, it is extremely important that the bound, as well as its region of validity, be independent of the bootstrap parameters. The reason is that we then perform a weighed $L^2$ estimate in the region $R:= \{ y \in \R: \frac{1}{4}\leq |y| \leq y_2\}$, and we wish to control the expression 
$$
\nrmb{\exp\left(- \frac{\lambda}{2} y\right) U''}_{L^2(R)},
$$
where $\lambda$ is a positive real parameter, and $y_2$ is a positive number which we regard as large. We thus require the parameter $\lambda$ to depend on the lower bound for $|y|$ when $y \in R$, and it is therefore crucial that this lower bound be independent of the bootstrap parameters. Finally, we need to show a bound in the ``far away'' region, where $|y| \geq y_2$. We use the smallness of $U'$  to show that the equation for $U''$ has a dissipative character for $|y| \geq y_2$. Combining the three  regions, we obtain a uniform $L^2$ bound for $U''$. This is the content of Lemma~\ref{lem:uii}.

Turning now to the proof of the bounds for $\| \p^{2k+2}_y U \|_{L^2(\R)}$ and $\| \p^{2k+3}_y U \|_{L^2(\R)}$, we recall the familiar observation that, taking $2k+2$ derivatives of equation~\eqref{eq:ssfkdvintro}, the linear term on the LHS becomes dissipative everywhere on $\R$. Combining this fact with interpolation and the control of $\p^2_y U$ in $L^2$, which was obtained as an intermediate step, allows us to deduce a uniform bound for $\| \p^{2k+2}_y U \|_{L^2(\R)}$. Using this control, it is then straightforward to derive a bound for $\| \p^{2k+3}_y U \|_{L^2(\R)}$ (we require control up to this order due to a technical point: we will need to bound $\p_y^{2k+1}U$ at $y = 0$ by Sobolev embedding, in order to control the evolution of $\p_y^{2k+1}U$ at $y = 0$ in the ``unstable'' part of the argument). The high order bounds are obtained in Lemma~\ref{lem:high}.

\item \emph{Control of weighted $L^2$ norms}. Recall that the exact self-similar profile $\bar U$ for the Burgers equation satisfies, for large $|y|$, 
\begin{equation}\label{eq:profiledecay}
|y|^{\frac 1 {2k+1} - j}\lesssim |\p_y^j \bar U| \lesssim |y|^{\frac 1 {2k+1} - j},
\end{equation}
where $j \geq 0$, $j \in \N$. 

In this part of the argument, we wish to propagate an appropriate $L^2$ version of the above polynomial decay bound, using a weighted $L^2$ space, for top-order number of derivatives (i.e., when $j = 2k+3$). This information is needed to show that the blow-up solution is in the correct H\"older regularity class up to the blow-up time.

The weights are constructed such that, in a region of bounded $x$ (i.~e.~a region which corresponds to the image under the Lagrangian flow of a bounded $y$-interval centered at $0$), one obtains the corresponding decay in $y$. Outside this region, the weight is ``tapered'': it is independent of $y$, and grows exponentially in self-similar time at the correct rate.

More precisely, given $n \in \N$ and $L > 0$, we define the semi-norm
\begin{equation}
\nrm{V}_{\dot{\calH}^{n}_{< L}} = \sup_{j \in \bbZ, \, 2^{j} < L} \left( \int_{2^{j-1} < \abs{y} < 2^{j}} (\abs{y}^{n-\frac{1}{2k+1}} \rd_{y}^{n} V)^{2} \frac{\ud y}{y}\right)^{\frac{1}{2}} + L^{n-\frac{1}{2k+1}-\frac{1}{2}} \left( \int_{\abs{y} >\frac{L}{2}} (\rd_{y}^{n} V)^{2} \ud y\right)^{\frac{1}{2}}.
\end{equation} 

Note that it consists of two terms: each expression in the first summand scales according to~\eqref{eq:profiledecay}, and the second summand is obtained by choosing the weight to be the matching constant outside the region $|y| \leq \frac L 2$. In practice, since the Lagrangian flow away from $y = 0$ is well approximated by $y = C e^{bs}$, we are going to set $L = e^{bs}$.

Our goal will then be to show that $\nrm{U}_{\dot{\calH}^{2k+3}_{< e^{bs}}}$ is uniformly bounded in $s$. As a first step, we first show a uniform bound on $\nrm{U}_{\dot{\calH}^{1}_{< e^{bs}}}$.  To obtain it, we multiply the equation for $U'$ by a weight approximately adapted to the Lagrangian flow in a region of large $y$. The growth rate of the weight is also chosen appropriately.

Using this information, the bound for $\nrm{U}_{\dot{\calH}^{2k+3}_{< e^{bs}}}$ is obtained in a similar fashion. In this case, however, one needs to be careful about a potential loss of derivative, as the nonlocal term does not commute with the weight. To deal with this issue, we show a commutator estimate (see Lemma~\ref{lem:mult-w}). The weighted bounds are proved in Lemma~\ref{lem:weighted2}.

\begin{remark}
Note that, if we set $L = \infty$, the above semi-norm is \emph{scale invariant}.
\end{remark}

\item \emph{Topological argument}. Finally, in Section~\ref{sec:unstable}, we employ a topological procedure relying on the instability of the ODE system satisfied by the Taylor coefficients of $U$ at $y = 0$ to close the argument. This procedure will moreover select appropriately the initial data in the unstable case. This type of construction is well known in the dispersive community: see, for instance, the paper by C\^ote, Martel, and Merle \cite{CoMaMe}.

Recall the trapping condition, i.e.~a decay condition for the ``unstable'' Taylor coefficients of $U$ at $y = 0$. We want to show that, upon appropriately choosing initial data, it can be arranged that the solution remains trapped globally in time.

First, in Lemma~\ref{lem:modulation} we show that, under the bootstrap assumptions and assuming the trapping condition, the evolution of the modulation parameters is controlled.

Finally, in Lemma~\ref{lem:outgoing}, we show that the ODE system satisfied by the first $2k+1$ Taylor coefficients of $U$ at $y=0$ displays an unstable character, and we use this fact, combined with a Brouwer-type argument, to show that we can select initial data such that the corresponding solution is trapped for all time. This concludes the argument.
\end{enumerate}

\begin{remark}
Note that parts 1.~and 2.~of the above outline rely on showing $L^\infty$ estimates, which are proved here by means of Lagrangian analysis. This Lagrangian approach seems to be the most efficient way (in terms of degree of technicality) to analyze directly the unknown $U$ (which is what we do in this paper), rather than the difference between $U$ and the exact self-similar profile. However, we believe that, if instead one were to analyze the difference between $U$ and the corresponding self-similar profile, one would be able to carry out the argument without the need for Lagrangian analysis. This approach would make the argument completely $L^2$ based.
\end{remark}

\subsection{Organization of the paper}

In Section~\ref{sec:prel}, we introduce the relevant equations, the self-similar coordinates, the modulation parameters, and the unstable ODE system for Taylor coefficients at $y=0$. In Section~\ref{sec:mainbst}, we give a precise statement of the main theorem (Theorem~\ref{thm:main}), and reduce its proof to establishing two key lemmas, Lemma~\ref{lem:main} (main bootstrap lemma) and Lemma~\ref{lem:top} (shooting lemma for unstable coefficients when $k > 1$). After collecting some useful lemmas for the Fourier multipliers arising in our problem in Section~\ref{sec:fourier}, the following two sections are devoted to the proof of the two key lemmas. In Section~\ref{sec:Uest}, close the bootstrap assumptions on the solution $U$ in appropriate self-similar variables. In Section~\ref{sec:unstable}, we estimate the ODEs for the modulation parameters and stable coefficients, thereby completing the proof of Lemma~\ref{lem:main}. Moreover, in case $k > 1$, we analyze the ODEs for the unstable coefficients and establish Lemma~\ref{lem:top}.

\subsection*{Acknowledgements}
F. Pasqualotto would like to acknowledge Tristan Buckmaster and Javier G\'omez-Serrano for insightful discussions on blow-up constructions. This material is based partially upon work supported by the National Science Foundation under Grant No.~DMS-1928930 while the authors participated in a program hosted by the Mathematical Sciences Research Institute in Berkeley, California, during the Spring 2021 semester. S.-J.~Oh was partially supported by the Samsung Science and Technology Foundation under Project Number SSTF-BA1702-02, a Sloan Research Fellowship and a National Science Foundation CAREER Grant under NSF-DMS-1945615.

\section{Preliminaries}\label{sec:prel}
\subsection{Notation and conventions}
As is usual, we use $C > 0$ to denote a positive constant that may change from line to line. Dependencies of $C$ are expressed by subscripts. Moreover, we use the standard notation $A \aleq B$ for $\abs{A} \leq C B$, and $A \aeq B$ for $A \aleq B$ and $B \aleq A$, and dependencies of the implicit constant $C$ are expressed by subscripts.

Given a symbol $\Gmm(\xi)$, we denote by $\Gmm(D_{x})$ its quantization in $x$, i.e., $\Gmm(D_{x}) V = \calF_{x}^{-1}[\Gmm(\xi)\calF_{x}[V](\xi)]$, where $\calF_{x}$ denotes the Fourier transform in the variable $x$. For each $k \in \bbZ$, we define the \emph{Littlewood--Paley projection} $P_{\leq k}$ to be the Fourier multiplier operator with symbol $P_{\leq k}(\xi)$ defined by $P_{\leq k}(\xi) = P_{\leq 0}(2^{-k} \xi)$, where $P_{\leq 0}$ is a nonnegative smooth function supported in $[-2, 2]$ and equals $1$ on $[-1, 1]$. We also introduce the symbols $P_{k}(\xi) = P_{\leq k}(\xi) - P_{\leq k-1}(\xi)$ and $P_{> k}(\xi) = 1 - P_{\leq k}(\xi)$, as well as the corresponding Fourier multipliers (which are also called Littlewood--Paley projections).

\subsection{Derivation of the equations in self-similar variables}\label{sec:derivation}

Given parameters $\tau, \xi, \kpp \in \bbR$ and $\lmb > 0$ (called \emph{modulation parameters}), consider the change of variables $(t, x, u) \mapsto (s, y, U)$,
\begin{equation} \label{eq:ss-var}
	t = \tau - e^{-s}, \qquad x = \lmb y + \xi, \qquad u(t, x) = \frac{\lmb}{ \tau - t} U + \kpp.
\end{equation}
Note that varying the modulation parameters $\tau$, $\xi$, $\kpp$ and $\lmb$ correspond to applying time translation, space translation, Galilean boost ($u \mapsto u(x-\kpp t) - \kpp$) and spatial scaling to the solution, which are exact symmetries for the (invsicid) Burgers equation. Hence, $(s, y, U)$ are nothing but the rescaled variables for Burgers equation centered at $(t, x, u) = (\tau, \xi, \kpp)$ at spatial scale $\lmb$. 

We let the modulation parameters depend dynamically on $t$, i.e., $\tau = \tau(t)$, $\xi = \xi(t)$, $\kpp = \kpp(t)$ and $\lmb = \lmb(t)$, and consider the same change of variables \eqref{eq:ss-var}. Note that
\begin{align*}
\rd_{t} s &= e^{s} (1 + e^{s} \tau_{s})^{-1}, & 
\rd_{t} y &= - e^{s} (1 + e^{s} \tau_{s})^{-1} \left(\frac{\lmb_{s}}{\lmb} y + \frac{\xi_{s}}{\lmb}\right), &
\rd_{x} s &= 0, &
\rd_{x} y &= \frac{1}{\lmb},
\end{align*}
so that
\begin{align*}
	\rd_{t} u
	&= \lmb e^{2 s} (1+e^{s} \tau_{s})^{-1} \left(\rd_{s} U - \frac{\lmb_{s}}{\lmb} y \rd_{y} U - \frac{\xi_{s}}{\lmb} \rd_{y} U + \left(\frac{\lmb_{s}}{\lmb} + 1 \right) U + \frac{e^{-s}}{\lmb} \kpp_{s} \right), \\
	u \rd_{x} u
	&= \lmb e^{2s} \left(\left(U + \frac{e^{-s}}{\lmb} \kpp \right) \rd_{y} U\right), \\
	\Gmm (D_{x}) \rd_{x} u &= \lmb e^{s} \Gmm(\lmb^{-1} D_{y}) \lmb^{-1} \rd_{y} \left( U + \lmb^{-1} e^{-s} \kpp \right), \\
	\Ups (D_{x}) u &= \lmb e^{s} \Ups(\lmb^{-1} D_{y}) \left(U + \lmb^{-1} e^{-s} \kpp\right).
\end{align*}
Thus, \eqref{eq:fkdv} becomes
\begin{equation*}
\begin{aligned}
	&\rd_{s} U - \frac{\lmb_{s}}{\lmb} y \rd_{y} U + \left(- \frac{\xi_{s}}{\lmb} + (1+e^{s} \tau_{s}) \frac{e^{-s}}{\lmb} \kpp\right) \rd_{y} U + \left(\frac{\lmb_{s}}{\lmb} + 1 \right) U + \frac{e^{-s}}{\lmb} \kpp_{s} \\
	&+ (1+e^{s} \tau_{s}) \left(U \rd_{y} U + e^{-s} \Gmm(\lmb^{-1} D_{y}) \lmb^{-1} \rd_{y} \left(U + \lmb^{-1} e^{-s} \kpp\right) + e^{-s} \Ups(\lmb^{-1} D_{y}) \left(U + \lmb^{-1} e^{-s} \kpp\right)  \right)= 0.
\end{aligned}
\end{equation*}
In what follows, we shall assume the following \emph{self-similarity ansatz} for $\lmb$: Given $k \in \N$, set $b : = \frac{2k + 1}{2k}$ and
\begin{equation} \label{eq:scales}
	\lmb = (\tau - t)^{b} = e^{- b s}.
\end{equation}
Then $\frac{\lmb_{s}}{\lmb} = - b$, and we arrive at
\begin{equation} \label{eq:ssfkdv-pre}
\begin{aligned}
	&\rd_{s} U + b y \rd_{y} U + \left(- e^{b s} \xi_{s} + (1+e^{s} \tau_{s}) e^{(b-1) s} \kpp \right) \rd_{y} U - \left(b - 1 \right) U + e^{(b-1)s} \kpp_{s} \\
	&+ (1+e^{s} \tau_{s}) \left(U \rd_{y} U + e^{-s} \Gmm(e^{b s} D_{y}) e^{b s} \rd_{y} \left(U + e^{(b-1)s} \kpp\right) + e^{-s} \Ups(e^{b s} D_{y}) \left(U + e^{(b-1)s} \kpp\right)  \right)= 0.
\end{aligned}
\end{equation}
If $\Gmm$ and $\Ups$ were zero, and $\tau$, $\xi$, $\kpp$ were constant, then \eqref{eq:ssfkdv-pre} is precisely the self-similar Burgers equation with scales \eqref{eq:scales}. As is well-known, the values $b = \frac{2k+1}{2k}$ with $k = 1, 2, \ldots$ are distinguished by the property that they admit \emph{smooth} steady profiles of the self-similar Burgers equation \cite[Section~11.2]{EgFo}; see also Subsection~\ref{sub:profiledef} below. 

Our intention is to view the linear terms $e^{-s}\Gmm(e^{bs} D_{y}) e^{bs} \rd_{y} U$ and $e^{-s}\Ups(e^{bs} D_{y}) U$ as perturbations. To motivate the way we will decompose these terms, consider the model cases $\Gmm(\xi) = c_{\Gmm} \abs{\xi}^{\alp-1}$ and $\Ups(\xi) = c_{\Ups} \abs{\xi}^{\bt}$ ($c_{\Gmm}, c_{\Ups} \in \bbR$). Then $\Gmm(e^{bs} D_{y}) e^{b s} \rd_{y} = c_{\Gmm} e^{b \alp s} \abs{D_{y}}^{\alp-1} \rd_{y}$ and $\Ups(e^{bs} D_{y}) = c_{\Ups} e^{b \bt s} \abs{D_{y}}^{\bt}$, so that
\begin{equation*}
	e^{-s}\Gmm(e^{bs} D_{y}) e^{bs} \rd_{y} U + e^{-s}\Ups(e^{bs} D_{y}) U
	= c_{\Gmm} e^{-(1- b \alp) s} \abs{D_{y}}^{\alp-1} \rd_{y} U
	+ c_{\Ups} e^{-(1- b \bt) s} \abs{D_{y}}^{\bt}U.
\end{equation*}
In the regime we perform our construction, $\abs{D_{y}}^{\alp-1} \rd_{y} U$ and $\abs{D_{y}}^{\bt}U$ will morally remain bounded in time\footnote{More precisely, we will have boundedness of the gradient of these terms, and controlled growth for the terms themselves.}. Therefore, we may regard these terms as perturbative when $b \alp < 1$ and $b \bt < 1$, in which case the factors $e^{-(1- b \alp) s} $ and $e^{-(1- b \bt) s} $ decay exponentially.

In view of the above discussion, in what follows, we are going to denote
\begin{equation}\label{eq:mudef}
\mu = \min\{1 - b \alpha, 1-b\beta, 1\}.
\end{equation}
Note that, under our assumptions, $\mu > 0$. To simplify our notation, we will now rewrite the operator on the RHS as follows:
\begin{equation}
e^{-s} \Gmm(e^{b s} D_{y}) e^{b s} \rd_{y} U + e^{-s} \Ups(e^{b s} D_{y}) (U + e^{(b-1)s}\kpp )= - e^{-\mu s} \calH \left(U + e^{(b-1)s} \kpp\right) - e^{-s} \calL\left(U + e^{(b-1)s} \kpp\right),
\end{equation}
where
\begin{align}
\calH(V) &= - P_{> 0}(e^{b s} D_{y}) \left( e^{-\max\set{\alp, \bt, 0} b s} \Gmm(e^{bs} D_{y}) e^{bs} \rd_{y} V  + e^{- \max\set{\alp, \bt, 0} b s} \Ups(e^{bs} D_{y}) V \right), \label{eq:H-def} \\
\calL(V) &= - P_{\leq 0}(e^{b s} D_{y}) \left( \Gmm(e^{bs} D_{y}) e^{bs} \rd_{y} V + \Ups(e^{bs} D_{y}) V \right). \label{eq:L-def}
\end{align}

Note that $\calH\left(U + e^{(b-1)s} \kpp\right) = \calH(U)$ thanks to $\chi_{\geq 1}(e^{bs} D_{y})$. Putting everything together, we finally have
\begin{equation}\label{eq:ssfkdv}
\boxed{
\begin{aligned}
	&\rd_{s} U + b y \rd_{y} U + \left(- e^{b s} \xi_{s} + (1+e^{s} \tau_{s}) e^{(b-1) s} \kpp \right) \rd_{y} U - \left(b - 1 \right) U + e^{(b-1)s} \kpp_{s} 
	+ (1+e^{s} \tau_{s}) U \rd_{y} U \\
	& = (1+e^{s} \tau_{s}) \left( e^{-\mu s} \calH (U) + e^{-s} \calL\left(U + e^{(b-1)s} \kpp\right) \right). 
\end{aligned}
}
\end{equation}

\subsection{Definition of the profile}\label{sub:profiledef}

We now solve the steady profile equation for the Burgers problem (i.e., $\Gmm$, $\Ups$ are zero and $\tau$, $\xi$ and $\kpp$ are fixed):
\begin{equation}\label{eq:steady}
(1-b) \mathring{U}+(b  y + \mathring{U})\p_y \mathring{U}= 0.
\end{equation}
We define $\mathring U$ to be a solution to the above equation (an exact self-similar profile) such that $\mathring U(0) = 0$, $\rd_{y} \mathring U(0) = -1$, $\rd_{y}^{2k+1} \mathring U(0) = (2k)!$ and $\p_y^j \mathring U (0) = 0$ for $j = 2, \ldots, 2k$. We can ensure the last condition by simply noticing the the self-similar profile equation is equivalent to:
$$
y = - \mathring U - h_1\mathring U^{2k + 1},
$$
where $h_1 > 0$ is a free parameter. From this implicit definition, we see that the first three nonvanishing Taylor coefficients at $y=0$ are $\rd_{y} \mathring U(0)$, $\rd_{y}^{2k+1} \mathring U(0)$ and $\rd_{y}^{4k+1} \mathring U(0)$. We fix $h_1$ so that $\mathring U$ satisfies $\rd_{y}^{2k+1} \mathring U(0) = (2k)!$; in what follows, we will suppress the dependence of constants on $h_{1}$.

By construction, $\mathring U$ has the following Taylor expansion about $y = 0$:
\begin{equation}\label{eq:tchoice}
\mathring U(y) = - y + \frac{1}{2k+1} y^{2k+1} + O(y^{4k+1})
\end{equation}
and the following expansion about $y = \pm \infty$:
\begin{equation}\label{eq:chi-exp-infty}
\mathring U(y) = \mp h_{1}^{-\frac{1}{2k+1}} \abs{y}^{\frac{1}{2k+1}} \left(1 + O(\abs{y}^{-\frac{2k}{2k+1}}) \right).
\end{equation}
We now define our choice of the profile. Consider a function $\bar \chi: \R \to \R$ which is positive, equal to $1$ on the interval $[-1,1]$, equal to $0$ outside of the interval $[-8,8]$, and such that $\bar \chi' \geq - \frac 14$. We then define the cut-off function $\chi$ to be transported by the linearized flow generated by $\mathring U$:
\begin{equation}\label{eq:chidef}
\p_s \chi + (by + \mathring U) \p_y \chi = 0, \qquad \chi(0, y) = \bar \chi(y).
\end{equation}

Some basic properties of the cut-off function $\chi$ are as follows:
\begin{lemma}[Support property of $\chi(s,y)$]\label{lem:chi}
We have $\supp \chi \subseteq [-C e^{b s}, C e^{b s}]$.
\end{lemma}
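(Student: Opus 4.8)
The plan is to establish the support property by a direct analysis of the transport equation \eqref{eq:chidef}, using the fact that $\chi$ is constant along the characteristics of the vector field $by + \mathring U$. First I would set up the characteristic ODE: for a fixed starting point $y_{0} \in \supp \bar\chi \subseteq [-8, 8]$, let $Y(s) = Y(s; y_{0})$ solve
\begin{equation*}
	\rd_{s} Y = b Y + \mathring U(Y), \qquad Y(0) = y_{0}.
\end{equation*}
Then $\chi(s, Y(s; y_{0})) = \bar\chi(y_{0})$ for all $s$, so $\supp \chi(s, \cdot)$ is exactly the image of $[-8, 8]$ under the flow map $y_{0} \mapsto Y(s; y_{0})$. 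Hence it suffices to show that $\abs{Y(s; y_{0})} \leq C e^{bs}$ for all $s \geq 0$ and all $\abs{y_{0}} \leq 8$, with $C$ depending only on $k$ (and the suppressed parameter $h_{1}$).

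The key step is the growth bound for $\abs{Y(s)}$, which follows from the sublinear growth of $\mathring U$. From the expansion \eqref{eq:chi-exp-infty} (or directly from the implicit relation $y = - \mathring U - h_{1} \mathring U^{2k+1}$, which gives $\abs{\mathring U(y)} \aleq \abs{y}^{\frac{1}{2k+1}} + \abs{y}$ crudely, and $\abs{\mathring U(y)} \aleq \abs{y}^{\frac{1}{2k+1}}$ for $\abs{y} \geq 1$), we have $\abs{\mathring U(y)} \leq C_{0}(1 + \abs{y})^{\frac{1}{2k+1}}$ for a constant $C_{0} = C_{0}(k)$. Therefore, along a characteristic,
\begin{equation*}
	\rd_{s} \abs{Y} \leq b \abs{Y} + \abs{\mathring U(Y)} \leq b \abs{Y} + C_{0}(1 + \abs{Y})^{\frac{1}{2k+1}} \leq b\abs{Y} + C_{1},
\end{equation*}
valid on the region $\abs{Y} \leq 1$ trivially and, for $\abs{Y} \geq 1$, using $(1+\abs{Y})^{\frac{1}{2k+1}} \leq 2\abs{Y}^{\frac{1}{2k+1}} \leq 2\abs{Y}$ so that actually $\rd_{s}\abs{Y} \leq (b + 2C_{0})\abs{Y}$. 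In either case Grönwall's inequality gives $\abs{Y(s)} \leq (\abs{y_{0}} + 1) e^{(b + 2C_{0}) s}$, which already shows $\supp \chi \subseteq [-Ce^{\beta s}, Ce^{\beta s}]$ for \emph{some} exponent $\beta \geq b$; to get the sharp exponent $b$ one argues more carefully, treating the $\mathring U$ term as genuinely lower order: writing $Z = e^{-bs} Y$, one has $\rd_{s} Z = e^{-bs} \mathring U(e^{bs} Z)$, and since $\abs{\mathring U(e^{bs} Z)} \aleq (e^{bs}\abs{Z})^{\frac{1}{2k+1}}$ once $e^{bs}\abs{Z} \geq 1$, we get $\abs{\rd_{s} Z} \aleq e^{-bs(1 - \frac{1}{2k+1})} \abs{Z}^{\frac{1}{2k+1}}$; since $1 - \frac{1}{2k+1} = \frac{2k}{2k+1} > 0$ the right-hand side is integrable in $s$, so $\abs{Z(s)}$ stays bounded (by a Grönwall/bootstrap argument in $Z$, splitting off the region where $e^{bs}\abs{Z}\leq 1$, which contributes a bounded amount since $Y$ is then $O(1)$), giving $\abs{Y(s)} \leq C e^{bs}$ with $C$ depending only on $k$ and the bound $\abs{y_{0}} \leq 8$.

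The main obstacle I anticipate is the behavior of the characteristics near $y_{0} = 0$, where the linearized vector field $by + \mathring U(y) = (b-1) y + \frac{1}{2k+1} y^{2k+1} + O(y^{4k+1})$ does not vanish to high order but has a nonzero linear part $(b-1)y$ with $b - 1 = \frac{1}{2k} > 0$, so the characteristic starting at $y_{0}$ near $0$ still grows, but only like $e^{(b-1)s}\abs{y_{0}}$ initially — this is consistent with, and dominated by, the $Ce^{bs}$ bound, so it causes no trouble for the \emph{upper} bound on the support. The only genuine care needed is to confirm that the flow map remains well-defined for all $s \geq 0$ (no finite-time blowup of characteristics), which is immediate from the a priori linear-in-$\abs{Y}$ differential inequality above, and that the constant $C$ can be taken uniform over the compact set $\abs{y_{0}} \leq 8$, which follows from continuous dependence. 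Assembling these, $\supp\chi(s,\cdot) = Y(s; [-8,8]) \subseteq [-Ce^{bs}, Ce^{bs}]$, which is the claim.
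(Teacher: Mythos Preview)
Your argument is correct and follows the same basic strategy as the paper --- namely, tracking the characteristics of the transport equation for $\chi$ --- but with two differences worth noting.

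First, the paper only tracks two characteristics $Y_{\pm}$ starting from $\pm 10$ (just outside $\supp\bar\chi$); since the flow of a one-dimensional vector field is order-preserving, the image of $[-8,8]$ stays inside $[Y_{-}(s), Y_{+}(s)]$, so controlling those two trajectories suffices. You instead track all characteristics from $[-8,8]$ and appeal to continuous dependence/compactness, which is fine but slightly less direct.

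Second, and more interestingly, you work harder than necessary for the sharp exponent $b$. The paper's one-line argument records only $|\mathring U(Y_{\pm})| \leq 1 + |Y_{\pm}|$ and then ``integrates''; read literally this gives $e^{(b+1)s}$ growth, so the paper is tacitly using more, namely the sign property $y\,\mathring U(y) \leq 0$ (immediate from $y = -\mathring U - h_{1}\mathring U^{2k+1}$), which yields $\partial_{s} Y_{+} = bY_{+} + \mathring U(Y_{+}) \leq bY_{+}$ and hence $Y_{+}(s) \leq 10\, e^{bs}$ in one step. Your rescaling $Z = e^{-bs}Y$ together with the sublinear bound $|\mathring U(y)| \aleq |y|^{1/(2k+1)}$ also gives the sharp exponent, and has the virtue of not relying on sign information, but it is a longer route to the same conclusion.
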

\begin{proof}[Proof of Lemma~\ref{lem:chi}]
Define the Lagrangian trajectories $Y_{\pm}(s)$ in the following way:
\begin{equation}\label{eq:ypm}
\p_s Y_{\pm}(s) = b Y_{\pm}(s) + \mathring U(Y_{\pm}(s)), \qquad Y_\pm (0) = \pm 10,
\end{equation}
so that $\chi(y) = 0$ for all $y: |y| \geq Y(0)$. The conclusion of the lemma will follow if we can show that $|Y_{\pm}| \leq C e^{bs}$. By the form of $\mathring U$, we have that $|\mathring U(Y_{\pm}(s))| \leq 1 + |Y_{\pm}(s)|$, and the lemma follows by integrating~\eqref{eq:ypm} in $s$.
\end{proof}
By Lemma~\ref{lem:chi} and \eqref{eq:chi-exp-infty}, we have
\begin{equation} \label{eq:profile-Linfty}
	\sup_{y \in \supp \chi(s, \cdot)} \abs{\mathring{U}(y)} \aleq e^{\frac{1}{2k+1} b s} = e^{(b-1) s}.
\end{equation}
We finally define the profile
\begin{equation}\label{eq:profdef}
\bar U(s,y) = \chi(s,y) \mathring U (y).
\end{equation}
This is no longer a time-independent profile. Moreover, the modified profile satisfies the equation
\begin{equation}\label{eq:cutoffp}
\p_s \bar U - (b-1) \bar U + (b y + \bar U) \p_y \bar U = - \mathring{U} (1 - \chi)  \rd_{y} \bar{U}.
\end{equation}

\subsection{Equation for iterated derivatives of \texorpdfstring{$U$}{U}}

We let $U^{(j)} = \p_y^{(j)} U$, with $j \geq 1$. We derive the equation satisfied by $U^{(j)}$. For $j = 1$,
\begin{equation}\label{eq:commutu-1}
\boxed{
\begin{aligned}
	&\rd_{s} U' +U'+\left( (1 + e^{s} \tau_{s}) U + b y - e^{b s} \xi_{s} + (1+e^{s} \tau_{s}) e^{(b-1) s} \kpp \right) \rd_{y} U'
	+ (1 + e^{s} \tau_{s}) (U')^{2} \\
	& = (1+e^{s} \tau_{s}) \left( e^{-\mu s} \calH (U') + e^{-s} \rd_{y} \calL(U + e^{(b-1)s} \kpp) \right),
\end{aligned}
}
\end{equation}

and for $j \geq 2$,
\begin{equation}\label{eq:commutu}
\boxed{
\begin{aligned}
	&\rd_{s} U^{(j)} + \left( (1 + e^{s} \tau_{s}) U + b y - e^{b s} \xi_{s} + (1+e^{s} \tau_{s}) e^{(b-1) s} \kpp \right) U^{(j+1)} \\
	&+ \left( 1+ (j-1) b 
	+ (j+1)(1 + e^{s} \tau_{s}) U' \right) U^{(j)}
	+ (1 + e^{s} \tau_{s}) M^{(j)} \\
	& = (1+e^{s} \tau_{s}) \left( e^{-\mu s} \calH (U^{(j)}) + e^{-s} \rd_{y}^{j} \calL(U + e^{(b-1)s} \kpp) \right). 
\end{aligned}
}
\end{equation}
Here, $M^{(j)} = \p_y^{(j)} (U U') - U U^{(j+1)} - ( j+1) U' U^{(j)}$ for $j \geq 2$.

\subsection{Perturbation equation and commutation}
We now define the perturbation $W = U - \bar U$, and we obtain the following equation for $W$:
\begin{equation} \label{eq:perturb}
\boxed{
\begin{aligned}
	&\rd_{s} W  + (b y + \br{U} + W) \rd_{y} W - \left(b - 1 - \rd_{y} \bar{U} \right) W \\
	&+ \left(- e^{b s} \xi_{s} + (1+e^{s} \tau_{s}) e^{(b-1) s} \kpp \right) \rd_{y} (\bar{U} + W) + e^{(b-1)s} \kpp_{s} + \frac{1}{2} e^{s} \tau_{s} \rd_{y} (\bar{U} + W)^{2} \\
	& = E_{\chi} + (1+e^{s} \tau_{s}) \left( e^{-\mu s} \calH (U) + e^{-s} \calL\left(U + e^{(b-1)s} \kpp\right) \right). 
\end{aligned}}
\end{equation} 
Here, $E_{\chi} = \mathring{U} (1 - \chi) \rd_{y} \bar{U}$.

\begin{remark}
Note that the error term $E_{\chi}$ arising from the cutoff is identically zero near $y =0$. 
\end{remark}

 Suppose now that $j \geq 1$. We now commute the above equation~\eqref{eq:perturb} with $\p_y^j$, and obtain:
\begin{equation}\label{eq:commut}
\boxed{
\begin{aligned}
	&\rd_{s} W^{(j)} + \left(b y + \bar{U} + W \right) W^{(j+1)} 
	+ \left( 1+ (j-1) b + (j+1) \bar{U}' \right) W^{(j)}
	+ N(j) + L(j) \\
	&+ \left(- e^{b s} \xi_{s} + (1+e^{s} \tau_{s}) e^{(b-1) s} \kpp \right) (\bar{U}^{(j+1)} + W^{(j+1)}) + \frac{1}{2} e^{s} \tau_{s}  \rd_{y}^{(j+1)}(\bar{U} + W)^{2}  \\
	& = E_{\chi}^{(j)} + (1+e^{s} \tau_{s}) \left( e^{-\mu s} \calH (U^{(j)}) + e^{-s} \rd_{y}^{j} \calL(U + e^{(b-1)s} \kpp) \right). 
\end{aligned}
}
\end{equation}
Here,
\begin{align*}
N{(j)} &= \p_y^{(j)} (W W') - W W^{(j+1)}, \\
L{(j)} & = (\bar U W)^{(j+1)} - \bar U W^{(j+1)} - (j+1) \bar U' W^{(j)}.
\end{align*}

\subsection{Derivation of the modulation equations and unstable ODE system at \texorpdfstring{$y = 0$}{y = 0}}

We now derive the equations satisfied by the derivatives of $W$ at the origin. For each $j \geq 0$, we let  
\begin{align*}
w_{j} := W^{(j)}(s, 0), \qquad
F^{(j)}(s,0) := e^{-\mu s} \calH \left( U^{(j)} \right) + e^{-s} \rd_{y}^{j} \calL \left(U +e^{(b-1)s} \kpp\right),
\end{align*}
For $j \geq 0$, we have
\begin{equation} \label{eq:modul-gen}
\begin{aligned}
	&\rd_{s} w_{j} 
	+ \left( (j-1) (b-1) - 1 \right) w_{j}
	+ w_{0} w_{j+1}
	+ \left. N(j) \right|_{y = 0} + \left. L(j) \right|_{y = 0} \\
	&+ \left(- e^{b s} \xi_{s} + (1+e^{s} \tau_{s}) e^{(b-1) s} \kpp \right) (\bar{U}^{(j+1)}(0) + w_{j+1}) 
	+ \dlt_{0j} e^{(b-1)s} \kpp_{s} 
	+ \frac{1}{2} e^{s} \tau_{s}  \left. \rd_{y}^{(j+1)}(\bar{U} + W)^{2} \right|_{y=0}  \\
	& =(1+e^{s} \tau_{s}) F^{(j)}(s, 0),
\end{aligned}
\end{equation}
where $\dlt_{0j}$ is the Kronecker delta symbol, which equals $1$ when $j = 0$ and vanishes otherwise. 
We also used the following properties of the profile: $\bar U(s,0) = \p^{j}_y \bar U = 0$ for $2\leq j \leq 2k$, and $\p_y \bar U(s,0) = 1$.

We first consider the cases $j = 0, 1$ or $2k$:
\begin{align*}
	&\rd_{s} w_{0} 
	 -b  w_{0} 
	+ w_{0} w_{1} \\
	&+ \left(- e^{b s} \xi_{s} + (1+e^{s} \tau_{s}) e^{(b-1) s} \kpp \right) (-1 + w_{1}) 
	+ e^{(b-1)s} \kpp_{s} 
	+ e^{s} \tau_{s} w_{0} (-1 + w_{1}) \\
	& =(1+e^{s} \tau_{s}) F^{(0)}(s, 0), \\
	&\rd_{s} w_{1} 
	- w_1
	+ w_{0} w_{2}
	+ w_{1}^{2} \\
	&+ \left(- e^{b s} \xi_{s} + (1+e^{s} \tau_{s}) e^{(b-1) s} \kpp \right) w_{2}
	+ e^{s} \tau_{s} \left( w_{2} w_{0} + (-1 + w_{1})^{2} \right) \\
	& =(1+e^{s} \tau_{s}) F^{(1)}(s, 0), \\
	&\rd_{s} w_{2k} 
	+ \left( (2k-1) (b-1) - 1 \right) w_{2k}
	+ w_{0} w_{2k+1}
	+ \left. N(2k) \right|_{y=0} + (2k)! w_{1} \\
	&+ \left(- e^{b s} \xi_{s} + (1+e^{s} \tau_{s}) e^{(b-1) s} \kpp \right) ((2k)! + w_{2k+1}) 
	+ e^{s} \tau_{s} \left((2k+1) (-1 + w_{1}) w_{2k}
+ ((2k)! + w_{2k+1}) w_{0}\right)  \\
	& =(1+e^{s} \tau_{s}) F^{(2k)}(s, 0).
\end{align*}

Observe that the coefficients in front of the $s$-derivatives of the modulation parameters in these three equations are non-degenerate. For this reason, we shall use these equations to determine the dynamic evolution equations for $\kpp$, $\tau$ and $\xi$ by \emph{imposing} the conditions
\begin{equation} \label{eq:orth-cond}
	\boxed{w_{0} = w_{1} = w_{2k} = 0 \quad \hbox{ for all $s$,}}
\end{equation}
which leads to the following equations:
\begin{equation} \label{eq:modul-kpp}
\boxed{
e^{(b-1) s} \kpp_{s} + e^{bs} \xi_{s} - (1+e^{s} \tau_{s}) e^{(b-1) s} \kpp 
= (1+e^{s} \tau_{s}) F^{(0)}(s, 0),
}
\end{equation}
\begin{equation} \label{eq:modul-tau}
\boxed{
e^{s} \tau_{s} 
= (1+e^{s} \tau_{s}) F^{(1)}(s, 0) - w_2(-e^{bs}\xi_s + (1+e^{s}\tau_s)e^{(b-1)s}\kpp ), 
}
\end{equation}
\begin{equation} \label{eq:modul-xi}
\boxed{
\left((2k)!+w_{2k+1}\right) \left( e^{bs} \xi_{s} - (1+e^{s} \tau_{s}) e^{(b-1) s} \kpp \right)
= \left. N(2k) \right|_{y=0} - (1+e^{s} \tau_{s}) F^{(2k)}(s, 0).
}
\end{equation}
\begin{remark}
Note also that, in case $k =1$, the last term in equation~\eqref{eq:modul-tau} vanishes.
\end{remark}

Conversely, if $\kpp_{s}$, $\tau_{s}$ and $\xi_{s}$ are fixed so that \eqref{eq:modul-kpp}--\eqref{eq:modul-xi} are satisfied\footnote{For this purpose, we need to ensure that the coefficient $(2k)! + w_{2k+1}$ is uniformly bounded away from zero; this assertion will be one of the bootstrap assumptions below.} and $w_{0}$, $w_{1}$ and $w_{2k}$ are initially zero, then by \eqref{eq:modul-gen} in the cases $j= 0$, $1$, and $2k$, \eqref{eq:orth-cond} holds.

When $k > 1$, the conditions in \eqref{eq:orth-cond} do \emph{not} fix all values of $w_{j}$ for $j=0, \ldots, 2k$. In such a case, we use the above equation to determine the evolution of $w_{j}$. More precisely, for the remaining indices $j = 2, \ldots, 2k-1$, the ODE for $w_{j}$ is
\begin{equation} \label{eq:modul1}
\boxed{
\begin{aligned}
	&\rd_{s} w_{j} 
	+ \left( (j-1) (b-1) - 1 \right) w_{j}
	+ \left. N(j) \right|_{y = 0} \\
	&+ \left(- e^{b s} \xi_{s} + (1+e^{s} \tau_{s}) e^{(b-1) s} \kpp \right) w_{j+1}
	+ e^{s} \tau_{s} \left(- w_{j} + \left. N(j) \right|_{y=0} \right)  \\
	& =(1+e^{s} \tau_{s}) F^{(j)}(s, 0).
\end{aligned}}
\end{equation}
Here, we used the properties of $\br{U}^{(j)}(0)$ and $w_{0} = w_{1} = w_{2k} = 0$.

We will now rewrite the above system as a system of ODEs.
Introduce the vector $\vec{w}(s) = (w_2(s), \ldots, w_{2k-1}(s))$. Then, $\vec{w}(s)$ satisfies the following system of ODEs:
\begin{equation}\label{eq:odesys}
 \boxed{   \p_s \vec{w}(s) - D \vec{w}(s) + (1+e^{s} \tau_{s}) \mathcal{N}(\vec{w}(s)) = M \vec{w}(s) + \vec{f}(s).}
\end{equation}
Here, $D$ and $M$ are $(2k-1)\times(2k-1)$ matrices given by
\begin{align*}
	D &= \mathrm{diag}\,\left( \lmb_{2}, \ldots, \lmb_{2k-1} \right), \qquad \lmb_{j} = 1 - (j-1) (b-1) = 1 - \frac{j-1}{2k},\\
	M &=e^{s} \tau_{s} I + \left( e^{bs} \xi_{s} - (1+e^{s} \tau_{s}) e^{(b-1) s} \kpp\right) N,
\end{align*}
where $I$ is the identity matrix and $N$ is the nilpotent matrix such that $N_{j (j+1)} = 1$ and $N_{j j'} = 0$ otherwise.

Since $b = \frac{2k+1}{2k}$, each eigenvalue $\lmb_{j}$ of $D$ is strictly positive, so the main linear part $(\rd_{s} - D) \vec{w}(s)$ defines an \emph{unstable} system of ODEs.
In addition, $\mathcal{N}(\vec{w}(s))$ is a vector with quadratic entries as functions of the entries of $\vec{w}$, and $\vec{f}$ is the vector $((1+e^{s} \tau_{s}) F^{(2)}(s,0), \ldots, (1+e^{s} \tau_{s}) F^{(2k-1)}(s, 0))$. 

\section{Precise formulation of the main theorem and reduction to the main bootstrap lemma}\label{sec:mainbst}
\subsection{Initial data in the original variables and the main theorem}
The purpose of this subsection is to give a precise formulation of the main theorem of this paper (Theorem~\ref{thm:main}). We begin by specifying the set of initial data. 

We begin by introducing the following co-dimension $2k+1$ subspace of $H^{2k+3}$:
\begin{equation*}
H^{2k+3}_{(2k)} = \set{W_{0} \in H^{2k+3} : W_{0}(0) = W_{0}'(0) = \cdots = W_{0}^{(2k)}(0) = 0}.
\end{equation*}
We parametrize the initial data in $H^{2k+3}$ that will lead to the desired gradient blow-up solutions with the help of the map $\bfPhi: (0, \infty) \times \bbR \times \bbR \times \bbR^{2k-2} \times H^{2k+3}_{(2k)} \to H^{2k+3}$, which is defined by the formula
\begin{equation} \label{eq:id-map}
\begin{aligned}
	& (\tau_{0}, \xi_{0}, \kpp_{0}, w_{2, 0}, \ldots, w_{2k-1, 0}, W_{0}) \\
	&\mapsto u_{0}(x) = \left. \tau_{0}^{b-1} \left(\chi(-\log \tau_{0}, y) \left( \mathring{U}(y) + \tau_{0}^{1-b} \kpp_{0}\right) + \bar \chi(y) \sum_{j=2}^{2k-1} \frac {w_{j, 0}} {j!} y^{j}  + W_{0}(y) \right) \right|_{y = \tau_{0}^{-b}(x-\xi_{0}) },
\end{aligned}
\end{equation}
where $b = \frac{2k+1}{2k}$, $\mathring{U}$ is the $k$-th smooth self-similar profile for the Burgers equation and $\chi(s, \cdot)$ and $\bar{\chi}(\cdot)$ are as in Subsection~\ref{sub:profiledef}. When $k = 1$, the term $ \bar \chi(y) \sum_{j=2}^{2k-1} \frac {w_{j, 0}} {j!} y^{j}$ is omitted. 

Note that \eqref{eq:id-map} maps the point $(\tau_{0}, \xi_{0}, \kpp_{0}, w_{2, 0}, \ldots, w_{2k-1, 0}, W_{0}) = (\tau_{0}, \xi_{0}, \kpp_{0}, 0, \ldots, 0)$ to the translated and rescaled self-similar Burgers profile whose gradient at $x = \xi_{0}$ is negative and of size $\tau_{0}^{-1}$, i.e., 
\begin{equation*}
(\tau_{0}, \xi_{0}, \kpp_{0}, \ldots, 0) \mapsto u_{0}(x) = \chi(-\log \tau_{0}) \left( \tau_{0}^{b-1} \mathring{U}(\tau_{0}^{-b}(x - \xi_{0})) + \kpp_{0} \right).
\end{equation*}
When $k > 1$, $w_{0, j}$ equals the $j$-th Taylor coefficient of $U(y)$ at $y =0$ in the self-similar variables for $j=2, \ldots, 2k-1$. 

Given $\tau_{0}, \eps_{0} > 0$, we consider the following open subset of $H^{2k+3}_{(2k)}$:
\begin{align}\label{eq:datacalh}
\calO_{\tau_{0}, \eps_{0}} &= \set*{W_{0} \in H^{2k+3}_{(2k)} : \tau_{0}^{\frac{3}{2} b-1} \left( \nrm{W_{0}}_{L^{2}} + \tau_{0}^{-b(2k+3)} \nrm{\rd_{y}^{2k+3} W_{0}}_{L^{2}}\right)  < \eps_{0}}.
\end{align}
When $k > 1$, for $\vec{v}_{0} \in \bbR^{2k-2}$ and $r > 0$, we also introduce the notation
\begin{equation*}
B_{\vec{v}_{0}}(r) = \set{\vec{v} \in \bbR^{2k-2} : \abs{\vec{v} - \vec{v}_{0}} < r}.
\end{equation*}

We are now ready to formulate the main theorem in precise terms.
\begin{theorem} [Precise formulation of the main result] \label{thm:main}
Let $k$ be a positive integer such that $\alp, \bt < \frac{2k}{2k+1}$ and set $b = \frac{2k+1}{2k}$. Then there exist 
$\gmm > 0$ and positive decreasing functions $\tau_{\ast}(\cdot)$, $\eps_{\ast}(\cdot)$ such that the following holds.
Let $\xi_{0} \in \bbR$, $\kpp_{0} \in \bbR$, $\tau_{0} < \tau_{\ast}(\abs{\kpp_{0}})$, $\eps_{0} < \eps_{\ast}(\abs{\kpp_{0}})$ and $W_{0} \in \calO_{\tau_{0}, \eps_{0}}$. When $k = 1$, the initial data $u_{0}(x)$ given by \eqref{eq:id-map} gives rise to a (well-posed) solution to \eqref{eq:fkdv} with initial conditions $u(0, x) = u_{0}(x)$  that blows up in finite time. When $k \geq 2$, there exists $\vec{w}_{0} \in B_{0}(\tau_{0}^{\gmm}) \subseteq \bbR^{2k-2}$ such that the initial data $u_{0}(x)$ given by \eqref{eq:id-map} gives rise to a (well-posed) solution to \eqref{eq:fkdv} with initial conditions $u(0, x) = u_{0}(x)$ that blows up in finite time. In both cases, the following statements hold:
\begin{enumerate}
\item The blow-up time $\tau_{+}$ obeys the bound $\abs{\tau_{+} - \tau_{0}} < C \tau_{0}^{1+\gmm}$.
\item There exist $\xi_{+}$, $\kpp_{+}$ such that
\begin{equation*}
	\abs{\kpp_{+} - \kpp_{0}} \leq C \tau_{0}^{b-1+\gmm}, \quad \abs{\xi_{+} - (\xi_{0} + \tau_{+} \kpp_{0})} \leq C \tau_{0}^{b+\gmm},
\end{equation*}
and such that
\begin{align*}
	\sup_{0 \leq t < \tau_{+}} \nrm{u(t, \cdot)}_{L^{\infty}} + [u(t, \cdot)]_{\calC^{\frac{1}{2k+1}}} 
	\leq C,
\end{align*}
while for every $\sgm \in (\frac{1}{2k+1}, 1)$,
\begin{align*}
C_{\sgm}^{-1} \abs{t - \tau_{+}}^{-\frac{2k+1}{2k}(\sgm-\frac{1}{2k+1})} \leq [u(t, \cdot)]_{\calC^{\sgm}} &\leq C_{\sgm} \abs{t - \tau_{+}}^{-\frac{2k+1}{2k}(\sgm-\frac{1}{2k+1})}
\end{align*}
as $t \to \tau_{+}$.
\end{enumerate}
\end{theorem}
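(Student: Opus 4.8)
The plan is to carry out the whole construction in the self-similar variables $(s,y,U)$ governed by \eqref{eq:ssfkdv}, realizing the blow-up solution as a global-in-$s$ solution whose leading behavior is the $k$-th smooth Burgers profile $\mathring U$. The argument is organized as a single bootstrap on $(W,\tau,\xi,\kpp,\vec w)$ on a maximal interval $[s_{0},s_{1})$, where the bootstrap assumptions quantify: (i) smallness of the perturbation $W=U-\bar U$ in a weighted norm at top order, (ii) a uniform $L^{\infty}$ bound on $\rd_{y}U$ of the form $-1<\rd_{y}U<1$, (iii) boundedness of $\nrm{\rd_{y}^{2k+2}U}_{L^{2}}$ and $\nrm{\rd_{y}^{2k+3}U}_{L^{2}}$, (iv) uniform control of $\nrm{U}_{\dot{\calH}^{2k+3}_{<e^{bs}}}$, and (v) a ``trapping condition'' stating that the unstable Taylor coefficients $\vec w(s)=(w_{2},\dots,w_{2k-1})$ stay exponentially small (for $k>1$). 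First I would feed the given initial data \eqref{eq:id-map} through the change of variables \eqref{eq:ss-var} at $s_{0}=-\log\tau_{0}$ to check that, with the hypotheses $W_{0}\in\calO_{\tau_{0},\eps_{0}}$, $\tau_{0}<\tau_{\ast}(|\kpp_{0}|)$, $\eps_{0}<\eps_{\ast}(|\kpp_{0}|)$, the bootstrap assumptions hold with room to spare at $s=s_{0}$; note that the normalization conditions $w_{0}(s_{0})=w_{1}(s_{0})=w_{2k}(s_{0})=0$ are arranged so that the modulation ODEs \eqref{eq:modul-kpp}--\eqref{eq:modul-xi} keep \eqref{eq:orth-cond} valid for all $s$.

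Next I would close the ``stable'' part of the bootstrap, following the outline in Section~\ref{subsec:strategy}: a Lagrangian analysis of \eqref{eq:uprimeintro} gives the propagation of $-1<\rd_{y}U<1$ together with the coarse polynomial/exponential-decay dichotomy for $\rd_{y}U$ (the promised Lemmas~\ref{lem:U1}); a Lagrangian $L^{\infty}$ bound for $U''$ in $\{|y|\le\frac14\}$ plus a weighted $L^{2}$ estimate in the intermediate annulus and a dissipative estimate in $\{|y|\ge y_{2}\}$ yield the uniform $\nrm{U''}_{L^{2}}$ bound (Lemma~\ref{lem:uii}); commuting \eqref{eq:ssfkdv} with $\rd_{y}^{2k+2}$ makes the linear term dissipative everywhere, so combined with interpolation against the $U''$ bound one gets $\nrm{\rd_{y}^{2k+2}U}_{L^{2}}$ and then $\nrm{\rd_{y}^{2k+3}U}_{L^{2}}$ (Lemma~\ref{lem:high}); finally the weighted bounds on $\nrm{U}_{\dot\calH^{1}_{<e^{bs}}}$ and $\nrm{U}_{\dot\calH^{2k+3}_{<e^{bs}}}$ are propagated using weights adapted to the Lagrangian flow and a commutator estimate (Lemma~\ref{lem:mult-w}) to handle the nonlocal term (Lemma~\ref{lem:weighted2}). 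Throughout, the exponentially decaying prefactors $e^{-\mu s}$ in front of $\calH$, guaranteed by $\mu=\min\{1-b\alpha,1-b\beta,1\}>0$, are what make the dispersive/dissipative terms genuinely perturbative — this is precisely where the hypothesis $\alpha,\beta<\frac{2k}{2k+1}$ enters.

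Then I would treat the ``unstable'' part. Using the just-established bounds, Lemma~\ref{lem:modulation} controls $\tau_{s},\xi_{s},\kpp_{s}$ from \eqref{eq:modul-kpp}--\eqref{eq:modul-xi} (checking in particular that $(2k)!+w_{2k+1}$ stays bounded away from $0$, which needs the Sobolev bound on $\rd_{y}^{2k+1}U$ at $y=0$ coming from the $\nrm{\rd_{y}^{2k+3}U}_{L^{2}}$ control). For $k>1$, the system \eqref{eq:odesys} has linear part $\rd_{s}-D$ with all eigenvalues $\lmb_{j}=1-\frac{j-1}{2k}>0$, i.e.\ purely unstable; a standard Brouwer/shooting argument (cf.\ \cite{CoMaMe}) on the $(2k-2)$-dimensional parameter $\vec w_{0}\in B_{0}(\tau_{0}^{\gmm})$ then produces a choice for which $\vec w(s)$ remains trapped (exponentially small) for all $s$, which is Lemma~\ref{lem:top}; for $k=1$ this step is vacuous and blow-up is stable. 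With the bootstrap closed on $[s_{0},\infty)$, I would translate back: $s\to\infty$ corresponds to $t\to\tau_{+}$ with $\tau_{+}=\lim\tau(s)$, and the bounds $|\tau_{+}-\tau_{0}|<C\tau_{0}^{1+\gmm}$, $|\kpp_{+}-\kpp_{0}|\le C\tau_{0}^{b-1+\gmm}$, $|\xi_{+}-(\xi_{0}+\tau_{+}\kpp_{0})|\le C\tau_{0}^{b+\gmm}$ follow by integrating the modulation ODEs. The Hölder statements are read off from the weighted $\dot\calH^{2k+3}_{<e^{bs}}$ bound: the self-similar scaling $u=\lmb(\tau-t)^{-1}U(s,y)=(\tau-t)^{b-1}U$ with $\lmb=(\tau-t)^{b}$ converts the spatial profile $|U|\sim|y|^{1/(2k+1)}$ on $|y|\lesssim e^{bs}$ into $[u(t,\cdot)]_{\calC^{1/(2k+1)}}\le C$ uniformly, while for $\sgm>\frac{1}{2k+1}$ the $\calC^{\sgm}$ seminorm concentrates near $x=\xi(t)$ and scales like $\lmb^{1/(2k+1)-\sgm}(\tau-t)^{-1}\sim|t-\tau_{+}|^{-b(\sgm-\frac{1}{2k+1})}$, with matching upper and lower bounds coming from the two-sided profile asymptotics \eqref{eq:chi-exp-infty} and the smallness of $W$.

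The main obstacle, I expect, is the weighted $L^{2}$ propagation at top order (step (iv)): the tapered weight $e^{-\lambda y/2}$ does not commute with the nonlocal operator $\calH$, so one must absorb a commutator without losing a derivative, and the weight parameters (growth rate in $s$, the cutoff scale $L=e^{bs}$, the exponent $\lambda$ tied to the \emph{bootstrap-independent} lower bound $|y|\ge\frac14$ in the intermediate region) have to be chosen in a mutually consistent way so that the dissipative gain in $\{|y|\ge y_{2}\}$ genuinely beats the error terms — this is the delicate bookkeeping behind Lemmas~\ref{lem:mult-w} and~\ref{lem:weighted2}. A secondary difficulty is verifying, uniformly in $s$, the nondegeneracy $(2k)!+w_{2k+1}\neq0$ needed to solve the modulation equations, and ensuring the shooting argument's topological degree is not destroyed by the nonlinear and modulation-driven terms $\mathcal N(\vec w)$ and $M\vec w$ in \eqref{eq:odesys}; both are handled by the exponential smallness built into the trapping condition, but they require care in ordering the estimates so that no circularity arises.
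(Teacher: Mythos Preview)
Your proposal is correct and follows essentially the same route as the paper: reduce Theorem~\ref{thm:main} to the main bootstrap lemma (Lemma~\ref{lem:main}) and the shooting lemma (Lemma~\ref{lem:top}), close those via the sequence of Lagrangian and weighted-$L^{2}$ estimates you list, and then translate the global-in-$s$ bounds back through \eqref{eq:ss-var}. One small correction: the lower bound on $[u(t,\cdot)]_{\calC^{\sgm}}$ in the paper comes from the Taylor expansion of $U'$ near $y=0$ (using $U'(s,0)=-1$ and the bootstrap control of higher derivatives there), not from the profile asymptotics \eqref{eq:chi-exp-infty} or global smallness of $W$---the latter is explicitly \emph{not} established here (cf.\ Remark~\ref{rem:sharper-description}).
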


\begin{remark} \label{rem:sharper-description}
For the blow-up solutions in Theorem~\ref{thm:main}, we expect that $\mathring{U}$ to be the blow-up profile, in the sense that $U(s, y)$ in appropriate self-similar variables converges to $\mathring{U}$ as $s \to \infty$ on compact sets of $y$. Such a statement would follow from estimates for $W = U - \chi \mathring{U}$ on top of those proved in this paper, but we have not carried out the details. We refer to \cite{Yang2020} for the proof of this statement in the case of Burgers--Hilbert (i.e., \eqref{eq:fkdv-0} with $\alp = 0$).
\end{remark}

\begin{remark}[Sign of the initial data] \label{rem:id-sign}
There exist smooth compactly supported initial data with both signs (i.e., everywhere nonnegative or nonpositive) that satisfy the hypothesis of Theorem~\ref{thm:main}. Indeed, in \eqref{eq:id-map}, note that $\abs{\mathring{U}} \leq C_{0} \tau_{0}^{1-b}$ on the support of $\chi(-\log \tau_{0}, \cdot)$ (see Lemma~\ref{lem:chi}) for some constant $C_{0} > 0$ independent of $\tau_{0}$. Therefore, if we choose, say, $\abs{\kpp_{0}} > 2 C_{0}$, then the initial profile $\chi(-\log \tau_{0}) ( \tau_{0}^{b-1} \mathring{U}(\tau_{0}^{-b}(x - \xi_{0})) + \kpp_{0} )$ has a definite sign independent of $\tau_{0} > 0$. Moreover, observe that $W_{0} \in \calO_{\tau_{0}, \eps_{0}}$ satisfies the pointwise bound $\abs{W_{0}} \aleq \tau_{0}^{1-b} \eps_{0}$ by the Sobolev embedding. As a consequence, when $k = 1$, the image of \eqref{eq:id-map} with the above choice of $\kpp_{0}$ and $\eps_{0} > 0$ sufficiently small leads to the existence of an open subset of signed initial data in $H^{5}$ that leads to the blow-up behavior described in Theorem~\ref{thm:main}, as alluded to in Remark~\ref{rem:id-sign-0} above. When $k \geq 2$, by taking $\eps_{0}$ and $\tau_{0} > 0$ sufficiently small, we may ensure that the initial data constructed by Theorem~\ref{thm:main} has a definite sign.
\end{remark}

All statements in Theorem~\ref{thm:main-simple} can be read off from Theorem~\ref{thm:main}, with the exception of the stability and the co-dimensionality statements. To formulate these statements, we show that the map \eqref{eq:id-map} is a local homeomorphism.

\begin{lemma} \label{lem:id-map}
For each $\Tht = (\tau_{0}, \xi_{0}, \kpp_{0}, w_{2, 0}, \ldots, w_{2k-1, 0}, W_{0})$ satisfying the hypothesis of Theorem~\ref{thm:main},  the map $\bfPhi$ defined by \eqref{eq:id-map} is a homeomorphism from an open neighborhood of $\Tht$ onto an open neighborhood of  $\bfPhi(\Tht)$ in $H^{2k+3}$.
\end{lemma}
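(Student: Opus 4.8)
The plan is to show that $\bfPhi$ is continuous, injective on a neighborhood of $\Tht$, and open near $\Tht$, from which the homeomorphism property follows by the invariance of domain theorem (the domain and codomain both being, locally, modeled on the same Banach space $\bbR^{2k+1} \times H^{2k+3}_{(2k)} \cong H^{2k+3}$, after checking a dimension/codimension count). Concretely, write $X = (0,\infty) \times \bbR \times \bbR \times \bbR^{2k-2} \times H^{2k+3}_{(2k)}$; the factor $\bbR \times \bbR \times \bbR \times \bbR^{2k-2}$ accounts for exactly the $2k+1$ Taylor coefficients $(W(0), W'(0), \ldots, W^{(2k)}(0))$ that are killed in $H^{2k+3}_{(2k)}$, so $X$ and $H^{2k+3}$ have the ``same size'' and invariance of domain is applicable once we know $\bfPhi$ is a continuous injection.

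First I would establish \emph{continuity} of $\bfPhi$. For fixed $k$, the profile $\mathring U$ is a fixed smooth function, and $\chi(-\log\tau_0, \cdot)$ depends smoothly (in fact real-analytically, via the ODE flow \eqref{eq:chidef}) on $\tau_0$ as a map into any $H^m$ on compact sets; since $\chi(-\log\tau_0,\cdot)$ has support in $[-Ce^{b(-\log\tau_0)}, \ldots] = [-C\tau_0^{-b}, C\tau_0^{-b}]$ by Lemma~\ref{lem:chi}, the rescaling $y = \tau_0^{-b}(x-\xi_0)$ brings everything back to a fixed bounded $x$-window, on which all the building blocks ($\mathring U$, $\bar\chi$, the polynomial $\sum w_{j,0} y^j/j!$, $W_0$) depend continuously on the parameters in $H^{2k+3}_x$. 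The only subtlety is tracking the powers of $\tau_0$ produced both by the explicit prefactor $\tau_0^{b-1}$ and by the Jacobian of the scaling; these are elementary. Thus $\bfPhi: X \to H^{2k+3}$ is continuous (indeed $C^1$, or better).

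Next I would prove \emph{local injectivity} by computing the Fréchet derivative $D\bfPhi(\Tht)$ and showing it is a bounded linear isomorphism, which gives injectivity and openness simultaneously via the inverse function theorem in Banach spaces (making the invariance-of-domain step unnecessary, though it remains an alternative route). The derivative in the $W_0$-direction is, after the (bijective, bicontinuous) affine change of variables $y = \tau_0^{-b}(x-\xi_0)$ together with multiplication by the fixed constant $\tau_0^{b-1}$, essentially the identity on $H^{2k+3}_{(2k)}$, landing in the corresponding codimension-$(2k+1)$ subspace of $H^{2k+3}$. The derivatives in the finite-dimensional directions $(\tau_0,\xi_0,\kpp_0,w_{2,0},\ldots,w_{2k-1,0})$ span a complementary finite-dimensional subspace: differentiating in $w_{j,0}$ produces $\tau_0^{b-1}\bar\chi(y) y^j/j!$ (rescaled), which near $x=\xi_0$ agrees with the monomial $\propto (x-\xi_0)^j$ and hence has nonvanishing $j$-th Taylor coefficient at $x=\xi_0$ while all lower ones vanish; differentiating in $\kpp_0$ produces a function whose value at $x=\xi_0$ is a nonzero multiple of $1$ (a nonzero $0$-th Taylor coefficient); differentiating in $\xi_0$ produces (essentially) $-\rd_x u_0$, whose first Taylor coefficient at $x=\xi_0$ is the nonzero number $-\tau_0^{-1}\rd_y\mathring U(0) = \tau_0^{-1} \ne 0$; and differentiating in $\tau_0$ contributes to the $2k$-th Taylor coefficient through $\rd_y^{2k+1}\mathring U(0) = (2k)! \ne 0$ (after subtracting off the already-accounted lower-order contributions). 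Arranging these in the order $\kpp_0 \leftrightarrow w_0, \xi_0 \leftrightarrow w_1, w_{j,0} \leftrightarrow w_j\ (2 \le j \le 2k-1), \tau_0 \leftrightarrow w_{2k}$ shows the finite-dimensional block of $D\bfPhi(\Tht)$, as a map into $\bbR^{2k+1} \cong H^{2k+3}/H^{2k+3}_{(2k)}$, is triangular with nonzero diagonal, hence invertible; combined with the identity-like action in the $W_0$ direction, $D\bfPhi(\Tht)$ is an isomorphism.

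The main obstacle, and the place where care is genuinely needed, is the \emph{bookkeeping of the $\tau_0$-dependence}: $\tau_0$ enters $\bfPhi$ in four distinct ways at once — through the prefactor $\tau_0^{b-1}$, through the cutoff $\chi(-\log\tau_0,\cdot)$ (hence through its support width, which is why Lemma~\ref{lem:chi} is invoked), through the scaling $y = \tau_0^{-b}(x-\xi_0)$, and through the $\tau_0^{1-b}\kpp_0$ shift inside $\chi(\cdot)(\mathring U + \tau_0^{1-b}\kpp_0)$ — so when differentiating in $\tau_0$ one must verify that the various contributions to the low-order Taylor coefficients at $x=\xi_0$ cancel appropriately and leave a \emph{nonzero} coefficient precisely at order $2k$ (and not, say, accidentally also perturb the already-fixed coefficients in a way that destroys the triangular structure). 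This is a finite, explicit computation using the Taylor expansion \eqref{eq:tchoice} of $\mathring U$ and the fact that $\chi \equiv 1$ near $y=0$ (so $\chi$ and its $\tau_0$-derivative vanish to infinite order away from their transition region, and in particular contribute nothing to Taylor coefficients at $y=0$), but it must be carried out to confirm the nondegeneracy. Everything else — continuity, the identity-like behavior in $W_0$, and the conclusion via the inverse function theorem — is routine.
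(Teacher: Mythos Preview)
Your approach via the inverse function theorem has a genuine gap: the map $\bfPhi$ is \emph{not} Fr\'echet differentiable in $\tau_{0}$ (nor in $\xi_{0}$) as a map into $H^{2k+3}$. The parameter $\tau_{0}$ enters through the rescaling $y = \tau_{0}^{-b}(x-\xi_{0})$, and the map $\lambda \mapsto W_{0}(\lambda^{-1}\,\cdot)$ from $(0,\infty)$ to $H^{2k+3}$ is continuous but not $C^{1}$ for generic $W_{0} \in H^{2k+3}$: the formal $\tau_{0}$-derivative contains a term proportional to $y\,W_{0}'(y)$, which need not lie in $H^{2k+3}$ (indeed, not even in $L^{2}$ without a decay assumption on $W_{0}$). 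The same issue arises with $\xi_{0}$, since translation of an $H^{2k+3}$ function is only $C^{0}$ into $H^{2k+3}$; its formal derivative is $-\tau_{0}^{-b} W_{0}'$, which lies only in $H^{2k+2}$. The paper in fact remarks on this immediately after the lemma: ``$\bfPhi$ does not possess any further regularity in, for instance, $\tau_{0}$, as it acts as a scaling parameter.'' Your fallback via invariance of domain is also problematic, since that theorem does not hold in infinite-dimensional Banach spaces without additional structure.

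The paper circumvents this by constructing the continuous inverse \emph{explicitly} rather than via differentiation. Given $u$ close to $\mathring{u} = \bfPhi(\mathring{\Tht})$ in $H^{2k+3}$, one first recovers $\xi_{0}$ by solving $\rd_{x}^{2k} u(\xi_{0}) = 0$ near $\mathring{\xi}$ (possible by the one-dimensional implicit function theorem, since $\rd_{x}^{2k+1}\mathring{u}(\mathring{\xi}) \neq 0$ and $H^{2k+3} \hookrightarrow C^{2k+2}$); then reads off $\tau_{0} = -(\rd_{x} u(\xi_{0}))^{-1}$, $\kpp_{0} = u(\xi_{0})$, $w_{j,0} = \tau_{0}^{bj}\rd_{x}^{j} u(\xi_{0})$; and finally defines $W_{0}$ as the remainder via \eqref{eq:id-map}. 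Each step is manifestly continuous in $u$, and the construction is easily checked to invert $\bfPhi$. This direct route uses only the $C^{0}$ regularity of $\bfPhi$ and the pointwise Taylor data, avoiding any need to differentiate the scaling.
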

Note that $\bfPhi$ does not possess any further regularity in, for instance, $\tau_{0}$, as it acts as a scaling parameter.
\begin{proof}
Continuity of the map $\bfPhi$ is evident. For every $\mathring{\Tht} \in \tilde{\calO}_{\mathring{\xi}, \mathring{\kpp}, \eps_{0}}$, we may directly construct the continuous inverse in a small neighborhood of $\mathring{u} = \bfPhi(\mathring{\Tht})$ as follows. Let $u$ be sufficiently close to $\mathring{u}$ in the $H^{2k+3}$ topology. Since $\rd_{x}^{2k+1} \mathring{u}(\mathring{\xi}) = (2k)! \tau_{0}^{-2k-2}$ and $\rd_{x}^{2k} \mathring{u}(\mathring{\xi}) = 0$, we may ensure that $\rd_{x}^{2k+1} u(\mathring{\xi})$ is nonzero and $\rd_{x}^{2k} u(\mathring{\xi})$ is small. Hence, we can find a unique point $\xi_{0}$ near $\mathring{\xi}$ such that $\rd_{x}^{2k} u(\xi_{0}) = 0$. Next, we choose $\tau_{0} = - (\rd_{x} u(\xi_{0}))^{-1}$, $\kpp_{0} = u(\xi_{0})$ and $w_{j, 0} = \tau_{0}^{bj} \rd_{x}^{j} u(\xi_{0})$. Finally, define $W_{0}$ from $u$ and the parameters using \eqref{eq:id-map}.
\end{proof}

Lemma~\ref{lem:id-map} shows that the set of initial data for which Theorem~\ref{thm:main} applies in the case $k = 1$ is an open subset of $H^{5}$, which is the precise sense in which the blow-up dynamics described in Theorem~\ref{thm:main} is stable. In the case $k \geq 2$, it establishes the precise sense in which the initial data given by prescribing $\xi_{0} \in \bbR$, $\kpp_{0} \in \bbR$, $\tau_{0} < \tau_{\ast}(\abs{\kpp_{0}})$, $\eps_{0} < \eps_{\ast}(\abs{\kpp_{0}})$ and $W_{0} \in \calO_{\tau_{0}, \eps_{0}}$ but not specifying $\vec{w}_{0} \in B_{0}(\tau_{0}^{\gmm}) \subseteq \bbR^{2k-2}$ is ``co-dimension $2k-2$'' in $H^{2k+2}$, as alluded to in Theorem~\ref{thm:main-simple}.

\begin{remark}
An interesting question, which is not pursued in this article, is the regularity of the co-dimension $2k-2$ set of initial data in $H^{2k+3}$ given by Theorem~\ref{thm:main} and Lemma~\ref{lem:id-map} (e.g., does it form a $C^{1}$ submanifold of $H^{2k+3}$ modelled by $H^{2k+3}$?). Such a result seems to require a careful analysis of the difference of blow-up solutions.
\end{remark}

\subsection{Initial data in self-similar variables} \label{subsec:ss-id}
In this short subsection, we rephrase our ansatz for the initial data in the self-similar variables \eqref{eq:ss-var}, in which most of our analysis will take place. 

We prescribe the initial data at $s = \sgm_{0}$, where conditions on $\sgm_{0}$ will be specified later. In the self-similar variables $(s, y, U)$ given by \eqref{eq:ss-var} with $\tau(\sgm_{0}) = \tau_{0}$, $\xi(\sgm_{0}) = \xi_{0}$ and $\kpp(\sgm_{0}) = \kpp_{0}$, the initial data for $U$ is of the form
\begin{equation}\label{eq:idu}\tag{D1}
U(\sgm_{0}, y) = \chi(\sgm_{0},y) \left(\mathring U(y) + e^{(b-1) \sgm_{0}} \kpp_{0} \right) + \bar \chi(y) \sum_{j=2}^{2k-1} \frac {w_{j, 0}} {j!} y^{j} + W_0(y) -  e^{(b-1) \sgm_{0}} \kpp_{0},
\end{equation}
where the assumptions on $W_{0}$ are as follows: 
\begin{gather}
\rd_{y}^{j} W_{0}(0) = 0 \qquad \hbox{ for all } j = 0, \ldots, 2k, \tag{D2} \label{eq:idw2}\\
\nrm{W_{0}}_{L^{2}} + e^{b(2k+3)\sgm_{0}} \nrm{\rd_{y}^{2k+3} W_{0}}_{L^{2}}  < \eps_{0} e^{(\frac{3}{2} b-1)\sgm_{0}}. \tag{D3} \label{eq:idw3}
\end{gather}
When $k > 1$, the following \emph{smallness conditions} are assumed for the unstable coefficients:
\begin{equation}\label{eq:idw4}\tag{D4}
|w_{j, 0}| \leq e^{-\gmm \sgm_{0}} \qquad \text{for } j = 2, \ldots, 2k-1.
\end{equation}

\subsection{Main bootstrap and shooting lemmas}
In this section, we state two central ingredients of our proof, namely, the main bootstrap lemma in self-similar coordinates (Lemma~\ref{lem:main}) and a shooting lemma for handling the unstable modes when $k > 2$ (Lemma~\ref{lem:top}). 

Recall that $\mu = \min\set{1-b\alp, 1-b \bt, 1}$. Let $\mu_{0}$ be given by\footnote{The reason why separate out the case is $\max\set{\alp, \bt} = \frac{1}{2k+1}$ entirely technical; see Lemma~\ref{lem:forcing} below. We note that $\frac{2k-\frac{3}{2}}{2k}$ can be replaced by any positive number strictly less than $\frac{2k-1}{2k}$.}
\begin{equation} \label{eq:mu0-def}
	\mu_{0} = \begin{cases} 
	\min\set{\mu, \frac{2k-1}{2k}} & \hbox{ when } \max\set{\alp, \bt} \neq \frac{1}{2k+1},\\
	\frac{2k-\frac{3}{2}}{2k} & \hbox{ when } \max \set{\alp, \bt} = \frac{1}{2k+1}.
	\end{cases}
\end{equation}
Fix also a  number $\gmm$ satisfying
\begin{equation} \label{eq:gmm-def}
	0 < \gmm < \mu_{0}.
\end{equation}

To formulate our bootstrap assumptions, we introduce a semi-norm $\dot{\calH}^{n}_{< L}$ ($n$ is a nonnegative integer and $L > 0$) defined by the formula
\begin{equation*}
\nrm{V}_{\dot{\calH}^{n}_{< L}} = \sup_{j \in \bbZ, \, 2^{j} < L} \left( \int_{2^{j-1} < \abs{y} < 2^{j}} (\abs{y}^{n-\frac{1}{2k+1}} \rd_{y}^{n} V)^{2} \frac{\ud y}{y}\right)^{\frac{1}{2}} + L^{n-\frac{1}{2k+1}-\frac{1}{2}} \left( \int_{\abs{y} >\frac{L}{2}} (\rd_{y}^{n} V)^{2} \ud y\right)^{\frac{1}{2}}.
\end{equation*}
A notable feature of this semi-norm is that, in the limit $L =\infty$, it is \emph{invariant} under the self-similar transformation $x = \lmb y$, $u(x) = \lmb^{1-\frac{1}{b}} U(y)$ with $b = \frac{2k+1}{2k}$ for any $\lmb > 0$.

\begin{lemma}[Main bootstrap lemma]\label{lem:main}
There exist increasing functions $\eps_{\ast}^{-1}(\cdot)$, $A(\cdot), y_{0}^{-1}(\cdot)$ and $\sgm_{\ast}(\cdot)$ on $[0, \infty)$, all of which are bounded from below by $1$, such that the following holds. 
Let $\kpp_{0} \in \bbR$, $\sgm_{0} \geq \sgm_{\ast}(\abs{\kpp_{0}})$ and assume that the initial data conditions~\eqref{eq:idu}--\eqref{eq:idw4} are satisfied at $s = \sgm_{0}$ with $\eps_{0} \leq \eps_{\ast}(\abs{\kpp_{0}})$. Suppose that, for some $\sgm_{1} > \sgm_{0}$, $A = A(\abs{\kpp_{0}})$ and $y_{0} = y_{0}(\abs{\kpp_{0}})$, the following estimates are satisfied for $s \in [\sgm_{0}, \sgm_{1}]$:
\begin{align}
&\| \p_y U(s, \cdot)\|_{L^\infty(\R)} \leq 1+2y_0, \label{eq:bst1} \tag{B1}\\
&\| \p_y U(s, \cdot)\|_{L^\infty(\{|y| \geq y_0\})} \leq 1-\frac{y_0^{2k}}{4}, \label{eq:bst3} \tag{B2}\\
&\| \p_y^{2k+3} U(s, \cdot)\|_{L^2(\R)} \leq 2 A, \label{eq:bst2} \tag{B3} \\
&\nrm{U}_{\dot{\calH}^{1}_{<e^{b s}}} \leq 2 A, \label{eq:bst-w-1} \tag{B4} \\
&\nrm{U}_{\dot{\calH}^{2k+3}_{<e^{b s}}} \leq 2 A, \label{eq:bst-w-high} \tag{B5} \\
&|e^{s} \tau_{s}| + \abs{e^{(b-1)s} \kpp_{s}} + \abs{e^{bs} \xi_{s} - (1+e^{s} \tau_{s}) e^{(b-1)s} \kpp} \leq A e^{- \gmm s}, \label{eq:bootstrapmod} \tag{B6}\\
&|W^{(2k+1)}(s,0)| \leq 1. \label{eq:bootstraplow2}\tag{B7}
\end{align}
Assume also that $U(s, 0) = U'(s, 0)+1 = U^{(2k)}(s, 0) = 0$ for all $s \in [\sgm_0, \sgm_1]$.
In case $k > 1$, assume furthermore that $\vec{w}$ satisfies the \emph{trapping condition}
\begin{align} \label{eq:trapped} \tag{T}
&|\vec{w}(s)| \leq e^{-\gmm s}  \hbox{ for } s \in [\sgm_{0}, \sgm_{1}].
\end{align}
Then, stronger estimates actually hold on the interval $s \in [\sgm_{0}, \sgm_{1}]$, as follows:
\begin{align}
&\| \p_y U(s, \cdot)\|_{L^\infty(\R)} \leq 1+y_0, \label{eq:ibst1} \tag{IB1}\\
&\| \p_y U(s, \cdot)\|_{L^\infty(\{|y| \geq y_0\})} \leq 1-\frac{y_0^{2k}}{2}, \label{eq:ibst3} \tag{IB2}\\
&\| \p_y^{2k+3} U(s, \cdot)\|_{L^2(\R)} \leq A, \label{eq:ibst2} \tag{IB3}\\
&\nrm{U}_{\dot{\calH}^{1}_{< e^{bs}}} \leq  A, \label{eq:ibst-w-1} \tag{IB4} \\
&\nrm{U}_{\dot{\calH}^{2k+3}_{< e^{bs}}} \leq A, \label{eq:ibst-w-high} \tag{IB5} \\
&|e^{s} \tau_{s}| + \abs{e^{(b-1)s} \kpp_{s}} + \abs{e^{bs} \xi_{s} - (1+e^{s} \tau_{s}) e^{(b-1)s} \kpp} \leq e^{- \gmm s}, \label{eq:ibootstrapmod} \tag{IB6}\\
&|W^{(2k+1)}(s,0)| \leq \frac{1}{2}. \label{eq:ibootstraplow2}\tag{IB7}
\end{align}
\end{lemma}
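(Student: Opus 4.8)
The plan is to close each of \eqref{eq:ibst1}--\eqref{eq:ibootstraplow2} on $[\sgm_{0},\sgm_{1}]$ by exploiting the single structural feature that the dispersive/dissipative perturbation enters \eqref{eq:ssfkdv} and its differentiated forms \eqref{eq:commutu-1}, \eqref{eq:commutu} only through $e^{-\mu s}\calH(U^{(j)})$ and $e^{-s}\rd_{y}^{j}\calL(U+e^{(b-1)s}\kpp)$, which carry exponentially decaying prefactors; thus the whole scheme is a genuine perturbation of the self-similar Burgers analysis at $b=\tfrac{2k+1}{2k}$. Concretely I would proceed in the order: the $L^{\infty}$ bounds \eqref{eq:ibst1}--\eqref{eq:ibst3} for $\rd_{y}U$; an intermediate $L^{2}$ bound for $\rd_{y}^{2}U$ feeding the top-order bound \eqref{eq:ibst2}; then \eqref{eq:ibootstraplow2} for $W^{(2k+1)}(s,0)$; the weighted bounds \eqref{eq:ibst-w-1}--\eqref{eq:ibst-w-high}; and finally the modulation estimate \eqref{eq:ibootstrapmod}. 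Throughout, the imposed conditions $U(s,0)=U'(s,0)+1=U^{(2k)}(s,0)=0$ and, for $k>1$, the trapping condition \eqref{eq:trapped} are used to control the forcing $F^{(j)}(s,0)$ and the boundary nonlinearities $N(j)|_{y=0}$ in the modulation ODEs \eqref{eq:modul-kpp}--\eqref{eq:modul-xi}. At the end one fixes $\sgm_{\ast}(|\kpp_{0}|)$ large and $\eps_{\ast}(|\kpp_{0}|), y_{0}(|\kpp_{0}|)$ small enough to absorb the implied constants.

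\textbf{The $L^{\infty}$ estimates.} For \eqref{eq:ibst1}--\eqref{eq:ibst3} I would write \eqref{eq:commutu-1} along the characteristics of the transport field $(1+e^{s}\tau_{s})U+by-e^{bs}\xi_{s}+(1+e^{s}\tau_{s})e^{(b-1)s}\kpp$; with $\td{U'}$ denoting $U'$ in those Lagrangian coordinates, the equation reads $\rd_{s}\td{U'}=-(1+e^{s}\tau_{s})\td{U'}(\td{U'}+1)+(\text{forcing})$, for which $(-1,1)$ — and, more precisely, any propagated level set of $U'$ — is attracting. The forcing $(1+e^{s}\tau_{s})(e^{-\mu s}\calH(U')+e^{-s}\rd_{y}\calL(\cdots))$ is $O(Ae^{-\mu s})$ pointwise by the multiplier bounds of Section~\ref{sec:fourier} together with \eqref{eq:bst2}, hence integrable in $s$; a comparison argument then improves $1+2y_{0}$ to $1+y_{0}$ on all of $\R$. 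In the region $\{|y|\ge y_{0}\}$ one tracks its Lagrangian image — it stays outgoing because $\rd_{y}(by+\mathring U)$ is bounded below away from the origin — and uses that $\mathring U'\le 1-\tfrac12 y_{0}^{2k}$ there to improve $1-\tfrac14 y_{0}^{2k}$ to $1-\tfrac12 y_{0}^{2k}$. This is the analogue of Lemma~\ref{lem:U1}.

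\textbf{The $L^{2}$ and at-the-origin estimates.} The top-order bound \eqref{eq:ibst2} I would reach in two stages. First, a uniform $L^{\infty}$ bound for $U''=\rd_{y}^{2}U$ on the \emph{fixed} region $\{|y|\le\tfrac14\}$, again by Lagrangian analysis of \eqref{eq:commutu} with $j=2$ — here it is essential that both the bound and its region of validity be independent of $A,y_{0}$, since the next step is a weighted estimate $\nrm{e^{-\lambda y/2}U''}_{L^{2}(R)}$ on the annulus $R=\{\tfrac14\le|y|\le y_{2}\}$ in which $\lambda$ must be tuned to the lower endpoint $\tfrac14$ for the transport term $by\rd_{y}U''$ to be coercive against the weight. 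For $\{|y|\ge y_{2}\}$, the smallness of $U'$ from \eqref{eq:bst3} makes the coefficient of $U''$ in \eqref{eq:commutu} strictly positive, so $\nrm{U''}_{L^{2}(\{|y|\ge y_{2}\})}$ is damped; patching the three regions gives a uniform $L^{2}$ bound on $U''$ (cf. Lemma~\ref{lem:uii}). Commuting \eqref{eq:ssfkdv} with $\rd_{y}^{2k+2}$, the linear coefficient becomes strictly positive everywhere, so combining the resulting damping with interpolation against the $U''$ bound closes $\nrm{\rd_{y}^{2k+2}U}_{L^{2}}$ and then $\nrm{\rd_{y}^{2k+3}U}_{L^{2}}$, improving $2A$ to $A$ (cf. Lemma~\ref{lem:high}), the perturbative terms again being handled by Section~\ref{sec:fourier}. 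From these top-order $L^{2}$ bounds Sobolev embedding controls $\rd_{y}^{2k+1}U$ and $\rd_{y}^{2k+2}U$ in $L^{\infty}$; inserting this into the ODE for $w_{2k+1}=W^{(2k+1)}(s,0)$ (the $j=2k+1$ case of \eqref{eq:modul-gen}, whose linear coefficient vanishes and whose right-hand side is then integrably small) improves \eqref{eq:bootstraplow2} to \eqref{eq:ibootstraplow2}, provided $\eps_{0}$ is small so that the datum $w_{2k+1}(\sgm_{0})$ is small.

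\textbf{Weighted estimates, modulation, and the main difficulty.} For \eqref{eq:ibst-w-1} I would multiply \eqref{eq:commutu-1} by a weight $\omega_{L}$ adapted to the Lagrangian flow — of the profile-matched size prescribed by $\dot{\calH}^{1}_{<L}$ for $|y|\lesssim L$ and a suitably $s$-growing constant for $|y|\gtrsim L$, with $L=e^{bs}$ so that the field $\approx by$ maps $\{|y|\lesssim L\}$ into itself — choosing the growth rate so that transport plus linear damping is coercive, and estimating the forcing via Section~\ref{sec:fourier}. Running the same scheme at order $2k+3$ gives \eqref{eq:ibst-w-high}, and here lies the \textbf{main obstacle}: the nonlocal operator $\calH$ does not commute with $\omega_{L}$, which threatens a loss of one derivative at top order. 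The resolution — the technical heart of the argument — is a commutator estimate (Lemma~\ref{lem:mult-w}) showing $[\calH,\omega_{L}]$ is lower order on the $\dot{\calH}^{2k+3}_{<e^{bs}}$ scale, so the dangerous term is absorbed into the already-controlled norms; I expect verifying this, with the $L=e^{bs}$-dependent cutoffs built into $\omega_{L}$, to be the most delicate point. Finally, for \eqref{eq:ibootstrapmod} I would use \eqref{eq:modul-kpp}--\eqref{eq:modul-xi}: each of $F^{(0)}(s,0),F^{(1)}(s,0),F^{(2k)}(s,0)$ is $O(Ae^{-\mu_{0}s})$ by Section~\ref{sec:fourier} (the case distinction in \eqref{eq:mu0-def} being exactly what makes the pointwise-at-$y=0$ estimate of $\calH(U^{(j)})$ work), $N(2k)|_{y=0}$ is quadratically small by \eqref{eq:bootstraplow2} and \eqref{eq:trapped}, and $(2k)!+w_{2k+1}$ stays bounded away from zero by \eqref{eq:ibootstraplow2}; since $0<\gmm<\mu_{0}$, this improves the factor $Ae^{-\gmm s}$ to $e^{-\gmm s}$ (cf. Lemma~\ref{lem:modulation}). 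Collecting the six improvements closes the bootstrap.
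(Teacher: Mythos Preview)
Your overall strategy matches the paper's almost step by step, and the order you propose is essentially the one carried out in Sections~\ref{sec:Uest}--\ref{sec:unstable}. However, there is one genuine gap that shows up in two places.

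\textbf{The missing intermediate decay of $U'$.} In your treatment of the $L^{\infty}$ step you only aim for \eqref{eq:ibst1}--\eqref{eq:ibst3}, i.e.\ parts~1--2 of Lemma~\ref{lem:U1}. But the paper's Lemma~\ref{lem:U1} has two further conclusions that are \emph{essential} downstream: part~3, the uniform bound $-U'\le \tfrac{5}{6}$ for $|y|\ge 4$; and part~4, the quantitative decay $|U'(s,y)|\le C\max\{(1+|y|)^{-r},\,Ae^{-rs}\}$ with $C,r$ \emph{independent of $A,y_{0}$}. Neither follows from \eqref{eq:bst1}--\eqref{eq:bst3}; they require a separate Lagrangian argument exploiting the repulsivity of the flow and the attracting structure of the Riccati ODE for $\td{U'}$ away from the origin.

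This omission causes two concrete failures in your outline. First, in the $U''$ step you write that ``the smallness of $U'$ from \eqref{eq:bst3} makes the coefficient of $U''$ strictly positive'' for $|y|\ge y_{2}$. But \eqref{eq:bst3} only gives $|U'|\le 1-\tfrac{1}{4}y_{0}^{2k}\approx 1$, and with that the effective damping coefficient $1+\tfrac{b}{2}+\tfrac{5}{2}U'$ in the $L^{2}$ estimate can be as negative as $\tfrac{b}{2}-\tfrac{3}{2}<0$. The paper instead invokes part~4 of Lemma~\ref{lem:U1} to choose $y_{2}$ so that $|U'|\le\zeta$ is genuinely small on $\{|y|\ge y_{2}\}$. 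Second, in the weighted estimates you claim one can ``choose the growth rate so that transport plus linear damping is coercive''. But the weight in $\dot{\calH}^{1}_{<e^{bs}}$ sits exactly at the critical scaling $\nu=\tfrac{1}{b}-\tfrac{1}{2}$, at which the transport contribution cancels the linear damping identically; there is no coercivity. The mechanism in Lemma~\ref{lem:weighted2} is instead that the weight $\varpi_{j}$ is \emph{exactly} transported by $\rd_{s}+by\rd_{y}$, and the error $\breve{\calT}_{1}(\eta_{\ge i_{0}}\varpi_{j})$ is controlled by $|v-by|\cdot|\varpi_{j}'|+|U'|\cdot|\varpi_{j}|$, whose $s$-integrability comes precisely from the decay bound \eqref{eq:decayup}. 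Without that bound the weighted estimate does not close.

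Once you insert parts~3--4 of Lemma~\ref{lem:U1} into your outline---proved by the same Lagrangian characteristics you already set up---the remaining steps (the three-region $U''$ argument, the top-order energy estimate, the commutator Lemma~\ref{lem:mult-w} for \eqref{eq:ibst-w-high}, and the modulation/$w_{2k+1}$ ODEs) go through as you describe and coincide with the paper's proof.
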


\begin{remark}[On dependencies]\label{rem:deps}
We would like to clarify the order in which the above functions $\eps_*$, $A$, $y_0$, and $\sigma_\ast$ are chosen. We start from $\eps_{\ast}$, which is essentially the size of the initial data. Then we choose $A$, which is the bootstrap parameter (we will eventually choose it to be very large), and, in order to be able to Taylor expand at $y = 0$, we choose $y_0$ to be very small based on $A$. This then forces us to choose $\sigma_{\ast}$ very large depending on $y_0$ and $A$.
\end{remark}

When $k = 1$, then Lemma~\ref{lem:main} is already sufficient to set up a bootstrap argument to show the global existence of $U(s, y)$ for all $s \geq \sgm_{0}$, which is the key step in the proof of Theorem~\ref{thm:main} (see the proof of Theorem~\ref{thm:main} below). 

When $k > 1$, the trapping condition \eqref{eq:trapped} for $\vec{w}$ is \emph{not} improved in general, so we need an extra argument to find a global-in-$s$ solution. For this purpose, we introduce the notion of a \emph{trapped solution} as follows:
\begin{definition} \label{def:trapped}
Let $k > 1$. For $\kpp_{0} \in \bbR$, $\xi_{0} \in \bbR$ and $W_{0}$ satisfying the initial data conditions \eqref{eq:idw2}--\eqref{eq:idw3}, let $A$, $y_{0}$ and $\sgm_{0}$ be determined from Lemma~\ref{lem:main}. We say that a solution $U(s, y)$ with the initial data \eqref{eq:idu} induced by $\sgm_{0}$, $\kpp_{0}$, $\xi_{0}$, $W_{0}$ and $\abs{\vec{w}_{0}} \leq e^{-\gmm \sgm_{0}}$ is \emph{trapped} on an interval $[\sgm_{0}, \sgm_{1}]$ if it satisfies \eqref{eq:bst1}--\eqref{eq:bootstraplow2} and \eqref{eq:trapped} on $[\sgm_{0}, \sgm_{1}]$.
\end{definition}
By Lemma~\ref{lem:main}, it follows that the only way a trapped solution $U(s, y)$ on $[\sgm_{0}, \sgm_{1}]$ can fail to be trapped for $s > \sgm_{1}$ is if \eqref{eq:trapped} is saturated at $s = \sgm_{1}$, i.e., $\abs{\vec{w}_{1}(\sgm_{1})} = e^{-\gmm \sgm_{1}}$. Combining this property with a topological fact (namely, the nonexistence of a continuous retraction of a closed ball to its boundary), we shall prove the existence of a globally trapped solution:

\begin{lemma}[Shooting lemma] \label{lem:top}
Let $W_0$, $\kpp_{0}$ and $\xi_{0}$ be fixed so that the conditions \eqref{eq:idw2}--\eqref{eq:idw3} hold, and let $A$, $y_{0}$, and $\sgm_{0}$ be as in Lemma~\ref{lem:main}. Then there is a vector $\abs{\vec{w}_{0}} < e^{-\gmm \sgm_{0}}$ such that the corresponding solution $U(s, y)$ with initial data at $\sgm_{0}$ induced by $\vec{w}_0$ and $W_0$ remains trapped for all $s \geq \sgm_{0}$. 
\end{lemma}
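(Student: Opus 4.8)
The plan is to run a standard topological shooting argument in the finite-dimensional parameter $\vec{w}_{0} \in \overline{B_{0}(e^{-\gmm\sgm_{0}})} \subseteq \bbR^{2k-2}$, using Lemma~\ref{lem:main} to show that the only obstruction to a trapped solution persisting is the saturation of the trapping condition \eqref{eq:trapped}, and then deriving a contradiction from the resulting retraction. First I would fix $W_{0}$, $\kpp_{0}$, $\xi_{0}$ satisfying \eqref{eq:idw2}--\eqref{eq:idw3}, and the data $A$, $y_{0}$, $\sgm_{0}$ provided by Lemma~\ref{lem:main}. For each $\vec{w}_{0}$ in the closed ball, let $U(s,y;\vec{w}_{0})$ be the solution with initial data \eqref{eq:idu} induced by $\sgm_{0}$, $\kpp_{0}$, $\xi_{0}$, $W_{0}$ and $\vec{w}_{0}$, evolved with the modulation parameters $\tau, \xi, \kpp$ chosen dynamically so that $w_{0} = w_{1} = w_{2k} = 0$ for all $s$ (using \eqref{eq:modul-kpp}--\eqref{eq:modul-xi}). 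Define the \emph{exit time}
\[
s_{*}(\vec{w}_{0}) = \sup\set{s_{1} \geq \sgm_{0} : U(\cdot\,;\vec{w}_{0}) \hbox{ is trapped on } [\sgm_{0}, s_{1}]},
\]
with the convention $s_{*} = \infty$ if the solution is trapped for all $s \geq \sgm_{0}$. The goal is to find $\vec{w}_{0}$ with $s_{*}(\vec{w}_{0}) = \infty$.

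The key structural input is the following consequence of Lemma~\ref{lem:main}: if $U$ is trapped on $[\sgm_{0}, s_{1}]$ with $s_{1} = s_{*}(\vec{w}_{0}) < \infty$, then all of the bootstrap bounds \eqref{eq:bst1}--\eqref{eq:bootstraplow2} are strictly improved to \eqref{eq:ibst1}--\eqref{eq:ibootstraplow2} on that interval, and moreover the assumed orthogonality conditions $w_{0} = w_{1}+1 \cdot 0 = w_{2k} = 0$ hold by construction; hence by continuity none of \eqref{eq:bst1}--\eqref{eq:bootstraplow2} can be the first to fail at $s_{1}$. Therefore the only way trapping can break down at a finite $s_{1}$ is if \eqref{eq:trapped} is saturated, i.e. $\abs{\vec{w}(s_{1};\vec{w}_{0})} = e^{-\gmm s_{1}}$. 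Combined with the \emph{outgoing} (repulsive) character of the linear ODE system \eqref{eq:odesys} — whose linear part $\rd_{s} - D$ has all eigenvalues $\lmb_{j} = 1 - \frac{j-1}{2k} \in (\tfrac{1}{2}, 1]$ strictly positive, while the nonlinearity $\mathcal{N}$ is quadratic, the matrix $M$ is $O(Ae^{-\gmm s})$ by \eqref{eq:bootstrapmod}, and the forcing $\vec{f}$ is controlled (this is Lemma~\ref{lem:outgoing}, referenced in the strategy) — one shows the transversality/strict-crossing property: if $\abs{\vec{w}(s_{1})} = e^{-\gmm s_{1}}$ at an exit time, then $\tfrac{\ud}{\ud s}\big(e^{2\gmm s}\abs{\vec{w}(s)}^{2}\big)\big|_{s = s_{1}} > 0$, because the repulsion rate $\min_{j}\lmb_{j} > \gmm$ (recall \eqref{eq:gmm-def}: $\gmm < \mu_{0} \leq \frac{2k-1}{2k} = \lmb_{2k-1} = \min_{j}\lmb_{j}$) dominates the lower-order terms once $\sgm_{0}$ is large and $e^{-\gmm\sgm_{0}}$ small. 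This strict crossing gives: (i) the exit time $s_{*}(\vec{w}_{0})$ is finite and depends continuously on $\vec{w}_{0}$ on the set where it is finite, and the exit map $\vec{w}_{0} \mapsto e^{\gmm s_{*}(\vec{w}_{0})}\vec{w}(s_{*};\vec{w}_{0}) \in \partial B_{0}(1)$ is continuous there; (ii) if $\abs{\vec{w}_{0}} = e^{-\gmm\sgm_{0}}$ (boundary data) then by the smallness condition \eqref{eq:idw4}, which holds with equality, strict crossing forces $s_{*}(\vec{w}_{0}) = \sgm_{0}$ and the exit map restricted to the boundary sphere is the identity (after the rescaling $\vec{w}_{0} \mapsto e^{\gmm\sgm_{0}}\vec{w}_{0}$).

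Now I would argue by contradiction: suppose $s_{*}(\vec{w}_{0}) < \infty$ for every $\vec{w}_{0}$ in the closed ball. Then the map
\[
\Psi : \overline{B_{0}(1)} \to \partial B_{0}(1), \qquad
\Psi(\vec{v}) = e^{\gmm s_{*}(e^{-\gmm\sgm_{0}}\vec{v})} \, \vec{w}\!\left(s_{*}(e^{-\gmm\sgm_{0}}\vec{v}); e^{-\gmm\sgm_{0}}\vec{v}\right)
\]
is continuous (by (i) and continuous dependence of solutions on $\vec{w}_{0}$ — here one uses local well-posedness of \eqref{eq:fkdv} and the fact that on the trapped region all norms are uniformly controlled, so the flow map is continuous in the relevant topology) and restricts to the identity on $\partial B_{0}(1)$ (by (ii)). Thus $\Psi$ is a continuous retraction of the closed ball $\overline{B_{0}(1)} \subseteq \bbR^{2k-2}$ onto its boundary sphere, contradicting the no-retraction theorem (equivalently, Brouwer's fixed point theorem). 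Hence there exists $\vec{w}_{0}$ in the (open, after a standard argument ruling out the boundary) ball with $s_{*}(\vec{w}_{0}) = \infty$; for this $\vec{w}_{0}$ the solution remains trapped for all $s \geq \sgm_{0}$, as claimed. The same strict-crossing argument shows that such $\vec{w}_{0}$ can be taken with $\abs{\vec{w}_{0}} < e^{-\gmm\sgm_{0}}$, since the boundary sphere consists entirely of immediately-exiting data.

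The main obstacle I expect is establishing the strict transversality at the exit time robustly, i.e. Lemma~\ref{lem:outgoing}: one must show that along the trapped evolution the quadratic nonlinearity $\mathcal{N}(\vec{w})$ (size $\lesssim \abs{\vec{w}}^{2} \leq e^{-2\gmm s} \ll e^{-\gmm s}\abs{\vec{w}}$), the drift matrix $M$ (size $\lesssim A e^{-\gmm s}$, borderline with the gap $\lmb_{j} - \gmm$, so one needs $\sgm_{0}$ large enough depending on $A$ — consistent with Remark~\ref{rem:deps}), and especially the forcing $\vec{f}(s) = ((1+e^{s}\tau_{s})F^{(j)}(s,0))_{j=2}^{2k-1}$ coming from the dispersive/dissipative terms $\calH$ and $\calL$, are all strictly smaller than the repulsion $e^{-\gmm s}\abs{\vec{w}}$ would suggest is needed. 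Controlling $F^{(j)}(s,0)$ requires the Fourier-multiplier estimates (Lemma~\ref{lem:forcing}, referenced in the excerpt) together with the weighted bounds \eqref{eq:ibst-w-1}, \eqref{eq:ibst-w-high} and Sobolev embedding to bound $\rd_{y}^{j}\calL(\cdots)$ and $\calH(U^{(j)})$ at $y = 0$; the decaying prefactors $e^{-\mu s}$, $e^{-s}$ make this work provided $\gmm < \mu_{0}$, which is exactly \eqref{eq:gmm-def}. A secondary technical point is the continuity of the exit map and of $\vec{w}_{0} \mapsto U(\cdot\,;\vec{w}_{0})$: one must check that perturbing $\vec{w}_{0}$ perturbs the (dynamically chosen) modulation parameters continuously and that the solution map for \eqref{eq:fkdv} is continuous in the trapped regime, which follows from local well-posedness in $H^{s}$, $s > 3/2$, and the uniform-in-$s$ a priori bounds on the trapped interval.
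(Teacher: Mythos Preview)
Your proposal is correct and follows essentially the same approach as the paper: contradiction via a continuous retraction of the closed ball onto its boundary sphere, with the exit map built from the first time \eqref{eq:trapped} saturates, and continuity of the exit time coming from the outgoing property (Lemma~\ref{lem:outgoing}). One harmless slip: you write $\lmb_{2k-1} = \tfrac{2k-1}{2k}$, but in fact $\lmb_{2k-1} = 1 - \tfrac{2k-2}{2k} = \tfrac{1}{k}$, so your claimed inequality $\min_{j}\lmb_{j} > \gmm$ need not hold; however the transversality $\tfrac{\ud}{\ud s}\big(e^{2\gmm s}\abs{\vec{w}}^{2}\big) > 0$ only requires $\rd_{s}\abs{\vec{w}}^{2} > -2\gmm\abs{\vec{w}}^{2}$, which is weaker than the paper's $\rd_{s}\abs{\vec{w}}^{2} > 2c_{0}\abs{\vec{w}}^{2}$ with $c_{0} = \tfrac{1}{2}\lmb_{2k-1} > 0$, so the argument goes through unchanged.
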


We are going to prove Lemmas~\ref{lem:main} and \ref{lem:top} in Sections~\ref{sec:Uest} and~\ref{sec:unstable} by breaking the proof into several parts. In the remainder of this section, we show how to establish Theorem~\ref{thm:main} assuming Lemmas~\ref{lem:main} and \ref{lem:top}. 

In addition to Lemmas~\ref{lem:main} and \ref{lem:top}, we need three more ingredients, which will be useful in the rest of the paper. The first ingredient is the following simple pointwise bound from the weighted $L^{2}$-Sobolev norm $\dot{\calH}_{<L}^{n}$:
\begin{lemma} \label{lem:btstrap-Linfty}
For any $1 \leq \ell \leq 2k+2$, we have
\begin{equation*}
\abs{\rd_{y}^{\ell} V(y)} \aleq_{\ell, k} \max \set*{\abs{y}^{-\ell+\frac{1}{2k+1}}, L^{-\ell+\frac{1}{2k+1}}} (\nrm{V}_{\dot{\calH}_{<L}^{1}} + \nrm{V}_{\dot{\calH}_{<L}^{2k+3}}).
\end{equation*}

\end{lemma}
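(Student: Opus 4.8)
The plan is to prove Lemma~\ref{lem:btstrap-Linfty} by a dyadic (Littlewood--Paley-in-physical-space) decomposition of the real line, handling separately the ``interior'' region $\abs{y} \lesssim L$, where the first summand of $\dot{\calH}^{n}_{<L}$ controls things, and the ``exterior'' region $\abs{y} \gtrsim L$, where the second summand takes over. On each dyadic annulus one reduces to a one-dimensional Gagliardo--Nirenberg / Sobolev inequality on an interval, and the claimed pointwise decay rate $\abs{y}^{-\ell + \frac{1}{2k+1}}$ is exactly what comes out of scaling once one records that the seminorm weight on the annulus $\{2^{j-1} < \abs{y} < 2^{j}\}$ is $\abs{y}^{n - \frac{1}{2k+1}} \approx 2^{j(n - \frac{1}{2k+1})}$.

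First I would fix $y \neq 0$ and let $j_{0} \in \bbZ$ be such that $2^{j_{0}-1} < \abs{y} \le 2^{j_{0}}$. Consider first the case $2^{j_{0}} < L$ (interior). On the annulus $A_{j_{0}} = \{2^{j_{0}-1} < \abs{y'} < 2^{j_{0}}\}$ (an interval of length $\approx 2^{j_{0}}$, away from $0$), apply the interpolation inequality
\begin{equation*}
\nrm{\rd_{y}^{\ell} V}_{L^{\infty}(A_{j_{0}})} \aleq_{\ell, k} 2^{-j_{0}(\ell - n)} \nrm{\rd_{y}^{n} V}_{L^{2}(A_{j_{0}})} + 2^{-j_{0}(\ell + \frac{1}{2})} \nrm{V}_{L^{2}(A_{j_{0}})}
\end{equation*}
for $1 \le \ell \le n$, which is just the rescaled Gagliardo--Nirenberg inequality on an interval of length $\sim 2^{j_{0}}$ together with the embedding $H^{\ell} \hookrightarrow L^{\infty}$; here one uses $n = 2k+3 \ge \ell + 1$ so that there are enough derivatives to spare. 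The first term is $\aleq 2^{-j_{0}(\ell - \frac{1}{2k+1})} \bigl( \int_{A_{j_{0}}} (\abs{y'}^{n - \frac{1}{2k+1}} \rd_{y}^{n} V)^{2} \frac{\ud y'}{y'} \bigr)^{1/2} \le 2^{-j_{0}(\ell - \frac{1}{2k+1})} \nrm{V}_{\dot{\calH}^{n}_{<L}}$ after inserting the weight $\abs{y'}^{n - \frac{1}{2k+1}} \approx 2^{j_{0}(n - \frac{1}{2k+1})}$ and $\frac{\ud y'}{y'} \approx 2^{-j_{0}} \ud y'$ (both comparabilities valid since $\abs{y'} \approx 2^{j_{0}}$ on $A_{j_{0}}$). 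For the lower-order term $\nrm{V}_{L^{2}(A_{j_{0}})}$ one does not have direct control from the seminorm, so one instead writes $\rd_{y}^{\ell} V$ via $\rd_{y}^{1} V$: apply the same interpolation but land on $\rd_{y} V$ rather than $V$, i.e. estimate $\nrm{\rd_{y}^{\ell} V}_{L^{\infty}(A_{j_{0}})} \aleq 2^{-j_{0}(\ell - 1)}( 2^{-j_{0}(1 - \frac{1}{2k+1})} \cdot 2^{j_{0}(2k+2 - \frac{1}{2k+1})} \nrm{\rd_{y}^{2k+3} V}\ldots )$ — more cleanly, use that $\ell \ge 1$ so we only ever need derivatives $\rd_{y}^{m} V$ with $1 \le m \le n$, all of which are controlled on $A_{j_{0}}$ by $\dot{\calH}^{1}_{<L}$ and $\dot{\calH}^{n}_{<L}$ via the weights $\abs{y'}^{m - \frac{1}{2k+1}}$ (interpolating between $m = 1$ and $m = n$, which is exactly why both seminorms appear in the statement). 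This yields $\abs{\rd_{y}^{\ell} V(y)} \aleq_{\ell,k} \abs{y}^{-\ell + \frac{1}{2k+1}} (\nrm{V}_{\dot{\calH}^{1}_{<L}} + \nrm{V}_{\dot{\calH}^{n}_{<L}})$ in the interior region.

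For the exterior case $2^{j_{0}} \ge L$ (so $\abs{y} \gtrsim L$), I would work on the single unbounded region $\{\abs{y'} > L/2\}$ and apply the scale-$L$ Gagliardo--Nirenberg inequality there: $\nrm{\rd_{y}^{\ell} V}_{L^{\infty}(\{\abs{y'} > L/2\})} \aleq_{\ell, k} L^{-(\ell - n) } \nrm{\rd_{y}^{n} V}_{L^{2}(\abs{y'} > L/2)} + L^{-(\ell - 1)}\nrm{\rd_{y} V}_{L^{2}(\abs{y'} > L/2)}$, and then bound $\nrm{\rd_{y} V}_{L^{2}(\abs{y'} > L/2)}$ by interpolating against $\nrm{\rd_{y}^{n} V}_{L^{2}(\abs{y'} > L/2)}$ and the corresponding lower-frequency contribution — here one should be a touch careful, as $\dot{\calH}^{n}_{<L}$ only provides $\rd_{y}^{n} V$ in this region directly; the first-derivative norm on $\{\abs{y'}>L/2\}$ is obtained by combining the $\rd_{y}^{n}$ control there with the dyadic pieces of $\dot{\calH}^{1}_{<L}$ near $\abs{y'} \sim L$, using the fundamental theorem of calculus / a telescoping argument to propagate from a good annulus $\abs{y'} \sim L/2$ outward. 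Tracking the powers of $L$, the bound becomes $\abs{\rd_{y}^{\ell} V(y)} \aleq_{\ell,k} L^{-\ell + \frac{1}{2k+1}} (\nrm{V}_{\dot{\calH}^{1}_{<L}} + \nrm{V}_{\dot{\calH}^{n}_{<L}})$ for $\abs{y} \gtrsim L$. Combining the two cases and noting $\max\{\abs{y}^{-\ell + \frac{1}{2k+1}}, L^{-\ell + \frac{1}{2k+1}}\} = \abs{y}^{-\ell + \frac{1}{2k+1}}$ when $\abs{y} \le L$ (since $\ell > \frac{1}{2k+1}$ for $\ell \ge 1$) and $= L^{-\ell + \frac{1}{2k+1}}$ when $\abs{y} \ge L$ gives the stated inequality.

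The main obstacle I anticipate is purely bookkeeping: the seminorm $\dot{\calH}^{n}_{<L}$ only controls the top derivative $\rd_{y}^{n} V$ (plus $\rd_{y} V$ via $\dot{\calH}^{1}_{<L}$), not the intermediate derivatives or $V$ itself, so one must be disciplined about only ever interpolating between orders $1$ and $n$ and never invoking an $L^{2}$ bound on $V$ or on $\rd_{y}^{m} V$ for $m$ outside $[1,n]$; and one must handle the matching at $\abs{y} \sim L$ between the dyadic interior sum and the single exterior integral (the fundamental-theorem-of-calculus step across that threshold). Everything else is the standard rescaled Gagliardo--Nirenberg inequality on intervals, applied scale by scale, with the weights inserted exactly as dictated by the definition of the seminorm.
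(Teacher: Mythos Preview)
Your approach is correct and is exactly what the paper does (the paper's proof is simply ``Sobolev embedding on the unit interval and scaling; we omit the details''). One unnecessary detour in your write-up: in the exterior region $\abs{y}>L/2$ you worry that only $\dot{\calH}^{2k+3}_{<L}$ controls derivatives there and propose a telescoping argument to obtain $\nrm{\rd_{y} V}_{L^{2}(\abs{y}>L/2)}$, but in fact the second summand of the seminorm $\dot{\calH}^{1}_{<L}$ already directly gives $L^{\frac{1}{2}-\frac{1}{2k+1}}\nrm{\rd_{y} V}_{L^{2}(\abs{y}>L/2)}\leq \nrm{V}_{\dot{\calH}^{1}_{<L}}$, so the exterior case is just Gagliardo--Nirenberg on a half-line applied to $g=\rd_{y} V$ with no extra work.
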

\begin{proof}
This lemma follows easily from the Sobolev embedding on the unit interval and scaling; we omit the details.
\end{proof}

The second ingredient is the observation that equation~\eqref{eq:fkdv} admits an $L^2$ bound for $u(t,x)$, which readily translates into an $L^2$ bound for $U(s,y)$ itself. We record this fact in the following lemma.

\begin{lemma}\label{lem:UL2}
Assume that the initial data conditions~\eqref{eq:idu}--\eqref{eq:idw4} are satisfied at $s = \sigma_0$, and $u(t,x)$, $U(s,y)$ are as above. Then, there is $C>0$ such that the following bound holds for $s \in [\sgm_{0}, \sgm_{1}]$:
\begin{equation} \label{eq:U-L2}
	\nrm{U+e^{(b-1)s} \kpp}_{L^{2}_{y}} \leq C e^{(\frac{3}{2} b-1) s} {(1+\kpp_{0})}.
\end{equation}
\end{lemma}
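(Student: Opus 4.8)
The plan is to exploit the fact that, written in the original variables, \eqref{eq:fkdv} \emph{dissipates} the spatial $L^{2}$ norm, and then to transport this bound to the self-similar variables through the change of variables \eqref{eq:ss-var}--\eqref{eq:scales}. The point of phrasing the lemma this way is that the $L^{2}$ identity is transparent in $(t,x)$, whereas carrying it out directly with the self-similar equation \eqref{eq:ssfkdv} (and its modulation terms) would be awkward.

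\emph{Step 1: $L^{2}$ dissipation in $(t,x)$.} Pairing \eqref{eq:fkdv} with $u$ in $L^{2}_{x}$, the nonlinear term contributes $\int u^{2}\rd_{x}u\,\ud x=\tfrac13\int\rd_{x}(u^{3})\,\ud x=0$; the term $\int u\,\Gmm(D_{x})\rd_{x}u\,\ud x$ vanishes because $\Gmm(D_{x})\rd_{x}$ has the purely imaginary symbol $i\xi\Gmm(\xi)$ (here we use that $\Gmm(\xi)$ is real-valued and even, assumption (1)), hence is skew-adjoint on $L^{2}$; and $\int u\,\Ups(D_{x})u\,\ud x=\int\Ups(\xi)\abs{\wh u(\xi)}^{2}\,\ud\xi\ge 0$ by the ellipticity assumption $\Ups(\xi)\ge 0$ (assumption (3)). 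Hence $\tfrac{\ud}{\ud t}\tfrac12\nrm{u(t,\cdot)}_{L^{2}_{x}}^{2}\le 0$. Since the local solution lies in $C_{t}H^{2k+3}$ with $2k+3>\tfrac32$, these manipulations are justified, and we conclude $\nrm{u(t,\cdot)}_{L^{2}_{x}}\le\nrm{u_{0}}_{L^{2}_{x}}$ for all $t$ in the interval of existence, in particular for all $t$ corresponding to $s\in[\sgm_{0},\sgm_{1}]$.

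\emph{Step 2: bounding $\nrm{u_{0}}_{L^{2}_{x}}$.} From \eqref{eq:id-map} we have $u_{0}(x)=\tau_{0}^{b-1}g(\tau_{0}^{-b}(x-\xi_{0}))$, so $\nrm{u_{0}}_{L^{2}_{x}}^{2}=\tau_{0}^{3b-2}\nrm{g}_{L^{2}_{y}}^{2}$, where
\begin{equation*}
g(y)=\chi(-\log\tau_{0},y)\bigl(\mathring U(y)+\tau_{0}^{1-b}\kpp_{0}\bigr)+\bar\chi(y)\sum_{j=2}^{2k-1}\frac{w_{j,0}}{j!}y^{j}+W_{0}(y).
\end{equation*}
By Lemma~\ref{lem:chi}, $\supp\chi(-\log\tau_{0},\cdot)\subseteq[-C\tau_{0}^{-b},C\tau_{0}^{-b}]$, which together with $\abs{\mathring U(y)}\aleq_{k}\brk{y}^{\frac{1}{2k+1}}$ (from \eqref{eq:chi-exp-infty}) gives $\nrm{\chi\mathring U}_{L^{2}_{y}}\aleq_{k}\tau_{0}^{-\frac{b}{2}\frac{2k+3}{2k+1}}$; likewise $\nrm{\chi\,\tau_{0}^{1-b}\kpp_{0}}_{L^{2}_{y}}\aleq\abs{\kpp_{0}}\tau_{0}^{1-\frac32 b}$, $\nrm{\bar\chi\sum w_{j,0}y^{j}/j!}_{L^{2}_{y}}\aleq_{k}1$ by \eqref{eq:idw4} and $\tau_{0}<1$, and $\nrm{W_{0}}_{L^{2}_{y}}\aleq\eps_{0}\tau_{0}^{1-\frac32 b}$ by \eqref{eq:idw3}. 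Since $b=\frac{2k+1}{2k}$ one has $3b-2=\frac{2k+3}{2k}=b\cdot\frac{2k+3}{2k+1}$ and $3b-2=-(2-3b)$, so $\tau_{0}^{3b-2}\cdot\tau_{0}^{-b\frac{2k+3}{2k+1}}=1$ and $\tau_{0}^{3b-2}\cdot\tau_{0}^{2-3b}=1$; all powers of $\tau_{0}$ therefore cancel and, using $\eps_{0}\le\eps_{\ast}$,
\begin{equation*}
\nrm{u_{0}}_{L^{2}_{x}}^{2}\aleq_{k}1+\abs{\kpp_{0}}^{2}+\eps_{0}^{2}\aleq_{k}(1+\abs{\kpp_{0}})^{2},
\end{equation*}
with a constant independent of $\tau_{0}$ and $\xi_{0}$.

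\emph{Step 3: transfer to $(s,y)$.} From \eqref{eq:ss-var} with $\lmb=e^{-bs}$ and $\tau-t=e^{-s}$ we have $U(s,y)+e^{(b-1)s}\kpp=(\tau-t)^{b-1}u(t,\lmb y+\xi)=e^{(b-1)s}u(t,\lmb y+\xi)$, and changing variables $x=\lmb y+\xi$ (so $\ud y=e^{bs}\,\ud x$) yields
\begin{equation*}
\nrm{U+e^{(b-1)s}\kpp}_{L^{2}_{y}}=e^{(b-1)s}e^{\frac{b}{2}s}\nrm{u(t,\cdot)}_{L^{2}_{x}}=e^{(\frac32 b-1)s}\nrm{u(t,\cdot)}_{L^{2}_{x}}.
\end{equation*}
Combining this with Steps 1 and 2 gives \eqref{eq:U-L2}.

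The only genuine input is the $L^{2}$ dissipation of Step 1, which rests squarely on the structural assumptions (1) and (3) on the symbols $\Gmm$ and $\Ups$; the remaining steps are scaling bookkeeping, and I do not expect a serious obstacle. The two points requiring (routine) care are the justification of the energy identity at the regularity of the local solution, and tracking the exponents in Step 2 so that the final constant depends on neither $\tau_{0}$ nor $\xi_{0}$.
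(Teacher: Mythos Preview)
Your proof is correct and follows essentially the same approach as the paper: the $L^{2}$ dissipation of \eqref{eq:fkdv} in the original variables, a bound on $\nrm{u_0}_{L^2_x}$ from the initial data assumptions, and the change of variables \eqref{eq:ss-var}. One small typo in Step~3: the factor should be $(\tau-t)^{1-b}$ (equivalently $\frac{\tau-t}{\lambda}$), not $(\tau-t)^{b-1}$, though your conclusion $e^{(b-1)s}$ is correct.
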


\begin{proof}
We first express the initial data for $u$ in terms of the initial data for $U$. Due to~\eqref{eq:idu}, we have
$$
u(\tau_0,x) = e^{(1-b)\sgm_0}\left( \chi(\sgm_{0},y) \left(\mathring U(y) + e^{(b-1) \sgm_{0}} \kpp(\sgm_{0})\right) + \bar{\chi}(y) \sum_{j=2}^{2k-1} \frac{w_{j, 0}}{j!} y^{j} +  W_{0}(y) \right),
$$
where we remind the reader that $x = e^{-b \sgm_{0}} y + \xi_{0}$. To bound the first (and dominant) term in the above expression, recall from \eqref{eq:profile-Linfty} that $\abs{\mathring{U}(y)} \aleq e^{\frac{1}{2k+1} b \sgm_{0}} = e^{(b-1) \sgm_{0}}$ on the support of $\chi(\sgm_{0}, \cdot)$. Therefore,
\begin{align*}
\int e^{2 (1-b)\sgm_0}\chi(\sgm_{0},y)^{2} \left(\mathring U(y) + e^{(b-1) \sgm_{0}} \kpp(\sgm_{0})\right)^{2} \, \ud x
\aleq (1+\kpp_{0})^{2} \int \chi(\sgm_{0},e^{b \sgm_{0}} (x - \xi_{0}))^{2} \, \ud x \aleq (1+\kpp_{0})^{2},
\end{align*}
where we used Lemma~\ref{lem:chi} again in the last inequality.
The contribution of the last term is bounded precisely by \eqref{eq:idw4}, while the contribution of the second term would decay as $\sgm_{0} \to \infty$ according to our assumptions on the initial data. We eventually obtain:
\begin{equation*}
\nrm{u_{0}}_{L^{2}_x} \leq C (1+\kpp_{0}).
\end{equation*}
We now use the fact that equation~\eqref{eq:fkdv}  satisfies an a-priori $L^{2}$ bound, since $\Gmm(D_{x}) \rd_{x}$ is anti-symmetric (dispersive) and $\Ups(D_{x})$ is nonnegative (dissipative). We then calculate, using the fact that $u =e^{(1-b)s} (U + e^{(b-1) s} \kpp)$,
\begin{equation*}
	\int \abs{u}^{2} \, \ud x
	= \int e^{2(1-b)s} (U + e^{(b-1) s} \kpp)^{2} \, \ud (e^{-b s} y)
	= e^{2s - 3bs} \int  (U + e^{(b-1) s} \kpp)^{2} \, \ud y.
\end{equation*}
This readily implies
\begin{equation*}
	\nrm{U+e^{(b-1)s} \kpp}_{L^{2}_{y}} 
	= e^{s - \frac{3}{2} b s}\nrm{u(\tau(s) -e^{-s}, x)}_{L^{2}_{x}} \leq C e^{(\frac{3}{2} b - 1) s} (1+\kpp_{0}). \qedhere
\end{equation*}
\end{proof}

Finally, the third ingredient concerns some specific bounds for the initial data which follow from the requirements in Section~\ref{subsec:ss-id}. We record these bounds in the next subsection.

\subsection{Consequences of the initial data bounds}
We record here some consequences of the initial data bounds from Section~\ref{subsec:ss-id} which will be used in the proof of Theorem~\ref{thm:main}. By \eqref{eq:idw3} and interpolation, we have
\begin{equation} \label{eq:W0-H-ini}
	\nrm{\rd_{y} W_{0}}_{L^{2}} \leq C \eps_{0} e^{-(1-\frac{1}{2} b) \sgm_{0}}, \quad
	\nrm{\rd_{y}^{2k+3} W_{0}}_{L^{2}} \leq C \eps_{0} e^{-(1+(2k+\frac{3}{2}) b) \sgm_{0}},
\end{equation}
and by the Gagliardo--Nirenberg inequality,
\begin{equation} \label{eq:w2k+1-ini}
	\abs{\rd_{y}^{2k+1} W_{0}(0)} \leq C \eps_{0} e^{-(1+2k b) \sgm_{0}}.
\end{equation}
On the other hand, \eqref{eq:idw3} also implies
\begin{equation} \label{eq:W0-w-ini}
	\nrm{W_{0}}_{\dot{\calH}_{<e^{b\sgm_{0}}}^{1}}
	+ \nrm{W_{0}}_{\dot{\calH}_{<e^{b\sgm_{0}}}^{2k+3}} \leq C \eps_{0}.
\end{equation}
Noting that $\nrm{\bar{U}}_{\dot{\calH}_{<e^{b \sgm_{0}}}^{n}} \leq C_{n}$ for any $n = 0, 1, \ldots$, we have
\begin{equation} \label{eq:U-w-ini}
	\nrm{U(\sgm_{0}, \cdot)}_{\dot{\calH}_{<e^{b\sgm_{0}}}^{1}}
	+ \nrm{U(\sgm_{0}, \cdot)}_{\dot{\calH}_{<e^{b\sgm_{0}}}^{2k+3}} \leq C.
\end{equation}
By the definition of $\bar{U}$, \eqref{eq:W0-w-ini} and Lemma~\ref{lem:btstrap-Linfty}, we also obtain the pointwise bound
\begin{equation} \label{eq:U'-ini}
	\abs{\rd_{y} U(\sgm_{0}, y)} \leq C \max\set{(1+\abs{y})^{-\frac{2k}{2k+1}}, e^{-\sgm_{0}} \eps_{0}}.
\end{equation}

\subsection{Proof of the main theorem}
We are now ready to give a proof of Theorem~\ref{thm:main}.
\begin{proof}[Proof of Theorem~\ref{thm:main} assuming Lemmas~\ref{lem:main} and \ref{lem:top}]
Let $\tau_{\ast}(\cdot) = e^{-\sgm_{\ast}(\cdot)}$ and define $\sgm_{0}$ by $\tau_{0} = e^{-\sgm_{0}}$. In case $k = 1$, by a standard bootstrap argument using Lemma~\ref{lem:main}, there exist $C^{1}$ functions $\tau(\cdot)$, $\kpp(\cdot)$, and $\xi(\cdot)$ on $[\sgm_{0}, \infty)$ such that in the self-similar variables $(s, y, U)$ given by \eqref{eq:ss-var} with $\tau(\cdot)$, $\kpp(\cdot)$ and $\xi(\cdot)$, $U(s, y)$ is a globally trapped solution on $[\sgm_{0}, \infty)$ and $\tau$, $\kpp$, and $\xi$ solve \eqref{eq:modul-kpp}--\eqref{eq:modul-xi} with $\tau(\sgm_{0}) = \tau_{0}$ (so that $s = \sgm_{0}$ corresponds to $t = 0$), $\kpp(\sgm_{0}) = \kpp_{0}$ and $\xi(\sgm_{0}) = \xi_{0}$. In case $k \geq 2$, by Lemmas~\ref{lem:main} and \ref{lem:top}, there exists $\vec{w}_{0} \in B_{0}(e^{- \gmm \sgm_{0}})$ such that the above conclusion holds.

By integrating the ODEs for $\tau_{s}$, $\kpp_{s}$ and $\xi_{s}$ in \eqref{eq:ibootstrapmod}, it follows that $(\tau(s), \kpp(s), \xi(s)) \to (\tau_{+}, \kpp_{+}, \xi_{+})$ as $s \to \infty$, where
\begin{equation} \label{eq:modul-final}
	\abs{\tau_{+} - \tau} \aleq e^{-(1+\gmm)s}, \quad
	\abs{\kpp_{+} - \kpp} \aleq e^{-(b-1+\gmm)s}, \quad
	\abs{\xi_{+} - \xi - (\tau_{+} - \tau + e^{-s}) \kpp_{+}} \aleq e^{-(b+\gmm)s}.
\end{equation}
In particular, by \eqref{eq:ibootstrapmod} and $\abs{\tau_{+} - \tau} \aleq e^{-(1+\gmm)s}$, it follows that the change of variables $s \to t$ is a well-defined strictly increasing map from $[\sgm_{0}, \infty)$ onto $[0, \tau_{+})$. Since $\rd_{y} U(s, 0) = -1$ for all $s$, it follows that $\rd_{x} u(t, \xi(s(t))) = - (\tau(s(t)) - t)^{-1} \to \infty$ as $t \to \tau_{+}$, which implies that $u$ indeed blows up as $t \nearrow \tau_{+}$. The desired bounds on $\tau_{+}$, $\kpp_{+}$ and $\xi_{+}$ also follow from \eqref{eq:modul-final}.

To complete the proof, it remains to establish the regularity and blow-up properties of $u$, which we derive from properties of $U$ and the change of variables \eqref{eq:ss-var}. To begin with, note that, by \eqref{eq:ibst-w-1}--\eqref{eq:ibst-w-high} and Lemma~\ref{lem:btstrap-Linfty}, we have
\begin{equation} \label{eq:U'-ptwise-sharp}
	\abs{U'(s, y)} \leq C A \max\set*{\abs{y}^{-\frac{2k}{2k+1}}, e^{-s}} \quad \hbox{ for } \abs{y} \geq 1.
\end{equation}
On the other hand, $\abs{U'(s, y)} \leq 2$ for $\abs{y} \leq 1$ by \eqref{eq:ibst1}--\eqref{eq:ibst3}. Using $U(s, 0) = 0$ and by integration, we arrive at
\begin{equation} \label{eq:U-ptwise-sharp}
	\abs{U(s, y)} \leq 
	\begin{cases}
	C \abs{y} & \hbox{ for } \abs{y} \leq 1, \\
	C A \max\set*{\abs{y}^{\frac{1}{2k+1}}, \abs{y} e^{-s}} & \hbox{ for } \abs{y} \geq 1.
	\end{cases}
\end{equation}
For $\abs{y} > e^{b s}$, we may eliminate the linear growth $\abs{y} e^{-s}$ by using the Sobolev inequality based on the $L^{2}$ bound \eqref{eq:U-L2} and 
\begin{equation*}
\nrm{\rd_{y} (U + e^{(b-1) s} \kpp)}_{L^{2}( \abs{y} > e^{b s} )} = \nrm{\rd_{y} U}_{L^{2} ( \abs{y} > e^{b s} )} \leq e^{( \frac{1}{2} b-1) s} \nrm{U}_{\dot{\calH}_{<e^{bs}}^{1}} \leq e^{( \frac{1}{2} b - 1) s} A.
\end{equation*}
As a consequence, we obtain
\begin{equation} \label{eq:U-ptwise-outside}
	\abs{U(s, y) + e^{(b-1)} \kpp} \leq C e^{( b-1) s} (1 + \kpp_{0} + A) \quad \hbox{ for } \abs{y} \geq e^{b s},
\end{equation}
which is an improvement over \eqref{eq:U-ptwise-sharp}. In particular, it follows that 
\begin{equation} \label{eq:U-ptwise-uniform}
\abs{U(s, y)} \leq C e^{(b-1) s} (1 + \kpp_{0} + A) \quad \hbox{ for all } y \in \bbR,
\end{equation}
which implies via \eqref{eq:ss-var} that $\nrm{u}_{L^{\infty}}$ is uniformly bounded up to the blow-up time $\tau_{+}$.

To prove the upper bounds on the H\"older semi-norms, first observe the simple gradient bound $\abs{U'} \leq C A \abs{y}^{-\frac{2k}{2k+1}}$ from \eqref{eq:ibst1}--\eqref{eq:ibst3} and \eqref{eq:U'-ptwise-sharp}. 

For each $\Dlt y > 0$, note that
\begin{align*}	
	[U]_{\calC^{\frac{1}{2k+1}}}
	= \sup_{y \in \bbR, \Dlt y > 0} \frac{\abs{U(s, y + \Dlt y) - U(s, y)}}{(\Dlt y)^{\frac{1}{2k+1}}}
	\leq C A \sup_{y \in \bbR, \Dlt y > 0} (\Dlt y)^{-\frac{1}{2k+1}} \int_{y}^{y+\Dlt y} \abs{y'}^{-\frac{2k}{2k+1}} \, \ud y' 
	\leq C_{\sgm} A.
\end{align*}
By \eqref{eq:ss-var}, the boundedness of $[u]_{\calC^{\frac{1}{2k+1}}}$ up to the blow-up time $\tau_{+}$ follows. Then interpolating with the trivial upper bound $\nrm{\rd_{x} u}_{L^{\infty}} = (\tau_{+} - t)^{-1} \nrm{\rd_{y} U}_{L^{\infty}} \aleq (\tau_{+} - t)^{-1}$, the upper bounds when $\frac{1}{2k+1} < \sgm < 1$ follow.

Finally, to establish the lower bounds on the H\"older semi-norms, note first that $\inf_{s \geq \sgm_{0}, \, \abs{y} \leq c_{0}} \abs{U'(s, y)} > 0$ for some $c_{0} > 0$ by Taylor expansion. By the mean value theorem,
\begin{equation*}
	\abs{y}^{-\sgm} \abs{U(s, y) - U(s, 0)} \geq C \abs{y}^{1-\sgm}   \quad \hbox{ for } \abs{y} \leq c_{0},
\end{equation*}
and then by \eqref{eq:ss-var}, the desired lower bound follows.
\end{proof}

\section{Lemmas on Fourier multiplier} \label{sec:fourier}
In this section, we establish key analytic lemmas concerning the operators $\calH$ and $\calL$, whose definitions are recalled here for convenience:
\begin{align*}
\calH(V) &= - P_{> 0}(e^{b s} D_{y}) \left( e^{-\max\set{\alp, \bt, 0} b s} \Gmm(e^{bs} D_{y}) e^{bs} \rd_{y} V  + e^{- \max\set{\alp, \bt, 0} b s} \Ups(e^{bs} D_{y}) V \right), \\
\calL(V) &= - P_{\leq 0}(e^{b s} D_{y}) \left( \Gmm(e^{bs} D_{y}) e^{bs} \rd_{y} V + \Ups(e^{bs} D_{y}) V \right).
\end{align*}

Observe that the assumptions on $\Gmm$ and $\Ups$ remain true under any increase of $\alp$ or $\bt$. In the proofs in this section, we will often {\bf assume, without loss of generality, that $\alp = \bt$ and $\alp \geq 0$, so that $\max\set{\alp, \bt, 0} = \alp$.} 

We begin with simple $L^{2}$ and $L^{\infty}$ estimates for $\calH$ and $\calL$.
\begin{lemma} \label{lem:mult-L2Linfty}
For any $\ell \geq 0$, we have
\begin{align}
\nrm{\rd_{y}^{\ell} \calL(V)}_{L^{2}} & \aleq_{\alp, \bt} e^{-\ell b s} \nrm{V}_{L^{2}}, \label{eq:calL-L2} \\
\nrm{\rd_{y}^{\ell} \calL(V)}_{L^{\infty}} &\aleq_{\alp, \bt} e^{-(\frac{1}{2}+\ell) bs}\nrm{V}_{L^{2}}. \label{eq:calL-Linfty}
\end{align}
For $\max\set{\alp, \bt} < 1$, we have
\begin{align} 
	\nrm{\calH(V)}_{L^{2}} & \aleq_{\alp, \bt} \nrm{V}_{L^{2}}^{1-\max\set{\alp, \bt, 0}} \nrm{\rd_{y} V}_{L^{2}}^{\max\set{\alp, \bt, 0}}, \label{eq:calH-L2} \\
	\nrm{\calH(V)}_{L^{\infty}} &\aleq_{\alp, \bt} \nrm{V}_{L^{\infty}}^{1-\frac{2}{3} \max\set{\alp, \bt, 0}} \nrm{\rd_{y}^{2} V}_{L^{2}}^{\frac{2}{3}\max\set{\alp, \bt, 0}}. \label{eq:calH-Linfty}
\end{align}
\end{lemma}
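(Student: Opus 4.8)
The idea is to reduce all four estimates to pointwise (and low-order-derivative) bounds on the Fourier symbols of $\calH$ and $\calL$ that hold \emph{uniformly in $s$}, and then invoke only elementary Fourier analysis. Working under the stated reduction $\alp = \bt \geq 0$, the symbol of $\calL$ is $m_{\calL}(\xi) = -P_{\leq 0}(e^{bs}\xi)\bigl(i\,\Gmm(e^{bs}\xi)(e^{bs}\xi) + \Ups(e^{bs}\xi)\bigr)$, which is supported in $\set{\abs{\xi}\leq 2e^{-bs}}$; there $\abs{e^{bs}\xi}\leq 2$, so the boundedness of $\Gmm(\cdot)(\cdot)$ and $\Ups$ on $\set{\abs{\cdot}\leq 1}$ together with \eqref{eq:symb-bnd} on $1\leq\abs{\cdot}\leq 2$ gives $\abs{m_{\calL}(\xi)}\aleq 1$ uniformly in $s$. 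For $\calH$, write $g(\eta) = \Gmm(\eta)\eta$ (an order-$\alp$ symbol), so that the symbol of $\calH$ is $m_{\calH}(\xi) = -P_{>0}(e^{bs}\xi)\,e^{-\alp bs}\bigl(i\,g(e^{bs}\xi) + \Ups(e^{bs}\xi)\bigr)$. The key point is the exact cancellation in $\rd_{\xi}^{I}[g(e^{bs}\xi)] = e^{\abs{I}bs}g^{(I)}(e^{bs}\xi)$: on the support $\abs{e^{bs}\xi}\geq 1$ of $P_{>0}(e^{bs}\cdot)$, \eqref{eq:symb-bnd} yields $\abs{\rd_{\xi}^{I}[g(e^{bs}\xi)]}\aleq e^{\abs{I}bs}\abs{e^{bs}\xi}^{\alp-\abs{I}} = e^{\alp bs}\abs{\xi}^{\alp-\abs{I}}$, and the prefactor $e^{-\alp bs}$ kills the $s$-dependence; combined with the harmless contribution of $P_{>0}(e^{bs}\xi)$ (whose derivatives cost $e^{\abs{I}bs}\aleq\abs{\xi}^{-\abs{I}}$ on $\abs{\xi}\sim e^{-bs}$) and the same bound for $\Ups$, one gets $\abs{\rd_{\xi}^{I} m_{\calH}(\xi)}\aleq_{I}\abs{\xi}^{\alp-\abs{I}}$ for all $\xi\neq 0$, \emph{uniformly in $s$}. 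In other words, $\calH$ is an $s$-uniform family of Fourier multipliers of order $\alp$, supported in $\set{\abs{\xi}\gtrsim e^{-bs}}$.

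\noindent Granting these symbol bounds, three of the four estimates are immediate. For $\calL$: Plancherel gives $\nrm{\rd_{y}^{\ell}\calL(V)}_{L^{2}} \leq \sup_{\abs{\xi}\leq 2e^{-bs}}\abs{\xi}^{\ell}\abs{m_{\calL}(\xi)}\,\nrm{V}_{L^{2}}\aleq e^{-\ell bs}\nrm{V}_{L^{2}}$; and $\nrm{f}_{L^{\infty}}\leq\nrm{\wh f}_{L^{1}}$ together with Cauchy--Schwarz over the frequency support (of measure $\aeq e^{-bs}$) gives $\nrm{\rd_{y}^{\ell}\calL(V)}_{L^{\infty}} \leq \int\abs{\xi}^{\ell}\abs{m_{\calL}(\xi)}\abs{\wh V(\xi)}\,\ud\xi \aleq e^{-\ell bs}(e^{-bs})^{1/2}\nrm{V}_{L^{2}} = e^{-(\ell+\frac12)bs}\nrm{V}_{L^{2}}$. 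For the $L^{2}$ bound on $\calH$: $\nrm{\calH(V)}_{L^{2}}^{2} = \int\abs{m_{\calH}}^{2}\abs{\wh V}^{2}\aleq\int\abs{\xi}^{2\alp}\abs{\wh V}^{2}$, and the pointwise identity $\abs{\xi}^{2\alp}\abs{\wh V}^{2} = \bigl(\abs{\wh V}^{2}\bigr)^{1-\alp}\bigl(\abs{\xi}^{2}\abs{\wh V}^{2}\bigr)^{\alp}$ with H\"older (exponents $\tfrac{1}{1-\alp}$, $\tfrac1\alp$, valid since $0\leq\alp<1$) yields $\nrm{\calH(V)}_{L^{2}}\aleq\nrm{V}_{L^{2}}^{1-\alp}\nrm{\rd_{y}V}_{L^{2}}^{\alp}$.

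\noindent The remaining and most delicate estimate is the $L^{\infty}$ bound on $\calH$, which I would establish by a Littlewood--Paley decomposition and interpolation: write $\calH(V) = \sum_{k\in\bbZ}P_{k}\calH(V)$. By the symbol bounds above, $2^{-k\alp}m_{\calH}(\xi)P_{k}(\xi)$ is a smooth bump adapted to $\abs{\xi}\sim 2^{k}$ with constants uniform in $k$ and $s$, so its kernel has uniformly bounded $L^{1}$ norm and $\nrm{P_{k}\calH(V)}_{L^{\infty}}\aleq 2^{k\alp}\nrm{V}_{L^{\infty}}$ for every $k$; likewise, writing $P_{k}\calH(V) = (P_{k}\calH\Dlt^{-1})(\rd_{y}^{2}V)$ and estimating the kernel of $P_{k}\calH\Dlt^{-1}$ in $L^{2}$ (its $L^{2}_{\xi}$-norm is $\aeq 2^{(\alp-2)k}2^{k/2}$), Young's inequality gives $\nrm{P_{k}\calH(V)}_{L^{\infty}}\aleq 2^{(\alp-\frac32)k}\nrm{\rd_{y}^{2}V}_{L^{2}}$ for every $k$. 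Summing the first bound over $k\leq k_{0}$ (convergent since $\alp>0$) and the second over $k>k_{0}$ (convergent since $\alp<\tfrac32$) and optimizing at $2^{k_{0}} = \bigl(\nrm{\rd_{y}^{2}V}_{L^{2}}/\nrm{V}_{L^{\infty}}\bigr)^{2/3}$ yields $\nrm{\calH(V)}_{L^{\infty}}\aleq\nrm{V}_{L^{\infty}}^{1-\frac23\alp}\nrm{\rd_{y}^{2}V}_{L^{2}}^{\frac23\alp}$. The main obstacle is precisely this interpolation bookkeeping: one must ensure the per-frequency kernel bounds are uniform in $s$ (which is exactly what the symbol computation in the first paragraph provides) and that both geometric series converge. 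The borderline case $\alp=\bt=0$ (where the low-frequency sum diverges) is handled separately: there $\calH$ has an order-$0$ symbol and the bound reduces to an $L^{2}$-type statement, or one exploits the frequency support $\abs{\xi}\gtrsim e^{-bs}$ at the cost of an inessential logarithmic factor absorbed by the $e^{-\mu s}$ prefactor in the applications.
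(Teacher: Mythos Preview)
Your proof is correct and follows essentially the same route as the paper's: bounded symbol on a compact frequency ball for $\calL$ (giving Plancherel and Bernstein), and an $s$-uniform order-$\alp$ symbol for $\calH$ handled by Littlewood--Paley with the same per-piece bounds $\nrm{P_{k}\calH V}_{L^{\infty}}\aleq\min\{2^{\alp k}\nrm{V}_{L^{\infty}},\,2^{(\alp-\frac32)k}\nrm{\rd_{y}^{2}V}_{L^{2}}\}$ and optimization. Your symbol computation for $s$-uniformity is more explicit than the paper's rescaling argument, and you flag the $\alp=0$ endpoint (where the low-frequency sum fails to converge geometrically) which the paper does not address; your proposed workaround via the frequency cutoff $\abs{\xi}\gtrsim e^{-bs}$ is adequate for the applications in the paper.
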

\begin{proof}
The $L^{2}$ bound \eqref{eq:calL-L2} for $\calL$ is simply a consequence of the fact that, thanks to the frequency projection $P_{\leq 0}(e^{b s} D_{y})$ and the assumptions on $\Gmm$, $\Ups$, $\calL$ is a Fourier multiplier with bounded symbol. The case $\ell \geq 1$ then follows, thanks again to the frequency projection $P_{\leq 0}(e^{b s} D_{y})$. Moreover, \eqref{eq:calL-Linfty} follows from Bernstein's inequality.

To prove \eqref{eq:calH-L2}, it suffices to prove that, for all $k \in \bbZ$,
\begin{align*}
	\nrm{P_{k}(D_{y}) \calH V}_{L^{2}}
	& \aleq \min\set*{2^{\alp k} \nrm{P_{k} (D_{y}) V}_{L^{2}}, 2^{-(1-\alp)k} \nrm{\rd_{y} P_{k}(D_{y}) V}_{L^{2}}}, \\
	\nrm{P_{k}(D_{y}) \calH V}_{L^{\infty}}
	& \aleq \min\set*{2^{\alp k} \nrm{P_{k} (D_{y}) V}_{L^{\infty}}, 2^{-(\frac{3}{2}-\alp)k} \nrm{\rd_{y}^{2} P_{k}(D_{y}) V}_{L^{2}}}.
\end{align*}
To see this (in particular, the $L^{\infty}$ bound), note that 
\begin{equation*}
P_{k}(D_{y}) e^{-b\alp s}\Gmm(e^{b s} D_{y}) e^{b s} \rd_{y} V = 2^{\alp k} K_{k} \ast V(y), \quad \hbox{ where } 2^{\alp k} K_{k} = \calF^{-1}_{\xi_{y}}[i P_{k}(\xi_{y}) e^{-b\alp s}\Gmm(e^{b s} \xi_{y}) e^{b s} \xi_{y}].
\end{equation*}

Indeed, by the assumptions on $\Gmm$, the kernel of $P_{k'}(D_{x}) \Gmm(D_{x}) \rd_{x}$ is of the form $2^{\alp k'} K_{k'}(x)$, where $\int \abs{K_{k}(x)} \, \ud x \aleq 1$ (independent of $k$). By rescaling $x = e^{b s} y$, we see that the kernel of $P_{k}(D_{y}) e^{-b \alp s} \Gmm(e^{b s} D_{y}) e^{b s} \rd_{y} $ is of the form $2^{\alp k} e^{-bs} K_{k-(\log 2)^{-1} b s}(e^{-bs} y)$,  where the $y$-integral of $e^{-bs} \abs{K_{k-(\log 2)^{-1} b s}(e^{-bs} y)}$ is uniformly bounded in $k$. The desired bounds for the contribution of $\Gmm$ in $\calH$ now follows from Young's inequality. A similar bound holds for $\Ups$. \qedhere
\end{proof}

Next, we prove a sharp upper bound on the kernel of the operator $\calH$.
\begin{lemma} \label{lem:mult-ker}
For each $s$, there exists a function $K_{s} \in C^{\infty}(\bbR \setminus \set{0})$ such that
\begin{align*}
\calH V(y) = \int_{-\infty}^{\infty} K_{s}(y - y') \rd_{y} V(y') \, \ud y', 
\end{align*}
where
\begin{equation*}
\abs{K_{s}(y)} + \abs{y} \abs{\rd_{y} K_{s}(y)} \aleq \abs{y}^{-\max\set{\alp, \bt, 0}}.
\end{equation*}
\end{lemma}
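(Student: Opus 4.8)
\emph{The plan} is to rewrite $\calH$ as a single Fourier multiplier composed with $\rd_y$, show that the resulting multiplier is a symbol of order $\max\set{\alp,\bt,0}-1$ \emph{uniformly in $s$}, and then deduce the kernel bound from the standard Littlewood--Paley / non-stationary-phase estimate. By the reduction already in force in this section I may assume $\alp=\bt\geq0$, so $\max\set{\alp,\bt,0}=\alp$.

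First I would perform the division by $\rd_y$. The $\Gmm$-contribution to $\calH$ has symbol $-P_{>0}(e^{bs}\xi)\,e^{-\alp bs}\Gmm(e^{bs}\xi)e^{bs}\xi\cdot i$, from which $i\xi$ factors out harmlessly; for the $\Ups$-contribution the projector $P_{>0}(e^{bs}D_y)$ removes a neighbourhood of $\xi=0$, so $\Ups(e^{bs}\xi)/(i\xi)$ is a smooth symbol on the support of the cutoff. Hence $\calH V=n(D_y)\rd_yV$, where
$$
n(\xi)=-P_{>0}(e^{bs}\xi)\Bigl(e^{-\alp bs}\Gmm(e^{bs}\xi)e^{bs}+e^{-\alp bs}\tfrac{\Ups(e^{bs}\xi)}{i\xi}\Bigr),
$$
and $K_s:=\calF^{-1}[n]$ is the desired kernel (up to normalization), with $\calH V(y)=\int K_s(y-y')\rd_yV(y')\,\ud y'$.

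The heart of the argument is to prove $\abs{\rd_\xi^j n(\xi)}\aleq_j\abs{\xi}^{\alp-1-j}$ for all $j\geq0$, with constants independent of $s$. I would do this via the rescaling $\eta=e^{bs}\xi$: on $\supp P_{>0}(e^{bs}\xi)$ one has $\abs{\eta}\gtrsim1$, which is exactly where \eqref{eq:symb-bnd} shows $\Gmm(\eta)$ and $\Ups(\eta)/\eta$ to be symbols of order $\alp-1$. Differentiating $j$ times in $\xi$ produces chain-rule factors $e^{jbs}$ acting on $(\,\cdot\,)^{(j)}(e^{bs}\xi)$, and together with the explicit factor $e^{bs}$ (respectively the $e^{bs}$ from $\Ups(e^{bs}\xi)/\xi=e^{bs}(\Ups(\eta)/\eta)\big|_{\eta=e^{bs}\xi}$) this yields a net $e^{\alp bs}\abs{\xi}^{\alp-1-j}$ on $\abs{e^{bs}\xi}\gtrsim1$; the prefactor $e^{-\alp bs}$ --- which is precisely why it appears in \eqref{eq:H-def} --- cancels it, leaving the $s$-uniform bound. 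Derivatives falling on the cutoff, $\rd_\xi^i[P_{>0}(e^{bs}\xi)]=e^{ibs}(\rd^iP_{>0})(e^{bs}\xi)$, are supported in $\abs{\xi}\sim e^{-bs}$, where $e^{ibs}\sim\abs{\xi}^{-i}$, so the Leibniz rule keeps the total at $\abs{\xi}^{\alp-1-j}$. Granting this, I would decompose $n=\sum_k n_k$ with $n_k(\xi)=P_k(\xi)n(\xi)$ (only $2^k\gtrsim e^{-bs}$ contribute), use $\abs{\calF^{-1}[n_k](y)}\aleq2^{\alp k}$ and, after integrating by parts $N$ times, $\abs{\calF^{-1}[n_k](y)}\aleq_N2^{\alp k}(2^k\abs{y})^{-N}$, and sum: for $\alp>0$ the low-$k$ part is dominated by its top term $\sim\abs{y}^{-\alp}$ (independently of the lower cutoff $2^k\gtrsim e^{-bs}$) and the high-$k$ part with $N$ large also contributes $\aleq\abs{y}^{-\alp}$, giving $\abs{K_s(y)}\aleq\abs{y}^{-\alp}$. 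The same computation for $i\xi\,n(\xi)$ (a symbol of order $\alp$) gives $\abs{\rd_yK_s(y)}\aleq\abs{y}^{-1-\alp}$, and for $(i\xi)^m n(\xi)$ it shows $K_s\in C^\infty(\bbR\setminus\set{0})$ --- away from the origin, integrating by parts in $\xi$ makes $\rd_\xi^M n$ integrable for $M$ large, so $K_s$ is an honest smooth function there despite $n$ itself not being integrable.

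The main obstacle is the uniform-in-$s$ symbol estimate for $n$: one must verify that the powers of $e^{bs}$ generated by differentiating $\Gmm(e^{bs}\xi)$, $\Ups(e^{bs}\xi)$ and $P_{>0}(e^{bs}\xi)$ conspire, together with the prefactor $e^{-\alp bs}$, to produce a genuine order-$(\alp-1)$ symbol; once this is in hand, the passage to the kernel bound is routine. (When $\max\set{\alp,\bt,0}=0$ the top term in the dyadic sum is only logarithmically divergent, so one obtains a $\log$ in place of $\abs{y}^{0}$ in this endpoint case; this modification is inessential for the applications.)
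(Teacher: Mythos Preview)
Your argument is correct and rests on the same idea as the paper's: both reduce to the standard kernel bound for a symbol of order $\alp-1$ (after factoring out $\rd_y$). The organizational difference is that the paper works in the unscaled $x$-variables, where $P_{\geq 0}(D_x)\Gmm(D_x)$ and $P_{\geq 0}(D_x)\Ups(D_x)\rd_x^{-1}$ are \emph{$s$-independent} symbols of order $\alp-1$, asserts the kernel bound $\abs{K(z)}+\abs{z}\abs{\rd_z K(z)}\aleq\abs{z}^{-\alp}$ there, and then obtains $K_s$ via the one-line rescaling $K_s(y)=e^{-\alp bs}K(e^{-bs}y)$, under which $\abs{y}^{-\alp}$ is exactly invariant. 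Your Fourier-side substitution $\eta=e^{bs}\xi$ is the dual of this physical-side rescaling, so the $e^{bs}$-bookkeeping you carry through the symbol estimate on $n$ is precisely what the paper's rescaling accomplishes in one stroke; the paper's route is more economical, but yours makes the uniformity mechanism more explicit. Your endpoint remark at $\max\set{\alp,\bt,0}=0$ is well taken --- a log appears in that borderline case --- and is harmless in the only place the lemma is invoked (the proof of \eqref{eq:forcing-H-0}), where there is slack between $\mu$ and $\mu_0$.
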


\begin{proof}
Without loss of generality, assume $\alp = \bt \geq 0$. By the Fourier inversion formula, we have
\begin{align*}
P_{\geq 0}(D_{x}) \Gmm(D_{x}) \rd_{x} f & = \int_{-\infty}^{\infty} K(x - x') \rd_{x} f(x') \, \ud x', \\
P_{\geq 0}(D_{x}) \Ups(D_{x}) f & = \int_{-\infty}^{\infty} K'(x - x') \rd_{x} f(x') \, \ud x',
\end{align*}
where
\begin{align*}
	\abs{K(z)} + \abs{z} \abs{\rd_{z} K(z)} & \aleq \abs{z}^{-\alp}, \\
	\abs{K'(z)} + \abs{z} \abs{\rd_{z} K'(z)} & \aleq \abs{z}^{-\alp}.
\end{align*}
The desired statement now follows by applying the rescaling $x = e^{b s} y$. \qedhere
\end{proof}

Finally, we formulate and prove a key commutator estimate for $\calH$ in the weighted $L^{2}$-Sobolev space $\dot{\calH}_{<L}^{n}$ introduced earlier. For this purpose, it is instructive to generalize the weight in the semi-norm and introduce
\begin{equation*}
	\nrm{V}_{\dot{\calH}_{< L}^{n, \nu}} 
	= \sup_{j \in \bbZ, \, 2^{j} < L} \left( \int_{2^{j-1} < \abs{y} < 2^{j}} (\abs{y}^{\nu} \rd_{y}^{n} V)^{2} \, \ud y \right)^{\frac{1}{2}}
	+ L^{\nu} \left( \int_{\abs{y} > \frac{L}{2}} (\rd_{y}^{n} V)^{2} \, \ud y \right)^{\frac{1}{2}}.
\end{equation*}

\begin{lemma} \label{lem:mult-w}
Let $-\frac{1}{2} < \nu < \frac{1}{2}$, $\ell \in \set{0, 1, \ldots}$ and $L > 1$. Let $\varpi$ be a smooth function satisfying one of the following assumptions:
\begin{enumerate}[leftmargin=*, label=Case~\arabic*.]
\item $\supp \varpi \subseteq \set{2^{j_{0}-1-c_{0}} < \abs{y} < 2^{j_{0}+c_{0}}}$ and $0 \leq \varpi \leq C_{0} 2^{(\nu + \ell) j_{0}}$ for some $c_{0}, C_{0} > 0$ and $j_{0} \in \bbZ$ such that $2^{j_{0}} < L$, or
\item $\supp \varpi \subseteq \set{\abs{y} > 2^{j_{0}-1-c_{0}}}$ and $0 \leq \varpi \leq C_{0} 2^{(\nu + \ell) j_{0}}$ for some $c_{0}, C_{0} > 0$ and $j_{0} = \lfloor \log_{2} L \rfloor$.
\end{enumerate}
Then for any $s \in \bbR$ and $V \in \calH_{< L}^{\ell+1, \nu}$, we have
\begin{equation} \label{eq:calH-w}
\nrm{\varpi \calH \rd_{y}^{\ell} V}_{L^{2}} \aleq_{\alp, \bt, \nu, c_{0}, C_{0}} 2^{-\max\set{\alp, \bt, 0} j_{0}} (\nrm{V}_{\dot{\calH}_{< L}^{0, \nu}} + \nrm{V}_{\dot{\calH}_{< L}^{\ell+1, \nu}}),
\end{equation}
where the implicit constant is independent of $s$ and $L$.

Suppose, in addition, that $\abs{\varpi'} \leq C_{0} 2^{(\nu + \ell - 1)j_{0}}$ and $\ell \geq 1$. Then for any $s \in \bbR$ and $V \in \calH_{< L}^{\ell, \nu}$, we have
\begin{equation} \label{eq:calH-comm-w}
\nrm{[\varpi, \calH] \rd_{y}^{\ell} V}_{L^{2}} \aleq_{\alp, \bt, \nu, c_{0}, C_{0}} 2^{-\max\set{\alp, \bt, 0} j_{0}} (\nrm{V}_{\dot{\calH}_{< L}^{0, \nu}} + \nrm{V}_{\dot{\calH}_{< L}^{\ell, \nu}}),
\end{equation}
where the implicit constant is independent of $s$ and $L$.
\end{lemma}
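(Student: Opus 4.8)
The plan is to prove \eqref{eq:calH-w} first and then obtain \eqref{eq:calH-comm-w} by rerunning the same argument, exploiting the two extra cancellations carried by the commutator. As in the other proofs of this section, I would reduce to $\alp = \bt \geq 0$ and set $m := \alp = \max\set{\alp,\bt,0}$; write also $Q := \lfloor \log_{2} L\rfloor$, so that $j_{0} \leq Q$ in both cases. The only analytic input beyond the definitions is the kernel representation of Lemma~\ref{lem:mult-ker}, which here reads $\calH\rd_{y}^{\ell}V(y) = \int K_{s}(y-y')\,\rd_{y}^{\ell+1}V(y')\,\ud y'$ with $\abs{K_{s}(z)} + \abs{z}\abs{\rd_{z}K_{s}(z)} \aleq \abs{z}^{-m}$. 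Fix a smooth dyadic partition $1 = \sum_{p}\eta_{p}$ adapted to the shells $\set{2^{p-1} < \abs{y} < 2^{p}}$ (with $\abs{\rd_{y}^{i}\eta_{p}} \aleq_{i} 2^{-ip}$), write $V = \sum_{p} V_{p}$ with $V_{p} = \eta_{p}V$ (grouping $p \geq Q$ into one tail piece supported in $\set{\abs{y} > L/4}$, governed by the second summand of the norms), and --- using that $\varpi$ lives, up to a bounded dilation, in $\set{\abs{y}\sim 2^{j_{0}}}$ in Case~1 and in $\set{\abs{y}\gtrsim 2^{j_{0}}}$ in Case~2 --- split the contribution of each $V_{p}$ according to the position of $2^{p}$ relative to $2^{j_{0}}$: \emph{inner} ($p \ll j_{0}$), \emph{outer} ($p \gg j_{0}$), and \emph{diagonal} ($\abs{p-j_{0}} = O(1)$).

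In the \emph{inner} regime one has $\abs{y-y'}\sim 2^{j_{0}}$ on $\supp\varpi\times\supp\eta_{p}$, so $K_{s}(y-y')$ is essentially constant on each source shell; using $\int\rd_{y}^{\ell+1}V_{p} = 0$ to discard the constant part and $\abs{\rd_{z}K_{s}} \aleq 2^{-(m+1)j_{0}}$ to estimate the remainder, the contribution is dominated by a geometric series in $p-j_{0}$ that converges because $\nu > -\tfrac12$, the innermost source shells (where the weight degenerates) being handled by interpolating $\dot{\calH}_{<L}^{0,\nu}$ against $\dot{\calH}_{<L}^{\ell+1,\nu}$. In the \emph{outer} regime $\abs{y-y'}\sim\abs{y'}\sim 2^{p}$, hence $\abs{K_{s}(y-y')} \aleq 2^{-mp}$, and a Cauchy--Schwarz estimate on each shell followed by summation (convergent because $\nu < \tfrac12$) again gives a bound controlled by the right-hand side; this regime is essentially empty in Case~2. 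Both regimes are routine, with derivatives falling on $\eta_{p}$ costing $2^{-p}$ each and absorbed as lower order.

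The \emph{diagonal} regime is the substantive one, since there the support of $\varpi$ no longer localizes $K_{s}$ and one must instead use that $\calH$ is a genuine operator of order $m$ whose symbol is supported in $\set{\abs{\eta}\gtrsim L^{-1}} \supseteq \set{\abs{\eta}\gtrsim 2^{-j_{0}}}$. I would Littlewood--Paley decompose $\rd_{y}^{\ell}V_{p}$ in frequency, use $\nrm{\calH P_{q}(\rd_{y}^{\ell}V_{p})}_{L^{2}} \aleq 2^{mq}\nrm{P_{q}(\rd_{y}^{\ell}V_{p})}_{L^{2}}$, and in each band play off the two elementary bounds $\nrm{P_{q}(\rd_{y}^{\ell}V_{p})}_{L^{2}} \aleq 2^{\ell q}\nrm{P_{q}V_{p}}_{L^{2}}$ (cheap in low bands, fed by $\dot{\calH}_{<L}^{0,\nu}$) and $\nrm{P_{q}(\rd_{y}^{\ell}V_{p})}_{L^{2}} \aleq 2^{-q}\nrm{P_{q}(\rd_{y}^{\ell+1}V_{p})}_{L^{2}}$ (cheap in high bands, fed by $\dot{\calH}_{<L}^{\ell+1,\nu}$), summing the two geometric series in $q$ --- the frequency cutoff of $\calH$ simply discards the bands $2^{q} \lesssim L^{-1}$ that do not occur anyway --- and inserting $\abs{\varpi} \aleq 2^{(\nu+\ell)j_{0}}$. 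For \eqref{eq:calH-comm-w} one starts instead from
\begin{equation*}
[\varpi,\calH]\rd_{y}^{\ell}V(y) = \int_{-\infty}^{\infty} K_{s}(y-y')\Big( \big(\varpi(y)-\varpi(y')\big)\rd_{y}^{\ell+1}V(y') - \varpi'(y')\,\rd_{y}^{\ell}V(y') \Big)\, \ud y' ,
\end{equation*}
in whose first term the mean value bound $\abs{\varpi(y)-\varpi(y')} \aleq 2^{(\nu+\ell-1)j_{0}}\abs{y-y'}$ upgrades the kernel to $\abs{y-y'}^{1-m}$, giving one extra power of decay off the diagonal and, on the diagonal, an integration by parts trading $\rd_{y}^{\ell+1}V$ for $\rd_{y}^{\ell}V$ --- which is exactly why $\dot{\calH}_{<L}^{\ell,\nu}$ (and the hypothesis $\ell\geq 1$) appears on the right-hand side --- while the second term is already of order $\ell$ and carries a prefactor smaller by $2^{-j_{0}}$; rerunning the three-regime analysis then yields \eqref{eq:calH-comm-w}.

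The step I expect to be the main obstacle is the diagonal regime: one must extract \emph{exactly} the factor $2^{-m j_{0}}$, which forces an honest use of the frequency localization --- Young's inequality, or anything that does not see the oscillation, is hopelessly lossy --- and the bookkeeping must split the frequency sum correctly between the two norms while ensuring that the cutoff of $\calH$ at scale $L^{-1}$ (possibly much finer than $2^{-j_{0}}$) does not upset the balance. A secondary point is the convergence of the inner and outer geometric series, which is precisely where the hypothesis $-\tfrac12 < \nu < \tfrac12$ is used, together with the careful treatment of the innermost ($\abs{y}\lesssim 1$) and tail ($\abs{y}\sim L$) shells, where the weights degenerate.
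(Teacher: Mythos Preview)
Your approach is genuinely different from the paper's and is a reasonable physical-space alternative, but as written it has a real gap in the \emph{outer} regime.

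The paper does not work with the full kernel $K_{s}$ at all; instead it first decomposes $\calH$ itself in frequency, writing schematically $\calH\rd_{y}^{\ell} = \sum_{k} 2^{(\alp+\ell)k}\tilde{P}_{k}$, and only then localizes $V$ in space via $\eta_{j}$. The payoff is that each $\tilde{P}_{k}$ has a kernel $\tilde{K}_{k}$ decaying faster than any polynomial, so the off-diagonal pieces $\breve{\eta}_{j_{0}}\tilde{P}_{k}(\eta_{j}V)$ with $\abs{j-j_{0}}\gg 1$ are controlled by the single bound $\abs{\breve{\eta}_{j_{0}}(y)\tilde{K}_{k}(y-y')\eta_{j}(y')}\aleq_{N}2^{k}2^{-N(\max\{j,j_{0}\}+k)}$ for arbitrary $N$; this is what produces the summable factors $2^{-c\abs{j-j_{0}}}$ in the paper's key intermediate estimates. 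Your approach, relying only on $\abs{K_{s}(z)}\aleq\abs{z}^{-m}$, gives merely polynomial off-diagonal decay, and this is not enough in the outer regime: with $\abs{y}\sim 2^{j_{0}}$, $\abs{y'}\sim 2^{p}\gg 2^{j_{0}}$, your Cauchy--Schwarz estimate yields a contribution per shell of order $2^{(\nu+\ell+\frac12)j_{0}}2^{(\frac12-m-\nu)p}$, which is summable in $p$ only when $\nu>\tfrac12-m$; for small $m$ (e.g.\ $m=0$) and $\nu\leq 0$ this diverges. One can repair this by integrating by parts $\ell+1$ times to put all the derivatives on $K_{s}$, but that requires $\abs{\rd_{z}^{\ell+1}K_{s}(z)}\aleq\abs{z}^{-m-\ell-1}$, which follows from the symbol hypotheses but is \emph{not} contained in Lemma~\ref{lem:mult-ker} (that lemma stops at one derivative), contrary to your statement that Lemma~\ref{lem:mult-ker} is the only analytic input.

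A second, smaller issue: in Case~2 the weight $\varpi$ is supported in $\{\abs{y}\gtrsim 2^{j_{0}}\}$ with no upper bound on $\abs{y}$, so your inner-regime claim $\abs{y-y'}\sim 2^{j_{0}}$ fails there; the paper handles this by a further spatial splitting of $\breve{\eta}_{\geq j_{0}}$ and a separate ``near/far'' dichotomy, which you would need to mirror. Your commutator treatment via the kernel and the mean-value bound on $\varpi$ is, by contrast, a clean alternative to the paper's Littlewood--Paley commutator $[\breve{\eta}_{j_{0}},\tilde{P}_{k}]$ plus Schur argument, and would go through once the outer-regime issue is fixed.
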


The range $-\frac{1}{2} < \nu < \frac{1}{2}$ is sharp. In the proof of Lemma~\ref{lem:weighted2} below, this lemma will be applied with $V = U'$ and $\nu = \frac{1}{2} - \frac{1}{2k+1}$; indeed, observe that $\nrm{U}_{\dot{\calH}_{<L}^{n}} = \nrm{U'}_{\dot{\calH}_{<L}^{n-1, \frac{1}{2} - \frac{1}{2k+1}}}$ for $n \geq 1$.

\begin{remark}
We note that while \eqref{eq:calH-w} and \eqref{eq:calH-comm-w} are sharp in terms of the spatial weights, it is not sharp in terms of regularity, as we are only working with integer regularity indices. Indeed, the orders of the operators $\varpi \calH \rd_{y}^{\ell}$ and $[\varpi, \calH] \rd_{y}^{\ell}$ are $\alp + \ell$ and $\alp + \ell - 1$, respectively, while we are using $\ell+1$ and $\ell$ derivatives on the RHS, respectively (recall that $0 \leq \alp < 1$). A crucial point, however, is that the RHS of the commutator estimate, \eqref{eq:calH-comm-w}, involves at most $\ell$ derivatives, which is important for avoiding any loss of derivatives in Lemma~\ref{lem:weighted2}. 

Another important point is that \eqref{eq:calH-w} and \eqref{eq:calH-comm-w} are independent of $s$ and $L$. In particular, the only $s$-independent information we have on the symbol of $\calH$ are the scale-invariant bounds 
$$
\abs{\rd_{\xi_{y}}^{N} \calH(\xi_{y})} \aleq_{N} \abs{\xi_{y}}^{\max\set{\alp, \bt, 0}-N},
$$
which are essentially all we use about $\calH$.
 \end{remark}

\begin{proof}
Without loss of generality, assume $\alp = \bt \geq 0$. In what follows, we suppress the dependence of implicit constants on $\alp$, $\nu$, $c_{0}$ and $C_{0}$. In what follows, we simply write $P_{k} = P_{k}(D_{y})$ ($k \in \bbZ$). 

To simplify the notation, we introduce the following schematic notation: We denote by $\tilde{P}_{k}$ (resp.~$\tilde{K}_{k_{0}}$) any function, which may vary from expression to expression, that obeys the same support properties and bounds as $P_{k}$ (at the level of the symbol) (resp.~$K_{k}$), i.e., $\supp \tilde{P}_{k} \subseteq \set{\xi \in \bbR : 2^{k-5} < \abs{\xi} < 2^{k+5}}$ and $\abs{(\xi \rd_{\xi})^{n} \tilde{P}_{k}(\xi)} \aleq_{n} 1$ (resp.~$\abs{\rd_{y}^{n} \tilde{K}_{k}(y)} \aleq_{N, n} \frac{2^{(1+n)k}}{\brk{2^{k} y}^{n+N}}$). 

With the above conventions, we have the schematic identities $P_{k} = \tilde{P}_{k}$ and
\begin{equation*}
	\calH = \sum_{k} \calH P_{k}  = \sum_{k} 2^{\alp k} \tilde{P}_{k},
\end{equation*}
where an important point in the last identity is that the implicit bounds for $\tilde{P}_{k}$ are \emph{independent} of $s$. Note also that any operator of the form $\tilde{P}_{k}$ has a kernel of the form $\tilde{P}_{k} V = \tilde{K}_{k} \ast V$.

Next, we introduce a nonnegative smooth partition of unity $\set{\eta_{j}}_{j \in \bbZ}$ on $\bbR$ subordinate to the open cover $\set{A_{j} = \set{y \in \bbR : 2^{j-3} < \abs{y} < 2^{j+2}}}_{j \in \bbZ}$. We shall write $\eta_{\geq j} = \sum_{j' \geq j} \eta_{j'}$. We also introduce the shorthands
\begin{equation*}
	\breve{\eta}_{j_{0}} = 2^{-(\nu + \ell) j_{0}} \varpi \hbox{ in Case~1,} \qquad
	\breve{\eta}_{\geq j_{0}} = 2^{-(\nu + \ell) j_{0}} \varpi \hbox{ in Case 2}.
\end{equation*}
As the notation suggests, $\breve{\eta}_{j_{0}}$ and $\breve{\eta}_{\geq j_{0}}$ have similar support and upper bound properties as $\eta_{j_{0}}$ and $\eta_{\geq j_{0}}$, respectively, thanks to the hypothesis on $\varpi$. However, note that we only have control of up to one derivative of $\breve{\eta}_{j_{0}}$ and $\breve{\eta}_{\geq j_{0}}$.

\smallskip
\noindent {\it Case~1, Step~1.}
We will use the following three bounds to treat the ``non-local'', ``low frequency'' and ``far-away input'' cases, respectively: for $\abs{j - j_{0}} > c_{0}+5$ and $k \geq - j_{0} - 5$,
\begin{align}
	2^{(\nu + \ell) j_{0}} 2^{(\alp + \ell) k}  \nrm{\breve{\eta}_{j_{0}} \tilde{P}_{k} (\eta_{j} V)}_{L^{2}} &\aleq 2^{-\alp j_{0}} 2^{-c \abs{j_{0} + k}} 2^{-c \abs{j - j_{0}}} \nrm{V}_{\dot{\calH}_{<L}^{0, \nu}}, \label{eq:w-Pk-high}
\end{align}
and for $k < - j_{0} - 5$,
\begin{align}
	2^{(\nu + \ell) j_{0}} 2^{(\alp + \ell) k}  \nrm{\breve{\eta}_{j_{0}} \tilde{P}_{k} (\eta_{j} V)}_{L^{2}} &\aleq 2^{-\alp j_{0}} 2^{-c \abs{j_{0} + k}} 2^{-c \abs{j + k}} \nrm{V}_{\dot{\calH}_{<L}^{0, \nu}}, \label{eq:w-Pk-low}
\end{align}
and
\begin{align}
	2^{(\nu + \ell) j_{0}} 2^{(\alp + \ell) k}  \nrm{\breve{\eta}_{j_{0}} \tilde{P}_{k} (\eta_{\geq \log_{2} L} V)}_{L^{2}} &\aleq 2^{-\alp j_{0}} 2^{-c \abs{j_{0} + k}} 2^{-c \abs{\log_{2} L - j_{0}}} \nrm{V}_{\dot{\calH}_{<L}^{0, \nu}}, \label{eq:w-Pk-far}
\end{align}
We defer their proofs for a moment and prove \eqref{eq:calH-w} and \eqref{eq:calH-comm-w} assuming \eqref{eq:w-Pk-high}--\eqref{eq:w-Pk-far}. 

\smallskip
\noindent {\it Case~1, Step~1.(a).}
To prove \eqref{eq:calH-w}, we begin by expanding
\begin{equation*}
	\varpi \calH \rd_{y}^{\ell} V
	= \sum_{j, k : 2^{j} < L} 2^{(\nu + \ell) j_{0}} 2^{(\alp + \ell) k} \breve{\eta}_{j_{0}} \tilde{P}_{k} (\eta_{j} V)
	+ \sum_{k} 2^{(\nu + \ell) j_{0}} 2^{(\alp + \ell) k} \breve{\eta}_{j_{0}} \tilde{P}_{k} (\eta_{\geq \log_{2} L} V) 
	=: \mathrm{I}_{near} + \mathrm{I}_{far}.
\end{equation*}
The term $\mathrm{I}_{far}$ can be treated using \eqref{eq:w-Pk-far}, so it only remains to estimate $\mathrm{I}_{near}$. Unless $\abs{j - j_{0}} \leq c_{0} + 5$ and $k \geq - j_{0} - 5$, we can apply \eqref{eq:w-Pk-high} and \eqref{eq:w-Pk-low} to obtain an acceptable bound for the summand. When $\abs{j - j_{0}} \leq c_{0} + 5$ and $k \geq - j_{0} - 5$, we use the schematic identity $\tilde{P}_{k} = 2^{-(\ell+1) k} \tilde{P}_{k} \rd_{x}^{\ell+1}$ to simply bound
\begin{align*}
2^{(\nu + \ell) j_{0}} 2^{(\alp + \ell) k} \nrm{\breve{\eta}_{j_{0}} \tilde{P}_{k} (\eta_{j} V)}_{L^{2}}
& \aleq 2^{- \alp j_{0}} 2^{- (1-\alp) (j_{0}+k)} (\nrm{V}_{\dot{\calH}_{<L}^{0, \nu}} + \nrm{V}_{\dot{\calH}_{<L}^{\ell+1, \nu}}),
\end{align*}
which can be summed up in $k \geq -j_{0} + 5$.	

\smallskip
\noindent {\it Case~1, Step~1.(b).} 
Now we prove the commutator estimate \eqref{eq:calH-comm-w}. We begin by making the following decomposition:
\begin{align}
[\breve{\eta}_{j_{0}}, \calH] \rd_{y}^{\ell} V 
&= \sum_{k \geq -j_{0}-5} 2^{(\nu + \ell) j_{0}} 2^{\alp k} [\breve{\eta}_{j_{0}}, \tilde{P}_{k}] \rd_{y}^{\ell}  V \notag
+ \sum_{k < -j_{0}-5} 2^{(\nu + \ell) j_{0}} 2^{\alp k} \breve{\eta}_{j_{0}} \tilde{P}_{k} \rd_{y}^{\ell} V \notag \\
&\peq + \sum_{k < -j_{0}-5} 2^{(\nu + \ell) j_{0}} 2^{\alp k} \tilde{P}_{k} ( \breve{\eta}_{j_{0}} \rd_{y}^{\ell} V) \notag \\
&=: \mathrm{I} + \mathrm{II} + \mathrm{III}. \label{eq:calH-comm-w-decomp}
\end{align}

We treat each term in \eqref{eq:calH-comm-w-decomp} as follows. For $\mathrm{I}$, we start by writing
\begin{equation} \label{eq:calH-comm-w-1}
\begin{aligned}
\mathrm{I}
&= \sum_{j , k : \abs{j - j_{0}} \leq c_{0} + 5,\, k \geq -j_{0}-5} 2^{(\nu + \ell) j_{0}} 2^{\alp k} [\breve{\eta}_{j_{0}}, \tilde{P}_{k}] \rd_{y}^{\ell}  (\eta_{j} V) \\
&\peq + \sum_{j , k : \abs{j - j_{0}} > c_{0} + 5,\, k \geq -j_{0}-5} 2^{(\nu + \ell) j_{0}} 2^{\alp k} [\breve{\eta}_{j_{0}}, \tilde{P}_{k}] \rd_{y}^{\ell}  (\eta_{j} V).
\end{aligned}
\end{equation}
To treat the first sum on the RHS of \eqref{eq:calH-comm-w-1}, we make use of the commutator structure. We write
\begin{align*}
[\breve{\eta}_{j_{0}}, \tilde{P}_{k_{0}}] \tilde{V}
&= \int \tilde{K}_{k_{0}}(y-y') (\breve{\eta}_{j_{0}}(y) - \breve{\eta}_{j_{0}}(y')) \tilde{V}(y') \, \ud y',
\end{align*}
where $\tilde{V} = \rd_{y}^{\ell}  (\eta_{j} V)$. Then using the bound for $\breve{\eta}_{j_{0}}'$ (which comes from that for $\varpi'$) and the $O(2^{k_{0}})$-localization of $\tilde{K}_{k_{0}}$, the kernel on the RHS can be written as $2^{-k_{0}-j_{0}} \breve{K}(y, y')$, where $\sup_{y} \nrm{\breve{K}(y, \cdot)}_{L^{1}}$ and $\sup_{y'} \nrm{\breve{K}(\cdot, y')}_{L^{1}}$ are bounded by an absolute constant. Hence, by Schur's test, 
\begin{align*}
\nrm{2^{(\nu + \ell) j_{0}} 2^{\alp k} [\breve{\eta}_{j_{0}}, \tilde{P}_{k}] \rd_{y}^{\ell}  (\eta_{j} V)}_{L^{2}} 
& \aleq 2^{(-\alp + \nu + \ell) j_{0}} 2^{-(1-\alp)(j_{0}+ k)} \nrm{\rd_{y}^{\ell}(\eta_{j} V)}_{L^{2}} \\
& \aleq 2^{-\alp j_{0}} 2^{-(1-\alp)(j_{0}+ k)} (\nrm{V}_{\dot{\calH}_{<L}^{0, \nu}} + \nrm{V}_{\dot{\calH}_{<L}^{\ell, \nu}}),
\end{align*}
which is summable in the range $\set{(j, k) : \abs{j - j_{0}} \leq c_{0} + 5,\, k \geq -j_{0}-5}$.

For the second sum on the RHS of \eqref{eq:calH-comm-w-1}, we simply note that $[\breve{\eta}_{j_{0}}, \tilde{P}_{k_{0}}] \rd_{y}^{\ell} (\eta_{j} V) = \breve{\eta}_{j_{0}} \tilde{P}_{k_{0}} (\eta_{j} V)$ by the support properties of $\breve{\eta}_{j_{0}}$ and $\eta_{j}$. Hence, we may apply \eqref{eq:w-Pk-high}, which is acceptable in the range $\set{(j, k) : \abs{j - j_{0}} > c_{0} + 5,\, k \geq -j_{0}-5}$.

Next, the term $\mathrm{II}$ in \eqref{eq:calH-comm-w-decomp} is directly bounded using \eqref{eq:w-Pk-low}.

Finally, we turn to the term $\mathrm{III}$ in \eqref{eq:calH-comm-w-decomp}. We write
\begin{align*}
\sum_{k < -j_{0}-5} 2^{(\nu + \ell) j_{0}} 2^{\alp k} \tilde{P}_{k} ( \breve{\eta}_{j_{0}} \rd_{y}^{\ell} V)
= \sum_{j, k : k < -j_{0}-5} 2^{(\nu + \ell) j_{0}} 2^{\alp k} \tilde{P}_{k} ( \breve{\eta}_{j_{0}} \rd_{y}^{\ell} (\eta_{j} V))
\end{align*}
By the support properties of $\breve{\eta}_{j_{0}}$ and $\eta_{j}$, the summand vanishes unless $\abs{j_{0} - j} \leq c_{0} + 5$. In that case, we have
\begin{align*}
	2^{(\nu + \ell) j_{0}} 2^{\alp k} \nrm{\tilde{P}_{k} (\breve{\eta}_{j_{0}} \rd_{y}^{\ell}(\eta_{j} V))}_{L^{2}}
	&\aleq 2^{(\nu + \ell +\frac{1}{2}) j_{0}} 2^{(\alp+\frac{1}{2}) k} \nrm{\rd_{y}^{\ell}(\eta_{j} V)}_{L^{2}} \\
	&\aleq 2^{\frac{1}{2} j_{0}} 2^{(\alp+\frac{1}{2}) k} (\nrm{V}_{\dot{\calH}_{<L}^{0, \nu}}+ \nrm{V}_{\dot{\calH}_{<L}^{\ell, \nu}}),
\end{align*}
where on the first line we used the $L^{1} \hookrightarrow L^{2}$ Bernstein inequality and $\nrm{\breve{\eta}_{j_{0}}}_{L^{2}} \aleq 2^{\frac{1}{2} j_{0}}$. The above bound is acceptable in the range $\set{(j, k) : \abs{j - j_{0}} \leq c_{0} + 5,\, k < -j_{0}-5}$.

\smallskip
\noindent {\it Case 1, Step~2.}
It remains to prove \eqref{eq:w-Pk-high}, \eqref{eq:w-Pk-low}, and \eqref{eq:w-Pk-far}. We start with the following bound for the kernel $\tilde{K}_{k}$ of $\tilde{P}_{k}$: for $\abs{j - j_{0}} > c_{0} + 5$ and any $N \geq 0$,
\begin{equation} \label{eq:Kk}
	\abs{\breve{\eta}_{j_{0}}(y) \tilde{K}_{k}(y - y') \eta_{j}(y)} \aleq_{N} 2^{k} 2^{-N(\max\set{j, j_{0}} + k)}. 
\end{equation}
Indeed, \eqref{eq:Kk} follows from the bound for $\tilde{K}_{k}$ and the simple fact that $\abs{y - y'} \aeq 2^{\max\set{j, j_{0}}}$ if $\abs{y} \aeq 2^{j_{0}}$, $\abs{y'} \aeq 2^{j}$ and $\abs{j - j_{0}} > c_{0} + 5$. 

As a result, we have $\nrm{\breve{\eta}_{j_{0}} \tilde{P}_{k} (\eta_{j} V)}_{L^{\infty}} \aleq_{N} 2^{k} 2^{-N(\max\set{j, j_{0}} + k)} \nrm{\eta_{j} V}_{L^{1}}$. By two applications of H\"older's inequality, as well as $\nrm{\eta_{j} V}_{L^{2}} \aleq 2^{-\nu j} \nrm{V}_{\dot{\calH}_{<L}^{0, \nu}}$, we have
\begin{align*}
	2^{(\nu + \ell) j_{0}} 2^{(\alp + \ell) k}  \nrm{\breve{\eta}_{j_{0}} \tilde{P}_{k}(\eta_{j} V)}_{L^{2}}
	&\aleq_{N} 2^{-\alp j_{0}} 2^{(\nu + \frac{1}{2} + \alp + \ell) (j_{0} + k)} 2^{(-\nu+\frac{1}{2}) (j+k)} 2^{-N(\max \set{j, j_{0}} + k)} \nrm{V}_{\dot{\calH}_{<L}^{0, \nu}}.
\end{align*}
By choosing $N$ to be appropriately large, \eqref{eq:w-Pk-high} and \eqref{eq:w-Pk-low} in the case $j > - k + C_{0}$ follow (note that in the last case, $j_{0} < - k - 5$, so $j > j_{0} + C_{0} + 5$). To treat the remaining cases in \eqref{eq:w-Pk-low}, namely $k < - j_{0} -5$ and $j \leq -k + C_{0}$, we simply use the H\"older and Bernstein inequalities to estimate
\begin{align*}
	2^{(\nu + \ell) j_{0}} 2^{(\alp + \ell) k}  \nrm{\breve{\eta}_{j_{0}} \tilde{P}_{k}(\eta_{j} V)}_{L^{2}}
	&\aleq 2^{(\nu + \frac{1}{2} + \ell) j_{0}} 2^{(\alp + \ell) k}  \nrm{P_{k} (\eta_{j} V)}_{L^{\infty}} 
	\aleq 2^{(\nu + \frac{1}{2} + \ell) j_{0}} 2^{(\alp + 1 + \ell) k}  \nrm{\eta_{j} V}_{L^{1}} \\
	&\aleq 2^{-\alp j_{0}} 2^{(\nu + \frac{1}{2} + \alp + \ell) (j_{0} + k)} 2^{(-\nu +\frac{1}{2}) (j+k)} \nrm{V}_{\dot{\calH}_{<L}^{0, \nu}}.
\end{align*}
Finally, to prove \eqref{eq:w-Pk-far}, we first split
\begin{equation*}
\eta_{\geq \log_{2} L} V = \sum_{j : \log_{2} L \leq j < \log_{2} L + c_{0}+10} \eta_{j} V + \eta_{\geq \log_{2} L + c_{0} + 10} V.
\end{equation*}
Observe that the contribution of the first term can be treated using \eqref{eq:w-Pk-high} and \eqref{eq:w-Pk-low}. For the remaining piece, thanks to the spatial separation between the supports of $\breve{\eta}_{j_{0}}$ and $\eta_{\geq \log_{2} L + c_{0} + 10}$, \eqref{eq:Kk} implies $\nrm{\breve{\eta}_{j_{0}} \tilde{P}_{k} (\eta_{\geq \log_{2} L + c_{0} + 10} V)}_{L^{\infty}} \aleq_{N} 2^{\frac{1}{2} k} 2^{-\frac{N}{2} (\log_{2} L + k)} \nrm{\eta_{\geq \log_{2} L + c_{0} + 10} V}_{L^{2}}$. Hence, by H\"older's inequality, 
\begin{align*}
	2^{(\nu + \ell) j_{0}} 2^{(\alp + \ell) k}  \nrm{\breve{\eta}_{j_{0}} \tilde{P}_{k}(\eta_{\geq \log_{2} L + c_{0} + 10} V)}_{L^{2}}
	&\aleq_{N} 2^{-\alp j_{0}} 2^{(\nu + \frac{1}{2} + \alp + \ell) (j_{0} + k)} 2^{-(\frac{N}{2} + \nu)(\log_{2} L + k)} \nrm{V}_{\dot{\calH}_{<L}^{0, \nu}}.
\end{align*}
By choosing $N$ appropriately, \eqref{eq:w-Pk-far} follows.

\smallskip
\noindent {\it Case 2, Step~1.}
In this case, $L \aeq_{c_{0}} 2^{j_{0}}$. We will use the following bound to treat the ``nearby input'' case: for $j < \log_{2} L$,
\begin{align}
	2^{(\nu + \ell) j_{0}} 2^{(\alp + \ell) k}  \nrm{\breve{\eta}_{\geq j_{0}} \tilde{P}_{k} (\eta_{j} V)}_{L^{2}} &\aleq 2^{-\alp j_{0}} 2^{-c \abs{j + k}} 2^{-c \abs{j_{0} - j}} \nrm{V}_{\dot{\calH}_{<L}^{0, \nu}}. \label{eq:w-Pk-near}
\end{align}
We defer their proofs for a moment and prove \eqref{eq:calH-w} and \eqref{eq:calH-comm-w} assuming \eqref{eq:w-Pk-near}.

\smallskip
\noindent {\it Case~2, Step~1.(a).}
As before, to prove \eqref{eq:calH-w}, we expand
\begin{equation*}
	\varpi \calH \rd_{y}^{\ell} V
	= \sum_{j, k : 2^{j} < L} 2^{(\nu + \ell) j_{0}} 2^{(\alp + \ell) k} \breve{\eta}_{\geq j_{0}} \tilde{P}_{k} (\eta_{j} V)
	+ \sum_{k} 2^{(\nu + \ell) j_{0}} 2^{(\alp + \ell) k} \breve{\eta}_{\geq j_{0}} \tilde{P}_{k} (\eta_{\geq \log_{2} L} V) 
	=: \mathrm{I}_{near} + \mathrm{I}_{far}.
\end{equation*}
This time, the term $\mathrm{I}_{near}$ can be treated using \eqref{eq:w-Pk-near}, so it only remains to estimate $\mathrm{I}_{far}$. By almost orthogonality (in case $\alp + \ell= 0$) or interpolation (in case $\alp + \ell > 0$), it is straightforward to prove
\begin{align*}
\nrm{\sum_{k} 2^{(\nu + \ell) j_{0}} 2^{(\alp + \ell) k} \breve{\eta}_{\geq j_{0}} \tilde{P}_{k} (\eta_{\geq \log_{2} L} V)}_{L^{2}}
\aleq 2^{(\nu + \ell) j_{0}} \nrm{\eta_{\geq \log_{2}} V}_{L^{2}}^{\frac{1 -\alp}{\ell+1}} \nrm{\rd_{y}^{\ell+1} (\eta_{\geq \log_{2}} V)}_{L^{2}}^{\frac{\ell+\alp}{\ell+1}},
\end{align*}
which is acceptable.

\smallskip
\noindent {\it Case~2, Step~1.(b).}
To prove \eqref{eq:calH-comm-w}, we expand
\begin{align*}
	[\varpi, \calH] \rd_{y}^{\ell} V
	&= \sum_{k \geq - j_{0} - 5} 2^{(\nu + \ell) j_{0}} 2^{\alp k} [\breve{\eta}_{\geq j_{0}}, \tilde{P}_{k}] \rd_{y}^{\ell} (\eta_{\geq \log_{2} L - c_{0} - 10} V) \\
	&\peq + \sum_{k < - j_{0} -5} 2^{(\nu + \ell) j_{0}} 2^{\alp k} [\breve{\eta}_{\geq j_{0}}, \tilde{P}_{k}] \rd_{y}^{\ell} (\eta_{\geq \log_{2} L - c_{0} - 10} V)  \\
	&\peq + \sum_{j, k : j < \log_{2} L - c_{0} - 10} 2^{(\nu + \ell) j_{0}} 2^{\alp k} [\breve{\eta}_{\geq j_{0}}, \tilde{P}_{k}] \rd_{y}^{\ell} (\eta_{j} V) 
	=: \mathrm{I}' + \mathrm{II}' + \mathrm{III}'.
\end{align*}

For $\mathrm{I}'$, we argue as in term $\mathrm{I}$ in Case~1, Step~1.(b) using the commutator structure and bound
\begin{align*}
	\nrm{\mathrm{I}'}_{L^{2}}
	\aleq 2^{-\alp j_{0}} \sum_{k \geq -j_{0} - 5} 2^{(\nu+\ell) j_{0}} 2^{-(1-\alp) (j_{0}+k)} \nrm{\rd_{y}^{\ell} (\eta_{\geq \log_{2} L - c_{0} - 10} V)}_{L^{2}} 
	\aleq 2^{-\alp j_{0}} (\nrm{V}_{\dot{\calH}_{<L}^{0, \nu}} + \nrm{V}_{\dot{\calH}_{<L}^{\ell, \nu}}).
\end{align*}

For $\mathrm{II}'$, we expand the commutator expression and write
\begin{align*}
\mathrm{II}'
&= \sum_{k < - j_{0} -5} 2^{(\nu + \ell) j_{0}} 2^{\alp k} \breve{\eta}_{\geq j_{0}} \tilde{P}_{k} \rd_{y}^{\ell} (\eta_{\geq \log_{2} L - c_{0} - 10} V) 
- \sum_{k < - j_{0} -5} 2^{(\nu + \ell) j_{0}} 2^{\alp k} \tilde{P}_{k}(\breve{\eta}_{\geq j_{0}} \rd_{y}^{\ell} (\eta_{\geq \log_{2} L - c_{0} - 10} V) ).
\end{align*}
We simply bound $2^{\alp k} \aleq 2^{- \alp j_{0}}$, then use almost orthogonality to estimate
\begin{equation*}
	\nrm{\mathrm{II}'}_{L^{2}}
	\aleq 2^{- \alp j_{0}} 2^{(\nu + \ell) j_{0}} \nrm{\rd_{y}^{\ell} (\eta_{\geq \log_{2} L - c_{0} - 10} V)}_{L^{2}}
	\aleq 2^{- \alp j_{0}} (\nrm{V}_{\dot{\calH}_{<L}^{0, \nu}} + \nrm{V}_{\dot{\calH}_{<L}^{\ell, \nu}}).
\end{equation*}

For $\mathrm{III}'$, we simply observe that, by the support properties of $\breve{\eta}_{\geq j_{0}}$ and $\eta_{j}$,
\begin{equation*}
	\mathrm{III}' = \sum_{j, k : j < \log_{2} L - c_{0} - 10} 2^{(\nu + \ell) j_{0}} 2^{\alp k} \breve{\eta}_{\geq j_{0}} \tilde{P}_{k} \rd_{y}^{\ell} (\eta_{j} V),
\end{equation*}
which can be treated using \eqref{eq:w-Pk-near}.

\smallskip
\noindent {\it Case 2, Step~2.}
It remains to prove \eqref{eq:w-Pk-near}. Recall that $L \aeq_{c_{0}} 2^{j_{0}}$ and $j < \log_{2} L$. We first split
\begin{equation*}
\breve{\eta}_{\geq j_{0}} = \sum_{j : \log_{2} L \leq j < \log_{2} L + c_{0}+10} \eta_{j} \breve{\eta}_{\geq j_{0}} + \eta_{\geq \log_{2} L + c_{0} + 10} \breve{\eta}_{\geq j_{0}}.
\end{equation*}
Each summand in the first sum obeys the same properties as $\breve{\eta}_{j}$, so its contribution may be treated by \eqref{eq:w-Pk-high} and \eqref{eq:w-Pk-low}. Let us abbreviate the second term by $\breve{\eta}_{\geq \log_{2} L + c_{0} + 10}$. Thanks to the spatial separation property, \eqref{eq:Kk} implies $\nrm{\breve{\eta}_{\geq \log_{2} L + c_{0} + 10} \tilde{P}_{k} (\eta_{j} V)}_{L^{2}} \aleq_{N} 2^{\frac{1}{2} k} 2^{-\frac{N}{2} (j_{0} + k)} \nrm{\eta_{j} V}_{L^{\infty}}$. Hence, by H\"older's inequality,
\begin{align*}
	2^{(\nu + \ell) j_{0}} 2^{(\alp + \ell) k}  \nrm{\breve{\eta}_{\geq \log_{2} L + c_{0} + 10} \tilde{P}_{k}(\eta_{j} V)}_{L^{2}}
	&\aleq_{N} 2^{-\alp j_{0}} 2^{(\nu + \alp + \ell - \frac{N}{2}) (j_{0} + k)} 2^{(-\nu + \frac{1}{2})(j + k)} \nrm{V}_{\dot{\calH}_{<L}^{0, \nu}}.
\end{align*}
By choosing $N$ appropriately, \eqref{eq:w-Pk-near} follows. \qedhere
\end{proof}

\section{Estimates on \texorpdfstring{$U$}{U}} \label{sec:Uest}
In this section, we improve the bootstrap assumptions in Lemma~\ref{lem:main} that involve $U$.

\subsection{Non-top-order forcing term estimates}
To prepare for the ensuing analysis, we establish some bounds for the forcing term involving $\calH$ and $\calL$.
\begin{lemma} \label{lem:forcing}
Assume the hypotheses of Lemma~\ref{lem:main}. Then the following $L^{2}$ bounds hold:
\begin{align} 
e^{-\mu s} \nrm*{\rd_{y}^{(j)} \calH(U)(s, \cdot)}_{L^{2}} &\leq C A e^{-\mu s} & \hbox{for } 1 \leq j \leq 2k+2, \label{eq:forcing-H-j-L2}\\
e^{- s}  \nrm*{\rd_{y}^{(j)} \calL\left(U+e^{(b-1)s} \kpp\right)}_{L^{2}} 
&\leq C (1+\kpp_{0}) e^{-(2+(j-\frac{3}{2})b) s}  & \hbox{for } j \geq 0. \label{eq:forcing-L-j-L2}
\end{align}
Moreover, the following pointwise bounds hold:
\begin{align}
e^{- \mu s} \nrm*{\calH(U)(s, \cdot)}_{L^{\infty}(-4, 4)} &\leq C (A+\kpp_{0}) e^{-\mu_{0} s}, \label{eq:forcing-H-0} \\
e^{-\mu s} \nrm*{\rd_{y}^{(j)} \calH(U)(s, \cdot)}_{L^{\infty}} &\leq C A e^{-\mu s} & \hbox{for } 1 \leq j \leq 2k+1, \label{eq:forcing-H-j}\\
e^{- s}  \nrm*{\rd_{y}^{(j)} \calL\left(U+e^{(b-1)s} \kpp\right)}_{L^{\infty}} 
&\leq C (1+\kpp_{0}) e^{-(2+(j-1)b) s}  & \hbox{for } j \geq 0. \label{eq:forcing-L-j}
\end{align}
\end{lemma}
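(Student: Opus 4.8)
The estimates for $\calL$, namely \eqref{eq:forcing-L-j-L2} and \eqref{eq:forcing-L-j}, are the easiest: they follow directly by applying the $L^2$ and $L^\infty$ bounds \eqref{eq:calL-L2}--\eqref{eq:calL-Linfty} from Lemma~\ref{lem:mult-L2Linfty} to $V = U + e^{(b-1)s}\kpp$, together with the $L^2$ bound on $U + e^{(b-1)s}\kpp$ coming from Lemma~\ref{lem:UL2}, i.e.\ $\nrm{U + e^{(b-1)s}\kpp}_{L^2_y} \le C e^{(\frac32 b - 1)s}(1+\kpp_0)$. Thus $e^{-s}\nrm{\rd_y^{(j)}\calL(\cdots)}_{L^2} \aleq e^{-s} e^{-jbs} e^{(\frac32 b - 1)s}(1+\kpp_0) = C(1+\kpp_0) e^{-(2+(j-\frac32)b)s}$, which is precisely \eqref{eq:forcing-L-j-L2}; the pointwise version \eqref{eq:forcing-L-j} comes from \eqref{eq:calL-Linfty} in the same way, losing the extra $\frac12 b$ in the exponent. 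The only care needed is that $\calL$ carries the low-frequency projection $P_{\leq 0}(e^{bs}D_y)$, but since $\calL(e^{(b-1)s}\kpp)$ involves only the constant, which sits at frequency zero, it is cleanly handled by the hypotheses on $\Gmm, \Ups$ being bounded near the origin — so the bound applies verbatim to $V = U + e^{(b-1)s}\kpp$.

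For the $\calH$-estimates involving derivatives, \eqref{eq:forcing-H-j-L2} and \eqref{eq:forcing-H-j}, I would first observe that $\calH$ commutes with $\rd_y$, so $\rd_y^{(j)}\calH(U) = \calH(\rd_y^{(j)} U)= \calH(U^{(j)})$, and then apply \eqref{eq:calH-L2} (resp.\ \eqref{eq:calH-Linfty}) from Lemma~\ref{lem:mult-L2Linfty}: $\nrm{\calH(U^{(j)})}_{L^2} \aleq \nrm{U^{(j)}}_{L^2}^{1-\alp}\nrm{U^{(j+1)}}_{L^2}^{\alp}$ (writing $\alp = \max\set{\alp,\bt,0}$). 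The right-hand side must then be controlled by $CA$ uniformly in $s$; this is where the bootstrap assumptions enter. For $1 \le j \le 2k+2$, the weighted-Sobolev bootstrap bounds \eqref{eq:bst-w-1}, \eqref{eq:bst-w-high} together with Lemma~\ref{lem:btstrap-Linfty}, as well as the top-order bound \eqref{eq:bst2} and interpolation (using $U(s,0)=0$ and $U'(s,0)=-1$ to anchor low derivatives) yield $\nrm{U^{(j)}}_{L^2} + \nrm{U^{(j+1)}}_{L^2} \aleq A$ for all such $j$ — one wants to avoid the weight blow-up at $y=0$ by noting $\dot{\calH}^n_{<L}$ with the power $\abs{y}^{n - \frac1{2k+1}}$ is a genuine (unweighted-at-the-origin-wise integrable) norm since $n \ge 1$. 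Then multiplying by $e^{-\mu s}$ gives the claimed $CA e^{-\mu s}$. The pointwise version \eqref{eq:forcing-H-j} for $1 \le j \le 2k+1$ is analogous, using \eqref{eq:calH-Linfty} and $L^\infty$ bounds from Lemma~\ref{lem:btstrap-Linfty} and the Gagliardo--Nirenberg/Sobolev embedding to pass from $L^2$ to $L^\infty$ controlled by $A$.

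The genuinely delicate estimate is \eqref{eq:forcing-H-0}, the $L^\infty$ bound for $\calH(U)$ itself (not its derivative) on the bounded region $(-4,4)$, with the \emph{improved} exponent $\mu_0$ rather than $\mu$. Here one cannot simply use \eqref{eq:calH-Linfty} because $U$ grows like $\abs{y}^{1/(2k+1)}$ at infinity and is not in $L^\infty$ globally (it behaves like the self-similar profile), so $\nrm{U}_{L^\infty}$ is not available. Instead I would use the kernel representation from Lemma~\ref{lem:mult-ker}: $\calH(U)(y) = \int K_s(y-y')\,U'(y')\,\ud y'$ with $\abs{K_s(y)} \aleq \abs{y}^{-\max\set{\alp,\bt,0}}$, and split the integral into $\abs{y'} \le 1$, $1 \le \abs{y'} \le e^{bs}$, and $\abs{y'} > e^{bs}$. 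On the near region use $\abs{U'}\aleq 1$; on the intermediate region use the sharp pointwise decay $\abs{U'(s,y')} \aleq A\abs{y'}^{-2k/(2k+1)}$ from \eqref{eq:U'-ptwise-sharp}, so that the integral of $\abs{y-y'}^{-\max\set{\alp,\bt,0}}\abs{y'}^{-2k/(2k+1)}$ over $1 \le \abs{y'}\le e^{bs}$ converges (or grows only polynomially/logarithmically in $e^{bs}$) provided $\max\set{\alp,\bt,0} < \frac{2k}{2k+1}$ — which is exactly our standing hypothesis — contributing a factor $\aleq A\, e^{bs(\max\set{\alp,\bt,0} - \frac1{2k+1})_+}$ or $O(A)$; on the far region use $\abs{U'}\aleq A e^{-s}$ together with the $L^2$ bound, or equivalently the kernel tail. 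The bookkeeping of the exponents here must be done carefully: the gain from $e^{-\mu s}$ in front is partially eaten by the polynomial growth of the spatial integral, and tracking this precisely is what produces $\mu_0 = \min\set{\mu, \frac{2k-1}{2k}}$, with the borderline case $\max\set{\alp,\bt} = \frac1{2k+1}$ (where the intermediate integral is logarithmically divergent) forced to be handled separately and losing an extra $\frac12\cdot\frac1{2k}$, giving $\mu_0 = \frac{2k-3/2}{2k}$ as in \eqref{eq:mu0-def}. This exponent-chasing, and correctly identifying which term in the three-region split is dominant, is the main obstacle; the rest is routine application of the Fourier-multiplier lemmas of Section~\ref{sec:fourier} and the bootstrap hypotheses.
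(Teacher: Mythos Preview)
Your treatment of the $\calL$ bounds and of the derivative-$\calH$ bounds \eqref{eq:forcing-H-j-L2}, \eqref{eq:forcing-H-j} is the same as the paper's: the paper also derives $\nrm{U'}_{L^{2}} \leq CA$ from \eqref{eq:bst1}--\eqref{eq:bst3} (near region) and \eqref{eq:bst-w-1} (far region), combines it with $\nrm{\rd_{y}^{2k+3} U}_{L^{2}} \leq 2A$ from \eqref{eq:bst2}, and invokes \eqref{eq:calH-L2}--\eqref{eq:calH-Linfty}.

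For \eqref{eq:forcing-H-0} your three-region split via the kernel representation of Lemma~\ref{lem:mult-ker} matches the paper, but the paper inserts one step you omit: after localizing to $\abs{y'} \gtrsim 8$, it \emph{integrates by parts} in $y'$, trading $K_{s}(y-y')\,U'(y')$ for a term controlled by $\abs{y'}^{-\alpha-1}\abs{U(y')}$ (using $\abs{y\,\rd_{y} K_{s}} \aleq \abs{y}^{-\alpha}$). In the intermediate region this is equivalent to your direct use of $\abs{U'} \aleq A\abs{y'}^{-2k/(2k+1)}$ --- both produce the integrand $\abs{y'}^{-\alpha-2k/(2k+1)}$ and hence the same exponent $\mu_{0}$. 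But in the far region $\abs{y'} > e^{bs}$ your plan has a gap: the bare pointwise bound $\abs{U'} \aleq A e^{-s}$ is useless since $\int_{\abs{y'}>e^{bs}} \abs{y'}^{-\alpha}\,\ud y'$ diverges for every $\alpha < 1$, and Cauchy--Schwarz against $\nrm{U'}_{L^{2}(\abs{y'}>e^{bs})}$ from \eqref{eq:bst-w-1} fails when $\alpha \leq \tfrac{1}{2}$ because then $\abs{y'}^{-\alpha} \notin L^{2}$ on that set. The paper's integration by parts fixes this uniformly in $\alpha$: the kernel becomes $\abs{y'}^{-\alpha-1} \in L^{2}(\abs{y'}>e^{bs})$ for all $\alpha \geq 0$, one freely inserts the constant $e^{(b-1)s}\kpp$, and Cauchy--Schwarz against $\nrm{U+e^{(b-1)s}\kpp}_{L^{2}}$ from Lemma~\ref{lem:UL2} closes the estimate with the contribution $e^{-(2k-1)s/(2k)}$. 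Your direct approach can be rescued by a H\"older argument interpolating the $L^{2}$ and $L^{\infty}$ bounds on $U'$ in the far region, but you have not indicated this, and the paper's route is cleaner.
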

These bounds will be useful for the proof of essentially all non-top-order estimates (with the sole exception of the weighted $L^{2}$-Sobolev bound \eqref{eq:ibst-w-1}). On the other hand, to estimate $\rd_{y}^{2k+3} U$, we shall rely instead on the dispersive/dissipative property of $\Gmm$/$\Ups$ and appropriate commutator estimates; see Lemmas~\ref{lem:high} and \ref{lem:weighted2} below.

\begin{proof}
Bounds \eqref{eq:forcing-L-j-L2} and \eqref{eq:forcing-L-j} for $e^{-s} \calL$ immediately follow by combining \eqref{eq:calL-L2} and \eqref{eq:calL-Linfty}, respectively, with Lemma~\ref{lem:UL2}. On the other hand, to prove \eqref{eq:forcing-H-j-L2} and \eqref{eq:forcing-H-j}, note that, by \eqref{eq:bst1}, \eqref{eq:bst3} (for $\abs{y} \leq 1$) and \eqref{eq:bst-w-1} (for $\abs{y} \geq 1$), we have $\nrm{U'}_{L^{\infty}} \leq C$ and 
\begin{equation} \label{eq:U'-L2}
\nrm{U'}_{L^{2}} \leq C A.
\end{equation}
Recall also that $\nrm{\rd_{y}^{2k+3} U}_{L^{2}} \leq 2 A$ by \eqref{eq:bst2}. Therefore, \eqref{eq:forcing-H-j-L2} and \eqref{eq:forcing-H-j} follow from \eqref{eq:calH-L2} and \eqref{eq:calH-Linfty}, respectively. 

To prove the remaining bound \eqref{eq:forcing-H-0}, we apply Lemma~\ref{lem:mult-ker}. By introducing a smooth partition of unity (in the variable $y'$) subordinate to $\set{\abs{ y' } < 16} \cup \set{8 <\abs{ y' } < 2 e^{bs}} \cup \set{e^{bs}<\abs{ y' }}$, we may estimate
\begin{align*}
e^{- \mu s} |\calH(U)&(s, y)|
\leq e^{-\mu s} \abs*{\int_{-\infty}^{\infty} K(y-y') \rd_{y} U(s, y') \, \ud y'} \\
&\aleq e^{-\mu s} \int_{\abs{y'} \leq 16} \abs{y-y'}^{-\max\set{\alp, \bt, 0}} \abs{\rd_{y} U(s, y')} \, \ud y'
+ e^{-\mu s} \int_{8 \leq \abs{y'} \leq 2 e^{b s}} \abs{y'}^{-\max\set{\alp, \bt, 0}-1} \abs{U(s, y')} \, \ud y'  \\
&\pleq + e^{-\mu s} \int_{\abs{y'} \geq e^{b s}} \abs{y'}^{-\max\set{\alp, \bt, 0}-1} \abs{U + e^{(b-1)s} \kpp}(s, y') \, \ud y' \\
&\aleq e^{-\mu s} \nrm{\rd_{y} U}_{L^{\infty}}
+ e^{-\mu s} \nrm{\abs{y}^{-\frac{1}{2k+1}} U}_{L^{\infty}(8, 2e^{bs})} \int_{8 \leq \abs{y'} \leq 2 e^{b s}} \abs{y'}^{-\max\set{\alp, \bt, 0}-1+\frac{1}{2k+1}}  \, \ud y'   \\
&\pleq + e^{-\mu s} e^{-(\frac{1}{2}+\max\set{\alp, \bt, 0}) b s} \nrm{U+e^{(b-1)s} \kpp}_{L^{2}} .
\end{align*}
In the second inequality, we used integration by parts and the property that $\abs{y - y'} \aeq \abs{y'}$ for $\abs{y'} \geq 8$ (since $\abs{y} \leq 4$).  When $\max\set{\alp, \bt, 0} \neq \frac{1}{2k+1}$, by \eqref{eq:bst1}, \eqref{eq:bst3}, \eqref{eq:bst-w-1}, \eqref{eq:U-ptwise-sharp} and \eqref{eq:U-L2},
\begin{align*}
e^{- \mu s} \abs*{\calH(U)(s, y)}
&\aleq e^{-\mu s} \left(1 + \max\set{1, e^{(\frac{1}{2k+1}- \max\set{\alp, \bt, 0}) bs}} + e^{-(\frac{1}{2}+\max\set{\alp, \bt, 0}) b s} e^{(\frac{3}{2} b -1) s} \right) \aleq e^{-\min \set{\mu, \frac{2k-1}{2k}} s}.
\end{align*}
Indeed, note that $\mu = 1 - \max\set{\alp, \bt, 0} b$. Therefore,
\begin{align*}
	-\mu + (\tfrac{1}{2k+1}  - \max\set{\alp, \bt, 0}) b
	&= - 1 + \max\set{\alp, \bt, 0} b + \tfrac{1}{2k} - \max\set{\alp, \bt, 0} b
	= - \tfrac{2k-1}{2k}, \\
	-\mu - (\tfrac{1}{2} + \max\set{\alp, \bt, 0}) b + (\tfrac{3}{2} b - 1)
	&= - 1 + \max\set{\alp, \bt, 0} b - \tfrac{1}{2} b + \max\set{\alp, \bt, 0} b + \tfrac{3}{2} b - 1 \\
	&= - 2 + b = \tfrac{-4k + 2k + 1}{2k} = - \tfrac{2k-1}{2k}.
\end{align*}
Since $\mu_{0} = \min\set{\mu, \frac{2k-1}{2k}}$ in this case, \eqref{eq:forcing-H-0} follows. On the other hand, in case $\max\set{\alp, \bt, 0} = \frac{1}{2k+1}$, the same computation applies except that the second term is bounded instead by $C A bs$; however, such a modification is acceptable since $0 < \mu_{0} < \frac{2k-1}{2k}$.
\end{proof}

\subsection{Estimates on \texorpdfstring{$\p_y U$}{U'}}
Next, we proceed to obtain pointwise estimates for the low derivative $\p_yU$ using the method of characteristics. It is important that the bounds proved below (in particular, items 3--4) are independent of $A$. On the other hand, we need not obtain sharp pointwise bounds for $\rd_{y} U$ at this point, as they would follow from the weighted $L^{2}$-Sobolev bounds \eqref{eq:ibst-w-1} and \eqref{eq:ibst-w-high} (cf.~derivation of \eqref{eq:U'-ptwise-sharp} in the proof of Theorem~\ref{thm:main}).

\begin{lemma}\label{lem:U1}
There exist $\eps_{0}$, $A$, $y_0$, $\gmm$, $\sgm_{0}$ such that if the initial data conditions~\eqref{eq:idu}--\eqref{eq:idw4} are satisfied and the bootstrap assumptions \eqref{eq:bst1}--\eqref{eq:bootstraplow2} hold for $s \in [\sgm_{0}, \sgm_{1}]$, then the following conclusions hold.
\begin{enumerate}
\item For all $s \in [\sgm_{0}, \sgm_{1}]$, and all $|y|> y_0$, we have $(by + (1+e^s\tau_s)U(s,y)) y > 0$ (the flow is repulsive).
\item \eqref{eq:ibst1} and \eqref{eq:ibst3} hold.
\item For $|y| \geq 4$ and $s \in [\sgm_{0}, \sgm_{1}]$, we have
\begin{equation}\label{eq:unifdissip}
{-}U'(s,y) \leq \frac 56.
\end{equation}
\item There exist $C, r > 0$ independent of $A, y_0, \gmm$ such that, for all $y \in \R$, and for all $s \in [\sgm_{0}, \sgm_{1}]$, we have
\begin{equation}\label{eq:decayup}
|\p_y U(s,y)| \leq C \max\set*{\frac{1}{(1+|y|)^r}, A e^{-r s}}.
\end{equation}
\end{enumerate}
\end{lemma}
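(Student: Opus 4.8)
The plan is to analyze the Lagrangian flow for $U' = \rd_y U$ generated by equation~\eqref{eq:uprimeintro} (more precisely, the modulated version~\eqref{eq:commutu-1}), treating the nonlocal term $e^{-\mu s}\calH(U')$ and the low-frequency term $e^{-s}\rd_y\calL(\cdots)$ as controlled perturbations via the forcing bounds of Lemma~\ref{lem:forcing} and the modulation bounds~\eqref{eq:bootstrapmod}. Writing $\td U'(s) = U'(s, Y(s))$ along the characteristic $\dot Y = (1+e^s\tau_s)U + by - e^{bs}\xi_s + (1+e^s\tau_s)e^{(b-1)s}\kpp$, the Lagrangian ODE reads schematically $\rd_s \td U' = -\td U'(\td U' + 1) + (1+e^s\tau_s)\bigl(e^{-\mu s}\calH(U') + e^{-s}\rd_y\calL(\cdots)\bigr)$, and the right-hand perturbation is bounded by $CA e^{-\mu_0 s}$ pointwise, uniformly in $y$, by~\eqref{eq:forcing-H-j} and~\eqref{eq:forcing-L-j} (with $j=1$), together with the initial-data assumption that $\sgm_0$ is large. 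I would first record this as the master Lagrangian identity.

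For item (2), i.e.\ \eqref{eq:ibst1}–\eqref{eq:ibst3}: the undamped logistic-type ODE $\rd_s v = -v(v+1)$ keeps $v$ inside $(-1,1)$, and more quantitatively drives $v$ toward the stable equilibrium $v=0$ from the region $v>0$ and away from $v=-1$. With the $O(Ae^{-\mu_0 s})$ forcing, a standard barrier/Gr\"onwall argument upgrades the bootstrap bounds $1+2y_0$ and $1-\tfrac14 y_0^{2k}$ to $1+y_0$ and $1-\tfrac12 y_0^{2k}$ on $\{|y|\ge y_0\}$, provided $\sgm_0$ is chosen large (depending on $A, y_0$) so that $\int_{\sgm_0}^\infty CAe^{-\mu_0 s}\,ds$ is much smaller than $y_0^{2k}$, and provided the initial data~\eqref{eq:idu}–\eqref{eq:idw4} verifies the sharper bounds at $s=\sgm_0$ — which follows from~\eqref{eq:U'-ini} (giving $|U'(\sgm_0,y)| \le C(1+|y|)^{-2k/(2k+1)}$ away from the origin, hence $\le 1 - \tfrac34 y_0^{2k}$ for $|y|\ge y_0$ once $y_0$ is small) and the normalization $\rd_y U(\sgm_0,0) = -1$. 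Item (1), repulsivity of the flow for $|y|>y_0$, is then a direct consequence: on that region $|(1+e^s\tau_s)U'| < 1 - \tfrac12 y_0^{2k} + O(Ae^{-\gmm s}) < 1$, so $\rd_y\bigl(by + (1+e^s\tau_s)U\bigr) = b + (1+e^s\tau_s)U' > 0$ (using $b>1$), whence $\bigl(by + (1+e^s\tau_s)U\bigr)$ has the sign of $y$ on $|y|>y_0$ — I would also check that it does not vanish by integrating out from $\pm y_0$ using $U(s,\pm y_0)$ small.

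For item (3): restrict to $|y|\ge 4$. Here I want $-U'(s,y)\le 5/6$ with a constant independent of $A$. The characteristics starting at $|Y(\sgm_0)|\ge 4$ stay in $\{|y|\ge 4\}$ for later $s$ by the repulsivity from item (1) (since $y_0 < 4$), and along such a characteristic the ODE for $v = \td U'$ is $\rd_s v = -v(v+1) + O(Ae^{-\mu_0 s})$. The set $\{v \le 5/6\}$ is forward-invariant for the unperturbed flow with definite inward speed at the boundary ($-\tfrac56\cdot\tfrac{11}{6}<0$), so once $\sgm_0$ is large the perturbation cannot push $v$ above, say, $5/6$, as long as the datum satisfies $v(\sgm_0)\le 3/4$ there, which again follows from~\eqref{eq:U'-ini}. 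For characteristics that enter $\{|y|\ge 4\}$ at some later time $s_\ast$ by crossing $|y|=4$ from inside — but by item (1) the flow at $|y|=4$ points outward, so trajectories can only \emph{leave} $\{|y|\ge 4\}$ going inward cannot happen either; the region $\{|y|\ge 4\}$ is forward-invariant and I only need to control data on it. Hence \eqref{eq:unifdissip}.

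For item (4): combine the two regimes. In the near region I use the crude bound $|U'|\le 1 + 2y_0 \le 2$ from~\eqref{eq:bst1}, which together with $U(\sgm_0,0)=0$, $U(s,0)=0$ and the Taylor behavior already gives $|U'|\lesssim 1$ for $|y|\lesssim 1$, trivially $\le C\max\{(1+|y|)^{-r}, Ae^{-rs}\}$ for that range if we include the constant. For $|y|\ge 4$ (or $\ge$ some fixed $R$), I run the Lagrangian ODE backward/forward: by repulsivity, the characteristic through $(s,y)$ with $|y|$ large came from $|Y(\sgm_0)|$ which is roughly $|y|e^{-b(s-\sgm_0)}$ (since $\dot Y \approx bY$ there, the error terms being lower order), so either $|Y(\sgm_0)|$ is still large — in which case the datum bound $|U'(\sgm_0, Y(\sgm_0))|\le C(1+|Y(\sgm_0)|)^{-2k/(2k+1)}\sim C|y|^{-2k/(2k+1)}e^{2k(s-\sgm_0)/(2k+1)\cdot b}$... hmm, this backward bound degrades. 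The cleaner route is forward: the logistic ODE with positive linear damping constant near the stable branch contracts $|v|$ like $e^{-s}$ plus it inherits the $O(Ae^{-\mu_0 s})$ forcing, and separately the flow coordinate satisfies $|Y(s)| \gtrsim |Y(\sgm_0)|e^{cs}$; converting time decay into spatial decay along the flow yields $|U'(s,y)| \le C\max\{|y|^{-r}, Ae^{-rs}\}$ with $r = \min\{\tfrac{2k}{2k+1}, \mu_0, \ldots\}/b$ or similar — I would fix the precise small $r>0$ at the end. The two-region bound then patches into~\eqref{eq:decayup}.

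The main obstacle I anticipate is item (4): honestly tracking how the $L^\infty$ time decay of $\td U'$ (coming from the attracting logistic dynamics and the $e^{-\mu_0 s}$ forcing) transfers to spatial decay in $|y|$ along characteristics whose spatial position grows like $e^{bs}$, while keeping the rate $r$ and all constants independent of $A, y_0, \gmm$ as required. The subtlety is that the forcing term $e^{-\mu s}\calH(U')$, although small, is only bounded by $Ae^{-\mu_0 s}$ and does \emph{not} obviously decay in $|y|$, so one cannot hope to beat the $Ae^{-rs}$ term; the statement's $\max$ with $Ae^{-rs}$ is exactly what makes this tractable, and the proof must be organized so that the "polynomial in $|y|$" alternative is used only where $|y|$ is not too large compared to $e^{s}$, i.e.\ inside the support of the relevant cutoffs. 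I would handle this by splitting $\{|y|\ge R\}$ into $\{R\le|y|\le e^{\eps' s}\}$ and $\{|y|\ge e^{\eps' s}\}$ for a suitable small $\eps'$, using the spatial-decay route in the former and the time-decay route in the latter.
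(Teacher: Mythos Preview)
Your plan for items (1), (2), and (4) is broadly in line with the paper's approach, but there is a genuine gap in item (3), and since item (4) in the paper uses item (3) as input, the gap propagates.

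The issue is your treatment of characteristics that enter $\{|y|\ge 4\}$ from inside. You write that the region is forward-invariant and that you ``only need to control data on it,'' referring to the initial-time bound $v(\sgm_0)\le 3/4$ from \eqref{eq:U'-ini}. But forward invariance only says that characteristics do not \emph{leave} $\{|y|\ge 4\}$; it does not prevent characteristics starting in $\{y_0<|y|<4\}$ at $s=\sgm_0$ (or at $|y|=y_0$ at later times) from flowing \emph{into} $\{|y|\ge 4\}$. On such a characteristic, $U'$ may start arbitrarily close to $-1$ (the bootstrap \eqref{eq:bst3} only gives $U'\ge -1+\tfrac14 y_0^{2k}$), and the unperturbed logistic ODE $\rd_s v=-v(v+1)$ repels $v$ from $-1$ only very slowly when $v$ is near $-1$. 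So you must show that by the time such a characteristic reaches $|y|=4$, $U'$ has already risen above (say) $-4/5$. This is precisely the content of the paper's proof of item (3): one shows (i) a characteristic starting at $y_0\le|\tilde y|\le 1$ remains in $|y|\le 4$ for a time of order $-2k\log|\tilde y|$, and (ii) integrating the explicit logistic ODE (with initial value $\ge -1+\tfrac12\tilde y^{2k}$) over this time interval forces $U'>-4/5$ before the characteristic exits. The matching of the exponent $2k$ in the initial displacement $\tilde y^{2k}$ with the time scale $-2k\log|\tilde y|$ is what makes this work, and it is not captured by a barrier argument at $v=5/6$ alone.

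A secondary remark: in your barrier computation you check the boundary $v=5/6$, but the relevant threshold for \eqref{eq:unifdissip} is the \emph{lower} barrier $v=-5/6$; the inward speed there is $-(-\tfrac56)(\tfrac16)=\tfrac{5}{36}>0$, which is what one needs. This is minor compared to the gap above. Your item (4) sketch is on the right track (exponential contraction of $(\td U')^2$ along characteristics, converted to spatial decay via $|Y(s)|\lesssim e^{2b(s-\tilde s)}|\tilde y|$), but note that the damping coefficient in $\tfrac12\rd_s(\td U')^2+(1+\td U')(\td U')^2=E^{(1)}\td U'$ is $1+\td U'$, which is only bounded below by $1/6$ \emph{after} you have established item (3); without it you cannot rule out $\td U'$ close to $-1$ and the argument stalls.
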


\begin{proof}[Proof of Lemma~\ref{lem:U1}]
We prove each item in order.

\smallskip
\noindent {\it Proof of~1.} We need to show that there exists a choice of $y_0$ and $\sgm_{0}$ (depending on $A$, see Remark~\ref{rem:deps}) such that the following holds in the interval $s \in [\sgm_{0}, \sgm_{1}]$:
$$
U(s,y) > - by \qquad \text{ if } y > y_0, \qquad U(s,y) < - \frac{b}{1+e^s \tau_s}y \qquad \text{ if } y < -y_0,
$$
Let us focus on the first claim, the second being analogous. By the fundamental theorem of calculus and the bootstrap assumptions, we have that
$$
U(s,y) - U(s,0) = \int_0^y U'(s, \underline y) d \underline y \geq - y_0(1+2y_0) - (y- y_0) \left( 1-\frac{y^{2k}_0}{4} \right).
$$
The claim follows by choosing $y_0$ appropriately, and then by choosing $\sgm_{0}$ appropriately to control the factor containing the modulation parameter $\tau$ (recalling that the bootstrap assumptions~\eqref{eq:bootstrapmod} hold true).

\smallskip
\noindent {\it Proof of~2.} We now show that we can restrict to $y_0$ small and $\sgm_{0} \gg1$ (depending on $A$, see Remark~\ref{rem:deps}) such that the following inequalities hold true:
\begin{align}
&\Big| U(s, \pm y_0) {\pm} y_0 \mp \frac{1}{2k+1} y_0^{2k+1} \Big| \leq \frac{y_0^{2k+1}}{4{(2k+1)}} \qquad \text{for all } s \in [\sgm_{0}, \sgm_{1}] \label{eq:idU1}\\
&\Big| U({\sgm_{0}},  y) {+} y {-} \frac{1}{2k+1} y^{2k+1} \Big| \leq \frac{y^{2k+1}}{4{(2k+1)}} \qquad \text{for all } y \in (-1/4,- y_0) \cup (y_0, 1/4),\label{eq:idU2}\\
&\Big| U'(s, \pm y_0) {+} 1 {-} y_0^{2k} \Big| \leq \frac{y_0^{2k}}{4} \qquad \text{for all } s \in [\sgm_{0}, \sgm_{1}] , \label{eq:id1}\\
&\Big| U'({\sgm_{0}}, y) {+} 1{-} y^{2k} \Big| \leq \frac{y^{2k}}{4} \qquad \text{for all } y \in (-1/4, y_0) \cup (y_0, 1/4),\label{eq:id2}\\
&{U'(\sgm_{0}, y) \geq -1 + \frac{y_0^{2k}}{2} \qquad \text{for all } |y| \geq y_0}.\label{eq:id3}
\end{align}
Note that the above inequalities imply (on initial data and at $|y| = y+0$) bounds which are strict improvements of  \eqref{eq:ibst1} and \eqref{eq:ibst3}. For instance, from \eqref{eq:id2} it follows that $0 \geq U'(\sgm_0, y) \geq -1 + \frac{3}{4} y^{2k} \geq -1 + \frac{3}{4} y_0^{2k}$, for all $y \in (-1/4,- y_0) \cup (y_0, 1/4)$, and similarly for the bounds~\eqref{eq:id1}, as well as~\eqref{eq:id3}.

Bounds \eqref{eq:idU2}, \eqref{eq:id2} and~\eqref{eq:id3} follow easily from our choice of initial data at $\sgm_{0}$ and the expression for the profile (with the choice~\eqref{eq:tchoice} concerning the $(2k+1)$\textsuperscript{st} derivative at $y = 0$), upon choosing $y_0$ to be small and consequently $\sgm_0$ to be large. The proofs of~\eqref{eq:idU1} and ~\eqref{eq:id1} are similar, so we will just focus on showing~\eqref{eq:id1}. Using Taylor expansion with integral remainder and Sobolev embedding, we have
\begin{equation}\label{eq:expansiony0}
\Big| U'(s, y_0) +1 - y^{2k}_0 - \sum_{j=1}^{2k+1} W^{(j)}(s, 0) \frac{y_0^{j-1}}{(j-1)!} \Big| \leq C e^{\frac A 2} y_0^{2k+1}
\end{equation}
Note that the two principal terms on the LHS of the previous estimate arise from our choice of the profile in display~\eqref{eq:tchoice}. We then notice that the coefficients $ W^{(j)}(s, 0)$, $j = 0, \ldots, 2k$ decay by \eqref{eq:bootstrapmod} (some of them are identically zero by~\eqref{eq:orth-cond}), and $|W^{(2k+1)}(s,0)| \leq 4$ by the bootstrap assumptions~\eqref{eq:bootstraplow2}. We then first choose $y_0$ to be small, and then choose $\sgm_{0}$ accordingly to be large to conclude that~\eqref{eq:id1} holds true (see Remark~\ref{rem:deps}).

Recall the equation for $U'$ from \eqref{eq:commutu-1}; we arrange the equation as follows:
\begin{equation} \label{eq:uprime}
\begin{aligned}
&\p_s U' +  U' {+} (U')^2 + (b  y +(1+{e^s\tau_s}) U)\p_y U' \\
&= \left( e^{bs} \xi_{s} - (1+e^{s} \tau_{s}) e^{(b-1)s} \kpp\right) U''
- { e^{s} \tau_{s} (U')^{2}}  \\
&\peq
+(1+e^{s} \tau_{s}) \left( e^{-\mu s} \calH (U') + {e^{-s}}\rd_{y} \calL(U+e^{(b-1)s} \kpp) \right) =: E^{(1)}.
\end{aligned}
\end{equation}
By \eqref{eq:bst1}--\eqref{eq:bst3} for $U'$, \eqref{eq:bst2} and \eqref{eq:U'-L2} for $U''$, \eqref{eq:bootstrapmod} for the modulation terms and Lemma~\ref{lem:forcing} for the forcing terms, we have
\begin{equation} \label{eq:E1bound}
	\nrm{E^{(1)}}_{L^{\infty}} \leq C A e^{-\gmm s},
\end{equation}
where we take $A$ sufficiently large compared to $\kpp_{0}$ if necessary.

We will first focus on showing~\eqref{eq:ibst3}. We now define Lagrangian coordinates for the flow of equation~\eqref{eq:uprime}. The flow can either start from a point on the half line $s = \sgm_{0}, y \geq y_0$ or from a point on the half line $s \geq \sgm_{0}, y= y_0$. Distinguishing between these two cases, we consider Lagrangian maps $X_1(s, \tilde y)$, $X_2(s, \tilde s)$ which are defined by solving the following initial value problems ($\tilde s$ and $\tilde y$ are the Lagrangian parameters):
\begin{align}
&\p_s X_1(s,\tilde y) = b X_1(s,\tilde y) {+}{(1+e^s\tau_s) }U(s, X_1(s,\tilde y)) , \qquad X_1(\sgm_{0},\tilde y) = \tilde y, \qquad \text{for all } \tilde y \geq y_0, \label{eq:lag1}\\
&\p_s X_2(s,\tilde s) = b X_2(s,\tilde s) {+}{(1+e^s\tau_s)}U(s, X_2(s,\tilde s)) , \qquad X_2(\tilde s , {\tilde s}) = y_0, \qquad \text{for all } \tilde s \geq \sgm_{0}. \label{eq:lag2}
\end{align}

We now rewrite equation~\eqref{eq:uprime} in Lagrangian coordinates, to obtain, letting $\tilde U'$ be the composition $\tilde U' (s, \tilde y) = U(s, X_1(s, \tilde y))$: 
\begin{equation}\label{eq:uprimew}
\p_s \tilde U' +  \tilde U' + (\tilde U')^2 =  {E^{(1)}(s, X_1(s, \tilde y)).}
\end{equation}

Let us for a moment assume that there exists $\check s$ such that $ \tilde U'(\check s, \tilde y) \geq -\frac{1}{2}$. Integrating \eqref{eq:uprimew}, and using the bound~\eqref{eq:E1bound}, it is then easy to show that, upon choosing $\sigma_0$ to be large (depending only on $y_0$ and $A$), $ |\tilde U'( s, \tilde y)| \leq \frac{3}{4}$ for all $s \geq \check s$. This implies~\eqref{eq:ibst3} on the half line $\set{y > 0}$ in this case.

Hence it suffices to show~\eqref{eq:ibst3} under the additional assumption that  $\tilde U'(\check s, \tilde y) \leq -\frac{1}{2}$. By the repulsivity of the flow, we always have that $|X_1(s, \tilde y)| \geq |y_0|$ for $s \geq \sgm_0$. Hence, for $s \geq \sgm_0$, we can apply the bootstrap assumption~\eqref{eq:bst3} (i.e., $\tilde U' \geq -1 + \frac{y_0^{2k}}{4}$), which yields the following bound:
\begin{equation}\label{eq:lagU1}
-  \tilde U' - (\tilde U')^2 \geq \left( 1 - \frac{y_0^{2k}}{4}\right)\frac{y_0^{2k}}{4}\geq \frac{y_0^{2k}}{8},
\end{equation}
by virtue of the fact that $y_0$ is chosen to be small. We now choose $\sigma_0$ to be sufficiently large for the following inequality to be valid\footnote{Note that this choice can be made independently of $\sgm_1$.}, in view of~\eqref{eq:E1bound}:
\begin{equation}\label{eq:lagU2}
\int_{\sgm_0}^{\sgm_{1}} |E^{(1)}(s, X_1(s, \tilde y))| \ud \, s \leq \frac{y_0^{2k}}{8}.
\end{equation}

We then integrate~\eqref{eq:uprimew}, taking into account the bounds~\eqref{eq:lagU1} and~\eqref{eq:lagU2}, as well as the bound~\eqref{eq:E1bound}. We deduce:
\begin{equation}\label{eq:linftyuprime1}
\begin{aligned}
 |U'(s,X_1(s, \tilde y))| \leq 1 -\frac 12 y_0^{2k}, \qquad \text{for } \sgm_{0} \leq s \leq \sgm_{1}, \ |\tilde y|\geq y_0.
\end{aligned}
\end{equation}
Essentially repeating the same argument for the $X_2$ trajectories, we have
\begin{equation}\label{eq:linftyuprime2}
\begin{aligned}
 |U'(s,X_2(s, \tilde s))| \leq 1 -\frac 12 y_0^{2k}, \qquad \text{for } \sgm_{0} \leq \tilde s \leq \sgm_{1}, \  \tilde s \leq s \leq \sgm_{1}.
\end{aligned}
\end{equation}
Combining the bounds~\eqref{eq:linftyuprime1} and~\eqref{eq:linftyuprime2} yields~\eqref{eq:ibst3} on the half line $\set{y > 0}$. Arguing in the same manner on $\set{y < 0}$, we obtain \eqref{eq:ibst3}.

Finally, \eqref{eq:ibst1} follows from~\eqref{eq:ibst3} and Taylor expansion about $y = 0$, following a similar reasoning as in inequality~\eqref{eq:expansiony0}.

\smallskip
\noindent {\it Proof of~3.} To prove \eqref{eq:unifdissip}, we first show a quantitative lower bound on the time the Lagrangian trajectories $X_{1}$, $X_{2}$ stay in $y \in [-4, 4]$. Recall the definition of Lagrangian coordinates $X_1$ and $X_2$ in~\eqref{eq:lag1} and~\eqref{eq:lag2}. Define now $\tilde U_i$, for $i \in \{1,2\}$ as follows: 
$$
\tilde U_1 (s, \tilde y) = U(s, X_1(s, \tilde y)), \qquad \tilde U_2 (s, \tilde s) = U(s, X_2(s, \tilde s)).
$$
We then have the following coupled system for $(\tilde U_i, X_i)$:
\begin{equation}
\begin{aligned}
&\p_s \tilde U_i = (b-1)\tilde U_i  + {E^{(0)}(s, X_i)},\\
&\p_s X_i = b X_i {+} {(1+e^s\tau_s)}\tilde U_i.
\end{aligned}
\end{equation}
Here, we recall that
\begin{equation}
\begin{aligned}
&E^{(0)} = \left(- e^{s} \tau_{s} U + e^{bs} \xi_{s} - (1+e^{s} \tau_{s}) e^{(b-1)s} \kpp\right) U' -  e^{(b-1)s} \kpp_{s} \\
&\peq
+(1+e^{s} \tau_{s}) \left( e^{-\mu s} \calH (U) + {e^{-s}} \calL(U+e^{(b-1)s} \kpp) \right)
\end{aligned}
\end{equation}
We let $A_i = X_i + \tilde U_i$, which diagonalizes the system up to a perturbative term on the RHS:
\begin{equation}
\begin{aligned}
\p_s \tilde U_i - (b-1)\tilde U_i &= {E^{(0)}(s, X_i)},\\
\p_s A_i - b A_i &= e^s \tau_s \tilde U_i +  {(1+e^s\tau_s)} {E^{(0)}(s, X_i)} .
\end{aligned}
\end{equation}
Let us now specify to the case $i = 1$. The RHS of the previous display is perturbative as long as $|X_1(s, \tilde y)| \leq 4$. Indeed, using Lemma~\ref{lem:forcing}, as well as the fact that $|U(s,y)| \leq 2|y|$ for all $y \in [-4,4]$, and the bootstrap assumptions~\eqref{eq:bootstrapmod}, we have
 \begin{align*}
&e^{-\mu s}  \nrm{\calH(U)}_{L^{\infty}(-4, 4)} \leq C A e^{-\mu_{0} s}, \\
&e^{-s} \nrm{\calL(U + e^{(b-1)s}\kpp)}_{L^{\infty}}  \leq C (1+\kpp_{0}) e^{(-2+b)s},\\
&e^s |\tau_s \tilde U_i(s, \tilde y)| \leq 8 e^{-\gamma s}.
\end{align*}
Let us first restrict to the case $y_0 \leq |\tilde y| \leq 1$. We integrate the previous display between $\sgm_{0}$ and $s$, to obtain, using the fact that the RHS is perturbative, 
\begin{equation}\label{eq:UAsmall}
\begin{aligned}
\Big| U(s, X_1(s, \tilde y)) e^{-(b-1)(s-\sgm_{0})} - U(\sgm_{0}, \tilde y) \Big| \leq |U(\sgm_{0}, \tilde y)|,\\
\Big| A_1(s, X_1(s, \tilde y))e^{-b(s-\sgm_{0})} - A_1(\sgm_{0}, \tilde y) \Big| \leq \frac 1 2|A_1(\sgm_{0}, \tilde y)|.
\end{aligned}
\end{equation}
Note that  the above inequalities hold by choosing $\sgm_0$ large as a function of $y_0$. Indeed, due to our choice of initial data for $U$ at $\sgm_{0}$, by possibly choosing $\sgm_0$ large, we have that, for all $\tilde y \in [-1, -y_0] \cup [y_0, 1]$, $|U(\sgm_{0}, \tilde y)| \geq \frac{1}{2} |U(\sgm_{0},  y_0)| \geq \frac{y_0}{4}$. It then suffices to choose $\sgm_{0}$ to be large so that, for all $\tilde y \in [-1, -y_0] \cup [y_0, 1]$,
$$
\int_{\sgm_0}^{\sgm_{1}}e^{-(b-1)(s-\sgm_{0})}  |E^{(0)}(s, X_1( s, \tilde y))| \ud \, s \leq  \frac{y_0}{4}.
$$
This shows that the first inequality in~\eqref{eq:UAsmall} holds, up to choosing $\sgm_0$ based on $y_0$. A similar reasoning holds for the second inequality in~\eqref{eq:UAsmall}.

Display~\eqref{eq:UAsmall} implies, due to our choice of initial data at $\sgm_{0}$,
\begin{equation}\label{eq:growthlag}
\begin{aligned}
| U(s, X_1(s, \tilde y))| \leq 2 |\tilde y| e^{(b-1)(s-\sgm_{0})},\\
| A_1(s, X_1(s, \tilde y))|\leq \frac{3}{2} (|\tilde y|^{2k+1} + {e^s \tau_s}) e^{b(s-\sgm_{0})}.
\end{aligned}
\end{equation}
We then deduce that, for all times $s \in [\sgm_{0}, -2k \log(|\tilde y|)+ \sgm_{0}]$, using the bootstrap assumptions~\eqref{eq:bootstrapmod} to show that the term $e^s \tau_s$ is perturbative,
\begin{equation}\label{eq:traj1}
\begin{aligned}
&| U(s, X_1(s, \tilde y))| \leq 2,\\
&| A_1(s, X_1(s, \tilde y))|\leq 2 - \frac {1}{4} \implies |X_1(s,\tilde y)| \leq 4.
\end{aligned}
\end{equation}
The reasoning for $X_2$ is completely analogous, and we deduce that, for all times $s \in [\tilde s, -2k \log(y_0)+\tilde s]$, 
\begin{equation}\label{eq:traj2}
\begin{aligned}
&| U(s, X_2(s, \tilde s))| \leq 2,\\
& |X_2(s,\tilde s)| \leq 4.
\end{aligned}
\end{equation}
We are now in shape to do an $L^\infty$ estimate for $U'$ in the near region. Let us recall the relevant equation:
\begin{equation}
\p_s U' +  U' +(U')^2 + (b  y +{(1+e^s\tau_s)}U)\p_y U'= {E^{(1)}}.
\end{equation}
In Lagrangian coordinates with respect to $X_1$, the above equation reads, letting $ \tilde U' = U'(s, X_1(s,\sgm_{0}))$:
\begin{equation}\label{eq:uplag}
\p_s \tilde U' +  \tilde U' + (\tilde U')^2 =  {E^{(1)}}(s, X_1(s, \tilde y)).
\end{equation}
Let us now suppose by contradiction that, for all $s \in [\sgm_{0}, \sgm_{0} - 2k \log (|\tilde y|)]$, 
\begin{equation}\label{eq:contrad}
U'(s, X_1(s,\tilde y)) \leq -\frac 45.
\end{equation}
Combined with the bootstrap assumption~\eqref{eq:bst3}, display~\eqref{eq:contrad} yields the bound $|\tilde U' +(\tilde U')^2| \leq \frac{y_0^{2k}}{8}$, which in turn implies, since the RHS of~\eqref{eq:uplag} is perturbative, upon choosing $\sgm_{0}$ to be larger, and recalling the definition of $\mu'$ from~\eqref{eq:E1bound},
\begin{equation}
 \frac{\p_s\tilde U'}{\tilde U' +(\tilde U')^2 } + 1  \leq \frac {{\mu'}}{4} e^{-s\frac{{\mu'}}2}.
\end{equation}
This implies
\begin{equation}
 \p_s \log\Big( \frac{ -\tilde U'}{1 + \tilde U'} \Big)+ 1  \leq   \frac {{\mu'}}{4} e^{-s\frac{{\mu'}}2}.
\end{equation}
By integration, since $\int_{\sgm_{0}}^{\infty} \frac{\mu'}{4} e^{-s \frac{\mu'}{2}} \, \ud s < \log 2$, and denoting $Q =  2 \frac{-\tilde U'(\sgm_{0}, \tilde y)}{1+ \tilde U'(\sgm_{0}, \tilde y)} > 0$, we have 
\begin{equation}
U'(s, X_1(s, \sgm_{0})) > \frac{-Qe^{-(s-\sgm_{0})}}{1+Q e^{-(s-\sgm_{0})}} .
\end{equation}
We now calculate this expression at $s = s_*  = \sgm_{0} - 2k \log(|\tilde y|)$.
We notice that, for $\eps_{0}$ sufficiently small, $\tilde U'(\sgm_{0}, \tilde y) \geq -1$, and $1+\tilde U'(\sgm_{0}, \tilde y) \geq \frac 1 2 \tilde y^{2k}$, hence $ 0\leq Q \leq 4\tilde y^{-2k}$. Since $e^{-(s_*-\sgm_{0})} = \tilde y^{2k}$, it follows that
\begin{equation}
U'(s_*, X_1(s_*, \sgm_{0}))> -\frac{4}{4+1}= - \frac 4 5.
\end{equation}
This contradicts~\eqref{eq:contrad}, and yields, for all $\tilde y$ such that $y_0 \leq |\tilde y| \leq 1$, the existence of $s^{(1)} \in [\sgm_{0},  s_*]$ such that 
\begin{equation}\label{eq:1co}
U'( s^{(1)}, X_1(s^{(1)}, \tilde y)) \geq -\frac 45.
\end{equation}
Moreover, in the case $|\tilde y| \geq 1$, the existence of $s^{(1)}$ in the conditions above follows immediately by our choice of initial data. Indeed, by possibly choosing $\sgm_0$ to be larger and $\eps_{0}$ smaller,
\begin{equation}\label{eq:11co}
U'( \sgm_0, X_1(\sgm_0, \tilde y)) \geq -\frac 45
\end{equation}
for all $\tilde y$ such that $|\tilde y| \geq 1$.

A completely analogous reasoning shows that for all $\tilde s \leq \sgm_{1} + 2k \log(|y_0|)$, there exists $s^{(2)} \in [\tilde s, \tilde s - 2k \log(|y_0|)]$ such that we have
\begin{equation}\label{eq:2co}
U'( s^{(2)}, X_2(s^{(2)}, \tilde s)) \geq- \frac 45.
\end{equation}
We now combine the bounds~\eqref{eq:1co}, \eqref{eq:11co} and~\eqref{eq:2co} with the bounds on the Lagrangian trajectory~\eqref{eq:traj1} and~\eqref{eq:traj2} to obtain the existence of $s^{(1)}, s^{(2)}$ (depending resp. on $\tilde y$ and $y_0$), such that
\begin{equation}
\begin{aligned}
&U'(s^{(1)}, X_1(s^{(1)}, \tilde y)) \geq -\frac 45, \qquad |X_1(s^{(1)}, \tilde y)| \leq 4, \qquad \text{for all } \tilde y: y_0 \leq |\tilde y| \leq 4, \label{eq:4bd}\\
&U'( s^{(2)}, X_2(s^{(2)}, \tilde s)) \geq -\frac 45, \qquad |X_2(s^{(2)}, \tilde s)| \leq 4.
\end{aligned}
\end{equation}

We now repeat the reasoning in part 2.~integrating equation~\eqref{eq:uprimew}, with the difference that the starting time of integration is now $s^{(1)}$ (resp. $s^{(2)}$), and we use the bounds~\eqref{eq:4bd}. Recall that the RHS of~\eqref{eq:uprimew} is perturbative everywhere by~\eqref{eq:E1bound}. Repeating the same argument on $\set{y <  0}$, we deduce \eqref{eq:unifdissip} valid for $|y|\geq 4$, $\sgm_{0} \leq s \leq \sgm_{1}$.

\smallskip
\noindent {\it Proof of~4.} We only consider the case of the half-space $\set{y > 0}$ in detail, as the other case is dealt with similarly. We define Lagrangian trajectories $X_{1}$ and $X_{2}$ as in \eqref{eq:lag1} and \eqref{eq:lag2}, respectively, but now with $y_{0}$ replaced by $y = 4$. By $U(s, 0) = 0$, \eqref{eq:ibst1}, \eqref{eq:ibst3} and \eqref{eq:unifdissip}, we may deduce the simple bound $\abs{U(s, y)} \leq y$ by integration. By \eqref{eq:bootstrapmod} and taking $\sgm_{0}$ sufficiently large, it follows that
\begin{equation} \label{eq:exp0}
	\rd_{s} X_{i} = b X_{i} + (1+e^{s} \tau_{s}) \tilde{U}_{i} \quad \imp \quad \frac{1}{2} \rd_{s} X_{i}^{2} \leq 2 b X_{i}^{2}
	\quad \imp \quad \abs{X_{i}} \leq e^{2b(s - \tilde{s})} \abs{\tilde{y}},
\end{equation}
where $(\tilde{s}, \tilde{y}) = (\sgm_{0}, \tilde{y})$ or $(\tilde{s}, 4)$ when $i = 1$ or $2$, respectively.

Next, we again recall the equation for $U'$ in Lagrangian coordinates, which yields, letting $\tilde U' = U'(s, X_{i}(s, \tilde y))$,
\begin{equation}
\p_s \tilde U' +  \tilde U' + (\tilde U')^2 =  E^{(1)}(s, X_{i}(s, \tilde y)).
\end{equation}
Multiplying by $\tilde U'$ and using~\eqref{eq:unifdissip} plus Cauchy--Schwarz, we have (recall that $0 < \gmm < 1$)
\begin{equation}\label{eq:udecay}
\frac{1}{2} \p_s (\tilde U')^2\leq  - \frac \gmm {10}  (\tilde U')^2 + (E^{(1)}(s, X_{i}(s, \tilde y)))^2.
\end{equation}
Recalling \eqref{eq:E1bound} for $E^{(1)}$, it follows that
\begin{equation*}
	\rd_{s} \left( e^{\frac{\gmm}{10} s} (\tilde{U}')^{2} \right) \leq C A e^{(\frac{\gmm}{10}-\gmm) s}.
\end{equation*}
Integrating this equation, we obtain that, for a constant $C > 0$,
\begin{equation}\label{eq:exp1}
|\tilde U'| \leq C  e^{- \frac{\gmm}{10} (s-\tilde{s})} \abs{U'(\tilde{s}, \tilde{y})} + C A e^{-\frac{\gmm}{10} s}.
\end{equation}
In case $i = 2$, we have $(\tilde{s}, \tilde{y}) = (\tilde{s}, 4)$, and the desired bound \eqref{eq:decayup} with $r = \frac{\gmm}{20 b}$ follows from \eqref{eq:unifdissip}, \eqref{eq:exp0} and \eqref{eq:exp1}. On the other hand, in case $i = 1$, the desired bound with $r = \frac{\gmm}{20 b}$ follows from \eqref{eq:U'-ini}, \eqref{eq:exp0} and \eqref{eq:exp1}, where we simply bound $e^{-\frac{\gmm}{10}(s-\sgm_{0})} \abs{\tilde{y}}^{-\frac{2k}{2k+1}} \leq C \abs{y}^{-r}$ and $e^{-\frac{\gmm}{10}(s-\sgm_{0})} e^{-\sgm_{0}} \eps_{0} \leq C A e^{-r s}$. \qedhere
\end{proof}

\subsection{Estimates on \texorpdfstring{$\p_y^2 U$}{U''}}
As a preparation for closing the bootstrap assumption on $\nrm{\rd_{y}^{2k+3} U}_{L^{2}}$, we first prove a uniform bound for $\nrm{\rd_{y}^{2} U}_{L^{2}}$. The key ingredients are the method of characteristics in the region close to $y = 0$, as well as the a-priori pointwise bounds for $U'$ proved in Lemma~\ref{lem:U1}.

\begin{lemma}\label{lem:uii}
There exist $\eps_{0}$, $A$, $y_0$, $\gmm$, $\sgm_{0}$ such that if the initial data conditions~\eqref{eq:idu}--\eqref{eq:idw4} are satisfied and the bootstrap assumptions \eqref{eq:bst1}--\eqref{eq:bootstraplow2} hold for $s \in [\sgm_{0}, \sgm_1]$, then the following inequality holds true for all $s \in [\sgm_0, \sgm_1]$:
\begin{equation}\label{eq:estuii}
\| \p^{2}_y U(s, \cdot)\|_{L^2} \leq C.
\end{equation}
Here, $C > 0$ is a constant independent of $A$, $y_0$, and $\sgm_{0}$.
\end{lemma}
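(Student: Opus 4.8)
The plan is to prove the uniform $L^{2}$ bound \eqref{eq:estuii} by decomposing $\R$ into a \emph{close} region $\{\abs y \leq \tfrac14\}$, an \emph{intermediate} region $R = \{\tfrac14 \leq \abs y \leq y_{2}\}$ for a large \emph{absolute} constant $y_{2}$, and a \emph{far} region $\{\abs y \geq y_{2}\}$, handling each by a different mechanism and summing. The starting point is the equation for $U^{(2)} = \rd_{y}^{2}U$ from \eqref{eq:commutu} with $j = 2$; since $M^{(2)} = \rd_{y}^{2}(UU') - U U''' - 3 U' U'' = 0$, this reads
\begin{equation*}
\rd_{s}U'' + \bigl((1+e^{s}\tau_{s})U + by - e^{bs}\xi_{s} + (1+e^{s}\tau_{s})e^{(b-1)s}\kpp\bigr)\rd_{y}U'' + c\,U'' = (1+e^{s}\tau_{s})\bigl(e^{-\mu s}\calH(U'') + e^{-s}\rd_{y}^{2}\calL(U+e^{(b-1)s}\kpp)\bigr),
\end{equation*}
with damping coefficient $c = 1 + b + 3(1+e^{s}\tau_{s})U'$; by Lemma~\ref{lem:forcing}, the bootstrap bounds on $U'$ and on $\rd_{y}^{2k+3}U$, and \eqref{eq:U'-L2}, the right-hand side is $\aleq A e^{-\mu s}$ in both $L^{2}$ and $L^{\infty}$. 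Since $U' \geq -1$ (Lemma~\ref{lem:U1}) and $e^{s}\tau_{s}$ is negligible, $c \geq b - 2 = -\tfrac{2k-1}{2k}$ up to a small error, so the linear term is only \emph{mildly anti-damping}.

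For the close region I would use the method of characteristics along the flow $\rd_{s}X = (1+e^{s}\tau_{s})U(s,X) + bX - e^{bs}\xi_{s} + (1+e^{s}\tau_{s})e^{(b-1)s}\kpp$, which by Lemma~\ref{lem:U1} is repulsive, so a characteristic in $\{\abs y \leq \tfrac14\}$ at time $s$ came from a point $\abs{\tilde y} = \delta \leq \tfrac14$ at $s = \sgm_{0}$ and, near $y = 0$ where $U(s,y) \approx -y$, satisfies $X(s) \aeq \delta e^{(b-1)(s-\sgm_{0})}$ and leaves $\{\abs y \leq \tfrac14\}$ after time $\aeq \tfrac{1}{b-1}\log(1/\delta)$. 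Along this flow $\tilde U'' := U''(s,X(s))$ obeys $\rd_{s}\tilde U'' + c\,\tilde U'' = (\text{forcing})$, so the homogeneous growth is at most $e^{(2-b)(s-\sgm_{0})} \leq \delta^{-(2k-1)}$ while the characteristic stays in the region, and this is exactly compensated by $\abs{U''(\sgm_{0},y)} \leq C\abs{y}^{2k-1}$ — because $\rd_{y}^{2}\mathring U(y) = 2k\,y^{2k-1} + O(y^{4k-1})$ by \eqref{eq:tchoice} and $W_{0}$ vanishes to order $2k$ at $0$ by \eqref{eq:idw2}, with $C$ depending only on the profile and $\eps_{0}$. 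The forcing contribution requires more care: integrated against the worst-case factor $e^{(2-b)(s-s')}$ it is a priori only $\aleq A\,\delta^{-(2k-1)}e^{-\mu\sgm_{0}}$, harmless for $\delta$ bounded below by a threshold $\delta_{\ast}$ (which one may take $\to 0$ as $\sgm_{0}\to\infty$), while for $\abs y \leq \delta_{\ast}$ one instead writes $U''(s,y) = W^{(2)}(s,0) + \int_{0}^{y}U'''(s,y')\,\ud y'$, using $\abs{W^{(2)}(s,0)} = \abs{w_{2}} \leq e^{-\gmm s}$ (or $= 0$ when $k = 1$) from \eqref{eq:orth-cond}/\eqref{eq:trapped} and an interpolation bound for $\nrm{U'''}_{L^{\infty}}$ coming from \eqref{eq:bst1} and \eqref{eq:bst2}; restarting the characteristic estimate from the last time $\abs{X} = \delta_{\ast}$ then closes the loop. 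The outcome is $\nrm{U''(s,\cdot)}_{L^{\infty}(\abs y \leq 1/4)} \leq C$ with $C$ independent of $A$, $y_{0}$, $\sgm_{0}$; it is crucial that both $C$ and the radius $\tfrac14$ be bootstrap-independent, since they feed into the next step.

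On the intermediate region $R$ I would run a weighted $L^{2}$ estimate: multiply the $U''$-equation by $e^{-\lmb\abs y}U''$ and integrate, where $\lmb$ is a large constant fixed in terms of the \emph{inner} radius $\tfrac14$ only. The transport term produces, after integration by parts, a positive term $\tfrac{b}{2}\int(\lmb\abs y - 1 + O(1))e^{-\lmb\abs y}(U'')^{2}$ of size $\gtrsim\lmb$ for $\abs y \geq \tfrac14$, which dominates $\int\abs{c}\,e^{-\lmb\abs y}(U'')^{2}$ (here $\abs c$ is bounded); the boundary term at $\abs y = \tfrac14$ is controlled by the close-region $L^{\infty}$ bound, the one at $\abs y = y_{2}$ by the far-region estimate, and the forcing by Lemma~\ref{lem:forcing} and Gr\"onwall (with $\sgm_{0}$ large). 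Since $\lmb$ and $y_{2}$ are absolute, this gives a uniform bound for $\nrm{U''}_{L^{2}(R)}$. On the far region, Lemma~\ref{lem:U1} (in particular \eqref{eq:unifdissip}, \eqref{eq:decayup}) gives $\nrm{U'}_{L^{\infty}(\abs y \geq y_{2})} \leq Cy_{2}^{-r} + CAe^{-r\sgm_{0}} \leq \tfrac{1}{10}$ once $y_{2}$ is a large absolute constant and $\sgm_{0}$ is large depending on $A$, so $c \geq \tfrac12$ there and a plain (unweighted) $L^{2}$ energy estimate closes: the $by\,\rd_{y}U''$ term yields $-\tfrac b2\int(U'')^{2}$ plus a favorable boundary term $-\tfrac{b}{2}y_{2}(U''(\pm y_{2}))^{2} \leq 0$, the $U\,\rd_{y}U''$ term is absorbed by the damping, and the forcing is handled by Lemma~\ref{lem:forcing}. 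Adding the three regional bounds yields \eqref{eq:estuii}. The main obstacle is the close region: making the characteristic estimate yield an $A$-independent constant forces the exact matching of the anti-damping rate $2-b$ with the vanishing order $2k-1$ of the initial datum, together with the careful (partly nonlocal) treatment of $\calH(U'')$ near the origin sketched above; the remaining two regions are comparatively routine weighted/plain $L^{2}$ energy estimates, but depend essentially on the bootstrap-independence of the radii $\tfrac14$ and $y_{2}$ and on the a priori pointwise control of $U'$ from Lemma~\ref{lem:U1}.
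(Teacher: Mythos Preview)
Your three-region decomposition and the mechanisms for each region match the paper's exactly, and your intermediate and far region arguments are essentially correct. The gap is in the close-region ``restarting'' step: your first-order Taylor expansion $U''(s,y) = w_{2} + \int_{0}^{y}U'''$ yields data at $\abs{y} = \delta_{\ast}$ of size $e^{-\gmm s} + C A^{\theta}\delta_{\ast}$, with $\theta > 0$ from the interpolation bound on $\nrm{U'''}_{L^{\infty}}$. Restarting the characteristic from $(s_{\ast}, \pm\delta_{\ast})$ and running forward to $\abs{y} \leq \tfrac14$ incurs the anti-damping growth factor $\sim\delta_{\ast}^{-(2k-1)}$ (the same matching you correctly identify for the initial-data characteristics), and the product contains the term $C A^{\theta}\delta_{\ast}^{2-2k}$. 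For $k \geq 2$ this blows up as $\delta_{\ast} \to 0$, and for $k = 1$ it equals the constant $C A^{\theta}$; either way the resulting bound depends on $A$, which defeats the purpose of the lemma.

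The paper's fix is to Taylor-expand $U''(s,\cdot)$ to order $2k-1$ at $y = 0$, using \emph{all} of the constraints $w_{0} = w_{1} = w_{2k} = 0$, $\abs{w_{j}} \leq e^{-\gmm s}$ for $2 \leq j \leq 2k-1$, $\abs{w_{2k+1}} \leq 1$, together with the profile values $\bar U^{(j)}(0)$. This gives $\abs{U''(s,\pm y_{0}) \mp 2k\,y_{0}^{2k-1}} \leq y_{0}^{2k-1}$ uniformly in $s$; the $A$-dependent Taylor remainder, of order $y_{0}^{2k}\nrm{U^{(2k+2)}}_{L^{\infty}}$, is absorbed by choosing $y_{0}$ small as a function of $A$, which is permitted by the dependency hierarchy in Remark~\ref{rem:deps}. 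The boundary data now has the correct vanishing order $y_{0}^{2k-1}$ with an $A$-independent coefficient, and the characteristic estimate starting from $\abs{y} = y_{0}$ (rather than from a threshold tending to zero) yields $\abs{U''} \leq 2k+2$ on $\{y_{0} \leq \abs{y} \leq \tfrac14\}$. Your argument is repaired by substituting this higher-order expansion for the first-order one; the remainder of your proof then goes through.
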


\begin{proof}[Proof of Lemma~\ref{lem:uii}]

We begin by recalling \eqref{eq:commutu} with $j = 2$ for $U''$, which we rewrite as follows:
\begin{equation}\label{eq:uii}
\begin{aligned}
	&\rd_{s} U^{''} +  (1+b + 3U')U'' + (by+{(1+e^s \tau_s)}U) \rd_{y} U'' \\
	& = -{ 3 e^{s} \tau_{s} U' U''}+ \left(e^{b s} \xi_{s} - (1+e^{s} \tau_{s}) e^{(b-1) s} \kpp \right) U'''\\
	&\peq + (1+e^{s} \tau_{s}) \left( e^{-\mu s} \calH (U'') + {e^{-s}} \rd_{y}^{2} \calL(U + e^{(b-1)s} \kpp) \right)=: E^{(2)}. 
\end{aligned}
\end{equation}
Upon restricting to small $y_0$ and consequently to large $\sgm_0$, the following properties are a consequence of Taylor expansion about $0$, the Taylor coefficients of the profile at $y =0$, and the hypotheses on initial data at $s = \sgm_{0}$ (exactly as in the reasoning around~\eqref{eq:expansiony0}):
\begin{align}
&\left| U''(s,  \pm y_0) \mp 2k y_0^{2k-1} \right|\leq  y_0^{2k-1}  \qquad \text{for all } s \geq \sgm_{0}, \label{eq:iid1}\\
&\left| U''(s,  y) - 2k y^{2k-1} \right|\leq  |y|^{2k-1} \qquad \text{for all } y \in (-1/4, y_0) \cup (y_0, 1/4). \label{eq:iid2}
\end{align}
We remark that, in order to ensure this condition, we first need to first choose $y_0$ to be small, and as a function of that, we need to choose $\sgm_0$ to be large, cf.~Remark~\ref{rem:deps}.

Let us recall the definition of the Lagrangian trajectories $X_1$ and $X_2$ from~\eqref{eq:lag1}--\eqref{eq:lag2}. Let us first focus on $X_1$, and we define $\tilde U'' := U''(s, X_1(s, \tilde y))$. Assume that $\tilde y$ is such that $y_0 \leq \tilde y \leq \frac{1}{4}$, the negative $\tilde y$ case being analogous. We easily have, from~\eqref{eq:uii} (using moreover~\eqref{eq:bst3}) that
\begin{equation}
\rd_{s} \tilde U'' +  (b -2)\tilde U''   \leq E^{(2)}(s, X_1(s, \tilde y)). 
\end{equation}
We now notice, similarly to~\eqref{eq:E1bound}, that
\begin{equation}\label{eq:E2bound}
	\nrm{E^{(2)}(s, y)}_{L^{\infty}} \leq C A e^{-\gmm s}.
\end{equation}
Hence, restricting to $\sgm_0$ possibly larger (and treating the terms on the RHS as perturbative), we have, for $s \geq \sgm_{0}$, upon integration
\begin{equation}\label{eq:uiibound}
0 \leq U''(s, X_1(s, \tilde y)) \leq (2k+2) e^{(2-b)(s-\sgm_{0} )}\tilde y^{2k-1}.
\end{equation}
We notice that we have the following easy consequence of~\eqref{eq:bst3} in the region $y \in (y_0, 2)$, $s \in [\sgm_0, \sgm_{1}]$ (upon choosing $\sgm_0$ large as a function of $A$ and $y_0$):
\begin{equation}
U(s,y) \geq  -(b-1) y.
\end{equation}
We now go back to the definition of the Lagrangian trajectories~\eqref{eq:lag1}, and we immediately deduce that, for $\tilde y \in (y_0, \frac{1}{4})$, as long as $|X_1(s, \tilde y)| \leq 2$,
\begin{equation}
X_1(s, \tilde y) \geq e^{(b-1)(s-\sgm_0)} \tilde y.
\end{equation}
We now have that, for $s$ such that $s - \sgm_{0}= - 2k \log \tilde y$, the following holds:
\begin{equation}\label{eq:laguii}
X_1(s, \tilde y) \geq  1.
\end{equation}
Inequality~\eqref{eq:uiibound}, now gives that, for all $s$ such that $s - \sgm_{0} \leq - 2k \log \tilde y$, and all $\tilde y \in (y_0, \frac {1} {4})$:
\begin{equation}\label{eq:uiifinal1}
|U''(s, X_1(s, \tilde y))| \leq (2k+2) e^{-(2-b)2k \log \tilde y}\tilde y^{2k-1} \leq 2k +2,
\end{equation}
since $(2-b)2k = 2k -1$.

Similarly, turning now our attention to $X_2$, we have, for $s \geq \tilde s$, and $\tilde y$ such that $y_0 \leq \tilde y \leq \frac{1}{4}$:
\begin{equation}\label{eq:uiibound2}
0 \leq U''(s, X_2(s, \tilde s)) \leq (2k+2) e^{(b-2)(s-\tilde s )}y^{2k-1}_0.
\end{equation}
We moreover have that, for $s$ such that $s - \tilde s = - 2k \log \tilde y_0$, the following holds:
\begin{equation}\label{eq:laguii2}
X_2(s, \tilde s) \geq y_0 e^{-(b-1)2k \log y_0} = 1.
\end{equation}
Combined with inequality~\eqref{eq:uiibound2}, this implies that, for all $s$ such that $s - \sgm_{0} \leq - 2k \log y_1$,
\begin{equation}\label{eq:uiifinal2}
|U''(s, X_2(s, \tilde s))| \leq 2k +2.
\end{equation}
Combining previous inequalities \eqref{eq:laguii}, \eqref{eq:uiifinal1},  \eqref{eq:laguii2}, and \eqref{eq:uiifinal2}, we conclude that 
\begin{equation}\label{eq:bounds2interior}
|U''(s,y)| \leq 2k+2
\end{equation} for $|y| \leq \frac 14$, and $s \in [\sgm_0, \sgm_1]$. This concludes the bounds in the ``near'' region.

We now proceed to show an $L^2$ bound in the ``intermediate'' region: $y: \frac 1 4 \leq |y| \leq y_2$, where $y_2$ is chosen depending on $\zeta > 0$ using Lemma~\ref{lem:U1} in a way that, for all $y: |y| \geq y_2$, $s \in [\sgm_0, \sgm_1]$,
\begin{equation}
|U'(s,y)| \leq \zeta.
\end{equation}
We are going to first show a weighted $L^2$ estimate on $U''$, where the weight is exponentially decaying in $y$. Although the estimates for this part are carried out on the whole real line, one should think of them as just useful to the ``intermediate region'' ($|y| \leq y_2$). In the final part of the proof of this lemma, we are going to deal with the ``far'' region using the smallness arising from point 3.~in lemma~\ref{lem:U1}.

We now multiply equation~\eqref{eq:uii} by the weight $e^{- \lambda y}U''$, to obtain a weighted $L^2$ estimate in the region $|y| \geq \frac 14$. We integrate by parts on the set $S = [- 1/4, 1/4]^c$. We have, denoting by $\| f\|_{w} := \|e^{- \frac \lambda 2 y} f(y) \|_{L^2(S)}$,
\begin{equation}
\begin{aligned}
&\frac 12 \p_s \|U''(s,\cdot)\|^2_w + \int_S \big(1+ \frac b 2+ \frac 52 U'- \frac{1}{2} e^s \tau_s U'(s,y) \big) (e^{-\frac \lambda 2 y} U''(s,y))^2 \, \ud y\\
&+ \frac{\lambda}{2} \int_S (by+(1+e^s \tau_s) U(s,y)) (e^{-\frac \lambda 2 y} U''(s,y))^2  \, \ud y\\
& \leq 2e^{- \frac \lambda 4} \Big( \frac b 4 + \frac 14\Big) (2k+2)^2 + \|U''(s,\cdot)\|_w \|E^{(2)}(s,\cdot)\|_w ,
\end{aligned}
\end{equation}
where we used the bootstrap assumptions and the bounds \eqref{eq:bounds2interior} on $U''$ to control the boundary terms. Now, for $y \geq \frac 14$, possibly choosing $\sgm_0$ to be large, $by - U \geq \frac {b-1} 8$, and $|U'(s,y)| \leq 1$. Hence, it suffices to choose $\lambda$ to satisfy $(16)^{-1}\lambda(b-1)-2 \geq 1$ to obtain (after an application of the Cauchy--Schwarz inequality on the RHS, and restricting to $\sgm_0$ large):
\begin{equation}
\frac 12 \p_s \|U''(s,\cdot)\|^2_w + \|U''(s,\cdot)\|^2_w \leq 2(2k+2)^2 + \|E^{(2)}(s,\cdot)\|^2_w,
\end{equation}
Together with the assumptions on initial data, and the bounds~\eqref{eq:E2bound}, this implies  $\|U''(s,\cdot)\|^2_w \leq C_k$ for all $s \geq \sgm_{0}$, where $C_k$ is a constant depending only on $k$. Possibly redefining the constant $C_k$, we also have the unweighted bound
\begin{equation}\label{eq:L2ii}
\|U''(s,\cdot) \|_{L^2([-y_2, y_2])} \leq C_k.
\end{equation}
(recall the definition of $y_2$ from lemma~\ref{lem:U1}).

We finally perform estimates in the ``far away'' region $|y| \geq y_2$. Again we consider equation~\eqref{eq:uii}, we multiply by $U''$ and integrate by parts. We have, adding a multiple of the bound \eqref{eq:L2ii}, and letting $S_2 =\R \setminus [-y_2, y_2]$, possibly redefining the constant $C_k$,
\begin{equation}
\begin{aligned}
&\frac 12 \p_s \|U''(s,\cdot)\|^2_{L^2(\R)} + \int_{S_2} \big(1+ \frac b 2+ \frac 52 U'- \frac{1}{2} e^s \tau_s U'(s,y) \big) ( U''(s,y))^2 \, \ud y\\
& \leq C_k + \|U''(s,\cdot)\|_{L^2(\R)} \|E^{(2)}(s,\cdot)\|_{L^2(\R)} ,
\end{aligned}
\end{equation}
Recall now that, choosing $\zeta$ appropriately, in particular $|U'(s,y)| \leq \frac 14$ for $|y| \geq y_2$, so that  
$$
1+ \frac b 2+ \frac 52 U'- \frac{1}{2} e^s \tau_s U'(s,y) - \frac{1}{4}\geq \frac{1}{4}.
$$
Adding to the above inequality a multiple of $\| U''(s, \cdot)\|_{L^2([-y_2, y_2])}$, we have
\begin{equation}\label{eq:U2almost}
\frac 12 \p_s \|U''(s,\cdot)\|_{L^2(\R)}^2 +\frac{1}{4} \|U''(s,\cdot)\|_{L^2(\R)}^2  \leq  C_k +  4\|E^{(2)}(s,\cdot)\|^2_{L^2(\R)}.
\end{equation}
Finally, by \eqref{eq:bst1}--\eqref{eq:bst3} for $U'$, \eqref{eq:bst2} and \eqref{eq:U'-L2} for $U''$ and $U'''$, \eqref{eq:bootstrapmod} for the modulation terms and Lemma~\ref{lem:forcing} for the forcing terms, we have
\begin{equation} \label{eq:E2bound-L2}
	\nrm{E^{(2)}}_{L^{2}} \leq C A e^{-\gmm s}.
\end{equation}
Combining the above bounds with inequality~\eqref{eq:U2almost}, we obtain (possibly choosing $\sgm_0$ large as a function of $A$) \eqref{eq:estuii} on $\bbR$ as desired. \qedhere

\end{proof}

\subsection{Top order \texorpdfstring{$L^2$}{L2} estimate}
We are now ready to close the bootstrap assumption \eqref{eq:bst2} on the top order $L^{2}$ norm.
\begin{lemma}\label{lem:high}
There exist $\eps_{0}$, $A$, $y_0$, $\gmm$, $\sgm_{0}$ such that if the initial data conditions~\eqref{eq:idu}--\eqref{eq:idw4} are satisfied and the bootstrap assumptions \eqref{eq:bst1}--\eqref{eq:bootstraplow2} hold for $s \in [\sgm_{0}, \sgm_{1}]$, we have
\begin{equation} \label{eq:esthigh}
\|\rd_{y}^{2k+3} U(s, \cdot)\|_{L^2} \leq C,
\end{equation}
where $C > 0$ is a constant independent of $A$, $y_{0}$ and $\sgm_{0}$. Hence,~\eqref{eq:ibst2} holds.
\end{lemma}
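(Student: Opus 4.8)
The plan is to close \eqref{eq:bst2} by running weight‑free $L^{2}$ energy estimates on the equations \eqref{eq:commutu} for $U^{(j)}$ at the two top orders $j=2k+2$ and $j=2k+3$, using that at these orders the linear reaction term is dissipative, and feeding in the intermediate bound $\nrm{U''}_{L^{2}}\le C$ already established in Lemma~\ref{lem:uii}. I would first treat $j=2k+2$. Pairing \eqref{eq:commutu} with $U^{(2k+2)}$ in $L^{2}$ and integrating by parts in the transport term $\bigl((1+e^{s}\tau_{s})U+by-e^{bs}\xi_{s}+(1+e^{s}\tau_{s})e^{(b-1)s}\kpp\bigr)U^{(2k+3)}$ — the part independent of $y$ drops out, and $\rd_{y}$ of the rest is $(1+e^{s}\tau_{s})U'+b$ — the coefficient multiplying $(U^{(2k+2)})^{2}$ becomes
\begin{equation*}
1+(2k+1)b-\tfrac{b}{2}+\bigl(2k+\tfrac{5}{2}\bigr)(1+e^{s}\tau_{s})U'.
\end{equation*}
With $b=\frac{2k+1}{2k}$, the value of this expression at $U'=-1$, $\tau_{s}=0$ is $-1+\frac{2(2k+2)-3}{4k}=\frac{1}{4k}>0$; since $\nrm{U'}_{L^{\infty}}\le 1+2y_{0}$ by \eqref{eq:bst1} and $\abs{e^{s}\tau_{s}}\le Ae^{-\gmm s}$ by \eqref{eq:bootstrapmod}, the coefficient is bounded below by a positive constant $c_{k}$ (one may take $c_{k}=\frac{1}{8k}$) once $y_{0}$ is sufficiently small and $\sgm_{0}$ sufficiently large in terms of $k$ and $A$, consistently with Remark~\ref{rem:deps}. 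This is the familiar mechanism by which $2k+2$ derivatives render the linear part dissipative, and it is precisely here that the value $b=\frac{2k+1}{2k}$ enters. Boundary terms at $\pm\infty$ are dropped; this is legitimate for the a priori estimate once one works with solutions decaying sufficiently fast at spatial infinity (obtained by approximation if needed).

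It then remains to control the remaining terms on the right-hand side of the energy identity. For the dispersive/dissipative forcing $(1+e^{s}\tau_{s})e^{-\mu s}\calH(U^{(2k+2)})$ I would not use a crude bound (Lemma~\ref{lem:forcing} only yields $\nrm{\calH(U^{(j)})}_{L^{2}}\aleq A$, which would reintroduce the bootstrap constant), but rather the \emph{sign} of $\langle\calH(V),V\rangle$ for real $V$: the part of the symbol of $\calH$ coming from $\Gmm$ is purely imaginary and odd, hence contributes $0$, while the part coming from $\Ups$ is real and $\le 0$ because $\Ups\ge 0$ and $P_{>0}\ge 0$; since also $1+e^{s}\tau_{s}>0$, this whole term has a favorable sign and can be discarded. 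The term $(1+e^{s}\tau_{s})e^{-s}\rd_{y}^{2k+2}\calL(U+e^{(b-1)s}\kpp)$ is handled by Cauchy--Schwarz using \eqref{eq:forcing-L-j-L2}, which supplies an exponentially decaying factor $(1+\kpp_{0})e^{-(2+(2k+\frac12)b)s}$. Finally, writing the nonlinear commutator as $M^{(2k+2)}=\sum_{i=2}^{2k+1}\binom{2k+2}{i}U^{(i)}U^{(2k+3-i)}$ — where, crucially, by construction every factor carries between $2$ and $2k+1$ derivatives — I would put $L^{\infty}$ on the lower-order factor and $L^{2}$ on the other, and interpolate by Gagliardo--Nirenberg between $\nrm{U''}_{L^{2}}\le C$ and $X:=\nrm{U^{(2k+2)}}_{L^{2}}$ (together with $\nrm{U^{(i)}}_{L^{\infty}}\aleq\nrm{U^{(i)}}_{L^{2}}^{1/2}\nrm{U^{(i+1)}}_{L^{2}}^{1/2}$), obtaining $\nrm{M^{(2k+2)}}_{L^{2}}\aleq_{k}1+X^{1-\frac{1}{4k}}$.

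Collecting these, the energy identity becomes $\tfrac12\tfrac{\ud}{\ud s}X^{2}+c_{k}X^{2}\aleq_{k}X^{2-\frac{1}{4k}}+(1+\kpp_{0})^{2}e^{-cs}$, and Young's inequality absorbs $X^{2-\frac{1}{4k}}$ into $\frac{c_{k}}{2}X^{2}$. Together with the bound $\nrm{\rd_{y}^{2k+2}U(\sgm_{0},\cdot)}_{L^{2}}\le C$ — dominated by $\nrm{\rd_{y}^{2k+2}\br{U}(\sgm_{0},\cdot)}_{L^{2}}$, which is uniformly bounded thanks to the profile asymptotics \eqref{eq:chi-exp-infty} and \eqref{eq:profdef} and Lemma~\ref{lem:chi}, the contributions of the polynomial correction and of $W_{0}$ being $O(e^{-c\sgm_{0}})$ by \eqref{eq:idw4} and \eqref{eq:idw3} — Gr\"onwall gives $\nrm{U^{(2k+2)}(s,\cdot)}_{L^{2}}\le C$ with $C$ independent of $A$, $y_{0}$, $\sgm_{0}$. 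One then repeats the argument at order $j=2k+3$: the reaction coefficient is now bounded below by $-1+\frac{2(2k+3)-3}{4k}=\frac{3}{4k}$ (minus small errors), the $\calH$-term is again discarded by sign, the $\calL$-term is again exponentially small by \eqref{eq:forcing-L-j-L2}, and $M^{(2k+3)}=\sum_{i=2}^{2k+2}\binom{2k+3}{i}U^{(i)}U^{(2k+4-i)}$ now involves only factors of order $\le 2k+2$, all bounded in $L^{2}$ — hence in $L^{\infty}$ up to order $2k+1$ — by the bound just proved together with Lemma~\ref{lem:uii}; thus $\nrm{M^{(2k+3)}}_{L^{2}}\le C$ and the energy inequality closes linearly, giving \eqref{eq:esthigh}. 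Since $A$ will eventually be chosen larger than this absolute $C$, \eqref{eq:ibst2} follows.

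The main obstacle is making every constant genuinely independent of the bootstrap parameter $A$. Three points demand care: (i) verifying the strict positivity of the reaction coefficient, which rests on the exact value $b=\frac{2k+1}{2k}$ and on the smallness $\nrm{U'}_{L^{\infty}}\le 1+2y_{0}$; (ii) exploiting the precise structure of $\calH$ in the $L^{2}$ pairing (antisymmetric dispersive part, favorably signed dissipative part) rather than an estimate carrying a factor $A$; and (iii) arranging the interpolation so that $M^{(2k+2)}$, which involves only derivatives of order $\le 2k+1$, closes against the $A$-free bound on $\nrm{U''}_{L^{2}}$ with a power of $\nrm{U^{(2k+2)}}_{L^{2}}$ strictly below $2$, so that Young's inequality applies. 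Everything else is routine.
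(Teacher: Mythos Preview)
Your proposal is correct and follows essentially the same strategy as the paper: pair \eqref{eq:commutu} with $U^{(j)}$, use the sign of the $\calH$-pairing (antisymmetric $\Gmm$-part, nonpositive $\Ups$-part), observe that at $j\ge 2k+2$ the reaction coefficient is strictly positive, and close via Gagliardo--Nirenberg interpolation against the $A$-free bound $\nrm{U''}_{L^{2}}\le C$ from Lemma~\ref{lem:uii}. The only structural difference is that the paper works directly at $j=2k+3$ in one step (interpolating $M^{(2k+3)}$ between $\nrm{U''}_{L^{2}}$ and $\nrm{U^{(2k+3)}}_{L^{2}}$, yielding $\int|U^{(a)}U^{(b)}U^{(2k+3)}|\aleq_{k}\nrm{U^{(2k+3)}}_{L^{2}}^{2-\frac{1}{4k+2}}$), whereas you first establish the $j=2k+2$ bound and then use it to control $M^{(2k+3)}$ outright; your route is the one sketched in the paper's strategy section, while the actual proof is slightly more economical.
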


\begin{proof}[Proof of Lemma~\ref{lem:high}]
Let $j = 2k+3$ and consider equation~\eqref{eq:commutu}. We multiply this equation by $\p_y^{(j)}U$ and integrate by parts. We obtain, using the fact that the each of the operators $e^{-s} \Gmm(e^{bs} D_{y}) e^{bs} \rd_{y}$ and $e^{-s} \Ups(e^{bs} D_{y})$ on the RHS (cf.~\eqref{eq:H-def}--\eqref{eq:L-def}) either is anti-symmetric or has a good sign,
\begin{align}
&\frac 12 \p_s \|U^{(j)}\|_{L^2(\R)}^2 +\int_{\R}\left( (1 -b) + jb+(j+1)(1+e^{s} \tau_{s})U' \right) (U^{(j)})^2  \,  \ud y \nonumber\\
& -\frac 12\int_{\R} (b  + (1+e^s \tau_s)U')( U^{(j)})^2 \, \ud y \label{eq:highenergy}\\
& \leq (1+e^{s} \tau_{s}) \|M^{(j)}\|_{L^2(\R)} \|U^{(j)}\|_{L^2(\R)}.\nonumber 
\end{align}
This implies
\begin{align}
&\frac 12 \p_s \|U^{(j)}\|_{L^2(\R)}^2 +\int_{\R}\left( \left(1 - \frac{3}{2} b\right) + jb+\left(j+\frac 1 2\right)(1+e^{s} \tau_{s})U' \right) (U^{(j)})^2 \, \ud y\\
& \leq \|M^{(j)}\|_{L^2(\R)} \|U^{(j)}\|_{L^2(\R)}.
\end{align}
Note the following inequalities, valid for $b > 2$, $a \geq 2$:
\begin{align}
&\|U^{(b)}\|_{L^2(\R)} \lesssim \|U''\|_{L^2(\R)}^{1-\theta_1} \|U^{(j)}\|_{L^2(\R)}^{\theta_1},\\
&\|U^{(a)}\|_{L^\infty} \lesssim \|U''\|_{L^2(\R)}^{1-\theta_2} \|U^{(j)}\|_{L^2(\R)}^{\theta_2},
\end{align}
where $\theta_1 = (b-2)/(j-2)$ and $\theta_2 = (a-3/2)/(j-2)$.

The general form of a term appearing in $M^{(j)}$ is $U^{(a)} U^{(b)}$, with $a,b \geq 1$ and $a + b = j+1$. We estimate, when $a \leq b$,
\begin{equation}
\int_{\R} |U^{(a)} U^{(b)} U^{(j)}| dy \leq C_j \|U''\|^{1-\theta_1-\theta_2}_{L^2(\R)}  \|U^{(j)}\|_{L^2(\R)}^{1+ \theta_1 + \theta_2} \leq C_j \|U^{(j)}\|_{L^2(\R)}^{2- \frac 1{2(j-2)}},
\end{equation}
where $C_j$ is a constant which depends only on $j$ (here, we used the bound $\|U ''\|_{L^2(\R)} \leq C$ from Lemma~\ref{lem:uii}). Combining the previous inequalities, and substituting $2k+3$ for $j$,
\begin{equation*}
\begin{aligned}
&\frac 12 \p_s \|U^{(2k+3)}\|_{L^2(\R)}^2 + \int_{\R}(j+\frac 1 2)e^{s} \tau_{s} U' (U^{(2k+3)})^2 \, \ud y\\
&+\left(\frac{b-1} 2 - C_k \|U^{(2k+3)}\|_{L^2(\R)}^{ -\frac 1{4k+2}} \right) \|U^{(2k+3)}\|_{L^2(\R)}^2 \leq 0.
\end{aligned}
\end{equation*}
Here, $C_k$ is a constant that depends only on $k$. It then follows, using the bootstrap assumptions~\eqref{eq:bootstrapmod}, as well as choosing $\sgm_0$ sufficiently large, that
\begin{equation}\label{eq:highint}
\frac 12 \p_s \|U^{(2k+3)}\|_{L^2(\R)}^2 +\left(\frac{b-1} 4 - C_k \|U^{(2k+3)}\|_{L^2(\R)}^{ -\frac 1{4k+2}} \right) \|U^{(2k+3)}\|_{L^2(\R)}^2 \leq 0.
\end{equation}
From this inequality and the assumptions on initial data it follows that, for all $s \geq \sgm_{0}$, the following bound is propagated:
\begin{equation} \label{eq:esthigh-pre}
\|U^{(2k+3)}\|_{L^2(\R)} \leq 2 \Big(\frac{4C_k}{b-1} \Big)^{4k}.
\end{equation}
For $A$ large, this proves the improved bound \eqref{eq:ibst2}.
\end{proof}

\subsection{Weighted \texorpdfstring{$L^{2}$}{L2} estimates on \texorpdfstring{$U$}{U}}
Finally, we improve the bootstrap assumptions \eqref{eq:bst-w-1} and \eqref{eq:bst-w-high} concerning weighted $L^{2}$ estimates on $\rd_{y} U$ and $\rd_{y}^{2k+3} U$, respectively.
\begin{lemma}\label{lem:weighted2}
There exist $\eps_{0}$, $A$, $y_0$, $\gmm$, $\sgm_{0}$ such that if the initial data conditions~\eqref{eq:idu}--\eqref{eq:idw4} are satisfied and the bootstrap assumptions \eqref{eq:bst1}--\eqref{eq:bootstraplow2} hold for $s \in [\sgm_{0}, \sgm_{1}]$, the improved inequalities~\eqref{eq:ibst-w-1} and~\eqref{eq:ibst-w-high} hold true. 
\end{lemma}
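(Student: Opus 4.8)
The plan is to run a weighted $L^2$ energy estimate directly on the equations \eqref{eq:commutu-1} and \eqref{eq:commutu}, using weights that are dyadically localized in $y$ and whose growth in $s$ is tuned to the Lagrangian flow $y\sim e^{bs}$, so that both the semi-norm pieces in $\dot{\calH}^{n}_{<e^{bs}}$ are controlled simultaneously. I would first dispose of the low-order bound \eqref{eq:ibst-w-1}. Writing $\nrm{U}_{\dot{\calH}^{1}_{<L}}=\nrm{U'}_{\dot{\calH}^{0,\nu}_{<L}}$ with $\nu=\tfrac12-\tfrac{1}{2k+1}$ and $L=e^{bs}$, for each dyadic scale $2^{j}<e^{bs}$ I would multiply \eqref{eq:uprime} by $\varpi_j^2 U'$ with $\varpi_j$ supported in $\abs{y}\sim 2^{j}$ and of size $2^{\nu j}$, integrate in $y$, and integrate by parts in the transport term $(by+(1+e^s\tau_s)U)\rd_y U'$. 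The crucial structural facts are: the linear coefficient $U'+(U')^2$ contributes a term $\ge -(U')^2\ge -\tfrac14$ on $\abs{y}\ge y_2$ (cf.\ Lemma~\ref{lem:U1}, part~3, giving $-U'\le 5/6$, hence $1+U'+\ldots$ has a favorable sign in the relevant range after combining with the transport commutator), and the transport integration by parts produces $\tfrac12\rd_y\big(by+(1+e^s\tau_s)U\big)\varpi_j^2 = \tfrac12(b+(1+e^s\tau_s)U')\varpi_j^2$ plus $\varpi_j\varpi_j' (by+U)$; since on $\abs{y}\sim 2^{j}$ with $2^j$ large we have $\varpi_j'/\varpi_j\sim \nu/y$ and $by+U\sim by$ (by the pointwise control $\abs{U}\le\abs{y}$ from Lemma~\ref{lem:U1}), this gives a coefficient $\sim b(\tfrac12+\nu)>0$ by the requirement $\nu<\tfrac12$, which dominates the small bad terms. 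The forcing terms $e^{-\mu s}\rd_y\calH(U)$ and $e^{-s}\rd_y\calL(\ldots)$ are handled by Lemma~\ref{lem:mult-L2Linfty} and Lemma~\ref{lem:UL2}; the modulation-driven terms are $O(Ae^{-\gmm s})$ by \eqref{eq:bootstrapmod}. Summing the dyadic estimates (which are essentially decoupled because the transport flow is almost diagonal and the $\calH$-kernel is nearly scale-local by Lemma~\ref{lem:mult-ker}) and comparing the result at $s=\sgm_0$ with \eqref{eq:U-w-ini} gives \eqref{eq:ibst-w-1} after taking $\sgm_0$ large and $A$ large.

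For the top-order bound \eqref{eq:ibst-w-high}, the same scheme is applied to \eqref{eq:commutu} with $j=2k+3$: write $\nrm{U}_{\dot{\calH}^{2k+3}_{<L}}=\nrm{U^{(2k+2)}}_{\dot{\calH}^{1,\nu}_{<L}}$ (actually one works with $U^{(j)}=\rd_y^{2k+3}U$ directly and the weight $\varpi_j$ of size $2^{(\nu+1)j}$ in the notation of Lemma~\ref{lem:mult-w}), multiply by $\varpi^2 U^{(j)}$ and integrate. At this derivative count the linear coefficient is $1+(j-1)b+(j+1)(1+e^s\tau_s)U'\ge 1+(j-1)b-2(j+1)$, which is large and positive because of the factor $(j-1)b=(2k+2)\cdot\frac{2k+1}{2k}$ — this is the familiar "high derivatives are dissipative everywhere" gain already exploited in Lemma~\ref{lem:high}, and it now provides coercivity uniformly in $y$, not just for $\abs{y}$ large. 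The transport term again contributes a favorable $b(\tfrac12+\nu)$ after integration by parts. The genuinely new difficulty is the nonlocal forcing $e^{-\mu s}\varpi\,\calH(U^{(j)})=e^{-\mu s}\varpi\,\calH\rd_y^{2k+2}U'$: the weight $\varpi$ does not commute with $\calH$, and naively moving it through would cost a derivative. This is exactly what Lemma~\ref{lem:mult-w} is designed to handle — I would split $\varpi\calH\rd_y^{2k+2}U' = [\varpi,\calH]\rd_y^{2k+2}U' + \calH(\varpi\rd_y^{2k+2}U')$ (or rather directly estimate $\nrm{\varpi\calH\rd_y^{\ell}V}_{L^2}$ via \eqref{eq:calH-w} with $V=U'$, $\ell=2k+2$, $\nu=\tfrac12-\tfrac{1}{2k+1}$), and the commutator estimate \eqref{eq:calH-comm-w} ensures the right-hand side involves only $\ell=2k+2$ weighted derivatives of $U'$, i.e.\ $\nrm{U}_{\dot{\calH}^{2k+3}_{<e^{bs}}}$ itself, so no derivative is lost; the prefactor $e^{-\mu s}2^{-\max\{\alp,\bt,0\}j_0}\le e^{-\mu s}$ (or, after summing over the relevant $j_0\lesssim e^{bs}$, something like $e^{-\mu_0 s}$) is summable and absorbable. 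The term $M^{(j)}$ of quadratic type is controlled by interpolation exactly as in Lemma~\ref{lem:high}, using $\nrm{U''}_{L^2}\le C$ (Lemma~\ref{lem:uii}) and $\nrm{U^{(2k+3)}}_{L^2}\le C$ (Lemma~\ref{lem:high}), together with the weighted bound being bootstrapped; one checks that each product $U^{(a)}U^{(b)}$ with $a+b=j+1$ can be split so that the weight lands on the higher-order factor and the lower-order factor is bounded in $L^\infty$ by Lemma~\ref{lem:btstrap-Linfty}, yielding a contribution $\lesssim (\text{small})\,\nrm{U}^2_{\dot{\calH}^{2k+3}_{<e^{bs}}}$ plus absorbable lower-order terms.

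After assembling the differential inequality of the schematic form $\tfrac12\rd_s\Phi^2 + c\,\Phi^2 \le C_k + (\text{absorbable}) + C A e^{-\gmm s}$ with $\Phi=\nrm{U}_{\dot{\calH}^{2k+3}_{<e^{bs}}}$ and $c>0$, I would integrate from $\sgm_0$, use the initial bound \eqref{eq:U-w-ini} (which gives $\Phi(\sgm_0)\le C$ independent of $A$), and conclude $\Phi(s)\le C$ for all $s\in[\sgm_0,\sgm_1]$ with $C$ independent of $A$; choosing $A$ large then yields the strict improvement \eqref{eq:ibst-w-high}. A subtle point deserving care throughout is that the semi-norm has a moving cutoff $L=e^{bs}$: the "tapered" outer piece $L^{n-\frac{1}{2k+1}-\frac12}\nrm{\rd_y^n U}_{L^2(\abs{y}>L/2)}$ has an $s$-dependent normalization, so when differentiating $\Phi^2$ in $s$ one picks up an extra term from $\rd_s L$ and from the fact that the region of integration moves; I expect this bookkeeping — reconciling the dyadic-in-$y$ sum with the $s$-derivative of the outermost scale, and verifying that the flow indeed carries the boundary $\abs{y}=L/2$ outward at rate $\le b$ so the moving-domain term has the right sign — to be the main technical obstacle, on par with the nonlocal commutator. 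The resolution is that, by Lemma~\ref{lem:U1} part~1 and the bound $\abs{U}\le\abs{y}$, the Lagrangian velocity $by+(1+e^s\tau_s)U$ at $\abs{y}\sim e^{bs}$ is $(b+O(1/\!\log))y$, matching the weight's growth rate, so the moving-frame correction is either favorably signed or lower order, exactly as anticipated in the discussion preceding the statement.
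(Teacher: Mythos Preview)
Your overall architecture (localized weighted $L^2$ estimates, use of Lemma~\ref{lem:mult-w} for the nonlocal term, Gagliardo--Nirenberg for $M^{(j)}$) matches the paper's, but the core mechanism you invoke is wrong, and the proof as sketched would not close.

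The central gap is the claimed coercivity. You assert that the transport integration by parts combined with the linear term yields a net coefficient ``$\sim b(\tfrac12+\nu)>0$'' (for $U'$) or ``large and positive'' (for $U^{(2k+3)}$). Neither holds. With a weight behaving like $\abs{y}^{\nu}$ on $\abs{y}\sim 2^j$, the coefficient multiplying $(\varpi U')^2$ in the energy identity is
\[
q_1 - \tfrac{1}{2}\rd_y v - \tfrac{\varpi'}{\varpi}\,v
\;\approx\; 1 - \tfrac{b}{2} - \nu b + O(U', U/y),
\]
and with $\nu = \tfrac12 - \tfrac{1}{2k+1}$, $b=\tfrac{2k+1}{2k}$ one computes $1 - \tfrac{b}{2} - \nu b = 0$ \emph{exactly}. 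This is forced by the scale invariance of $\dot{\calH}^1_{<\infty}$ and is not an accident. The same cancellation occurs at top order: the extra $(2k+2)b$ in $q_{2k+3}$ is exactly absorbed by the additional $2k+2$ in the weight exponent. So there is no differential inequality of the form $\tfrac12\rd_s\Phi^2 + c\,\Phi^2 \le C + \ldots$ with $c>0$; the leading balance is marginal.

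What the paper does instead is to use \emph{transported} weights: $\varpi_j(s,y) = e^{(1-\frac{b}{2})(s-\sgm_0)}\varpi_{j,0}(ye^{-b(s-\sgm_0)})$, chosen so that the ``ideal'' operator $\rd_s + by\rd_y + \tfrac{b}{2} - q$ annihilates $\varpi_j$ exactly. The energy identity then reads
\[
\tfrac12\!\int\!\varpi_j^2 (U')^2\Big|_{\sgm_0}^{s} = \int_{\sgm_0}^{s}\!\!\int\!\big(\breve{\calT}_1(\eta_{\ge i_0}\varpi_j)\,U' + \eta_{\ge i_0}\varpi_j\,\calT_1 U'\big)\eta_{\ge i_0}\varpi_j U'\,\ud y\,\ud s',
\]
and the point is that $\breve{\calT}_1\varpi_j$ consists \emph{only} of the deviation terms $(v-by)\rd_y\varpi_j$ and $\tfrac12(1+e^s\tau_s)U'\varpi_j$, which are small by the decay bound \eqref{eq:decayup} from Lemma~\ref{lem:U1}(4) and hence integrable on $[\sgm_0,\infty)$. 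This is a propagation argument from initial data, not a dissipation argument; no coercivity is needed or available. Your paragraph about the moving cutoff $L=e^{bs}$ is groping toward this---the transported weight is precisely what makes that bookkeeping automatic---but you never arrive at it.

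A second, related gap concerns the top-order nonlocal term. You propose to estimate $\varpi\,\calH\rd_y^{2k+2}U'$ directly via \eqref{eq:calH-w}, but that bound requires $\nrm{U'}_{\dot{\calH}^{2k+3,\nu}_{<L}}$ on the right, which is one derivative above what you control. The paper instead writes $\int \varpi\calH(\calU)\,\varpi\calU = \int \calH(\varpi\calU)\,\varpi\calU + \int [\varpi,\calH]\calU\cdot\varpi\calU$: the first integral has a \emph{sign} (since $\calH$ encodes the dispersive/dissipative structure of $\Gmm\rd_y + \Ups$) and is dropped, while only the commutator is estimated via \eqref{eq:calH-comm-w}, which costs merely $\nrm{U'}_{\dot{\calH}^{2k+2,\nu}_{<L}} = \nrm{U}_{\dot{\calH}^{2k+3}_{<L}}$. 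Without invoking that sign, your scheme loses a derivative at top order.
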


\begin{proof}[Proof of Lemma~\ref{lem:weighted2}]
We begin the proof with a basic, abstract computation. Consider a first-order operator of the form
\begin{equation*}
	\calT = \rd_{s} + v \rd_{y} + q.
\end{equation*}
We decompose $\calT$ into its anti-symmetric and symmetric parts, i.e., $\calT = \calT^{a} + \calT^{s}$, where
\begin{equation*}
	\calT^{a} = \tfrac{1}{2} (\calT - \calT^{\dagger}) = \rd_{s} + v \rd_{y} + \tfrac{1}{2} (\rd_{y} v), \qquad 
	\calT^{s} = \tfrac{1}{2} (\calT + \calT^{\dagger}) = q - \tfrac{1}{2} (\rd_{y} v).
\end{equation*}
Let $\varpi^{2} = \varpi^{2}(s, y)$ be a nonnegative weight. If we multiply $\calT V$ by $\varpi^{2} V$ and integrate over $[s_{1}, s_{2}] \times \bbR$, we have
\begin{equation} \label{eq:w-energy}
\begin{aligned}
	&\int_{s_{1}}^{s_{2}} \int (\calT V) (\varpi^{2} V) \, \ud y \ud s \\
	&= \frac{1}{2} \left. \int \varpi^{2} V^{2} \, \ud y \right|_{s=s_{1}}^{s_{2}} + \frac{1}{2} \int_{s_{1}}^{s_{2}} \int (\varpi^{2} \calT V) V  \, \ud y \ud s
	+ \frac{1}{2} \int_{s_{1}}^{s_{2}} \int V \calT^{\dagger} (\varpi^{2} V)  \, \ud y \ud s \\
	&= \frac{1}{2} \left. \int \varpi^{2} V^{2} \, \ud y \right|_{s=s_{1}}^{s_{2}} + \frac{1}{2} \int_{s_{1}}^{s_{2}} \int ([\varpi^{2}, \calT^{a}] V) V  \, \ud y \ud s
	+ \frac{1}{2} \int_{s_{1}}^{s_{2}} \int (\varpi^{2} \calT^{s} + \calT^{s} \varpi^{2}) V V  \, \ud y \ud s \\
	&= \frac{1}{2} \left. \int \varpi^{2} V^{2} \, \ud y \right|_{s=s_{1}}^{s_{2}} 
	+ \int_{s_{1}}^{s_{2}} \int  \left( - \rd_{s} \varpi - v \rd_{y} \varpi  -\frac{1}{2} (\rd_{y} v) \varpi + q \varpi \right) \varpi  V^{2}  \, \ud y \ud s.
\end{aligned}
\end{equation}
Motivated by the last expression, we introduce the operator
\begin{equation*}
	\breve{\calT} := \rd_{s} \varpi + v \rd_{y} \varpi  + \frac{1}{2} (\rd_{y} v) \varpi - q \varpi.
\end{equation*}
As in the proof of Lemma~\ref{lem:mult-w}, we introduce a nonnegative smooth partition of unity $\set{\eta_{j}}_{j \in \bbZ}$ on $\bbR$ subordinate to the open cover $\set{A_{j} = \set{y \in \bbR : 2^{j-3} < \abs{y} < 2^{j+2}}}_{j \in \bbZ}$, and also the shorthand $\eta_{\geq j} = \sum_{j' \geq j} \eta_{j'}$. 

\smallskip
\noindent {\it Step~1.} Our goal is to prove \eqref{eq:ibst-w-1} concerning $U'$. In view of Lemma~\ref{lem:U1}, it suffices to bound the expression 
\begin{equation} \label{eq:U'-w-outside}
	\sup_{j \in \bbZ, \, 2^{j} < e^{bs}}\left( \int_{2^{j-1} < \abs{y} < 2^{j}}  \eta_{\geq i_{0}}^{2} \big(  \abs{y}^{\frac{1}{b}-\frac{1}{2}} U'(s, y) \big)^{2}\, \ud y \right)^{\frac{1}{2}}
	+ e^{(1-\frac{1}{2} b ) s} \left( \int_{\abs{y} > \frac{e^{bs}}{2}} \eta_{\geq i_{0}}^{2} U'(s, y)^{2} \, \ud y \right)^{\frac{1}{2}},
\end{equation}
where the cutoff parameter $i_{0} > 0$ is to be determined below. As a preparation for the proof, we introduce 
\begin{equation} \label{eq:T1}
	\calT_{1} = \rd_{s} + v \rd_{y} + q_{1},
\end{equation}
where
\begin{equation} \label{eq:T1-coeff}
	 v = \left( b y + (1 + e^{s} \tau_{s}) U - e^{b s} \xi_{s} + (1+e^{s} \tau_{s}) e^{(b-1) s} \kpp \right), \qquad
	q_{1} = \left(1 + (1 + e^{s} \tau_{s}) U' \right).
\end{equation}
Recall, from \eqref{eq:commutu-1}, that $U'$ obeys
\begin{equation}\label{eq:u1recall}
\begin{aligned}
	\calT_{1} U' & = (1+e^{s} \tau_{s}) \left( e^{-\mu s} \calH (U') + e^{-s} \rd_{y} \calL(U + e^{(b-1)s} \kpp) \right).
\end{aligned}
\end{equation}
Let $j_{1} := \lfloor b \sgm_{0} \log 2  \rfloor$. For each integer $j \leq j_{1}$, we introduce the weight
\begin{equation} \label{eq:w-1}
	\varpi_{j}(s, y) = e^{(1 - \frac{1}{2} b) (s - \sgm_{0})} \varpi_{j, 0}(y e^{-b(s-\sgm_{0})}), \qquad
	\varpi_{j, 0} = \begin{cases}
	2^{(\frac{1}{b} - \frac{1}{2}) j} \eta_{j}  & \hbox{ for } j < j_{1}, \\
	2^{(\frac{1}{b} - \frac{1}{2}) j_{1}} \eta_{\geq j_{1}} & \hbox{ for } j = j_{1}.
	\end{cases}
\end{equation}
Observe that each $\varpi_{j}$ solves the equation
\begin{equation*}
	\left( \rd_{s} + by \rd_{y} + \frac{1}{2} b - 1 \right) \varpi_{j} = 0,
\end{equation*}
where, as we will see, the LHS is a good approximation of $\breve{\calT}_{1} \varpi_{j}$. 

We are now ready to begin the proof of \eqref{eq:ibst-w-1} in earnest. For each $j \leq j_{1}$, we apply \eqref{eq:w-energy} with $\varpi = \eta_{\geq i_{0}} \varpi_{j}$, which leads to
\begin{align*}
	\frac{1}{2} \int \eta_{\geq i_{0}}^{2} \varpi_{j}^{2} U'(s, y)^{2} \, \ud y
	= \frac{1}{2} \int \eta_{\geq i_{0}}^{2} \varpi_{j}^{2} U'(\sgm_{0}, y)^{2} \, \ud y + \int_{\sgm_{0}}^{s} \int \left(\breve{\calT}_{1} (\eta_{\geq i_{0}} \varpi_{j}) U' + \eta_{\geq i_{0}} \varpi_{j} \calT_{1} U' \right) \eta_{\geq i_{0}} \varpi_{j} U' \ud y \ud s.
\end{align*}
Thanks to \eqref{eq:w-1}, if we take the supremum in $j \leq j_{1}$, then the contribution of the LHS is equivalent to \eqref{eq:U'-w-outside}, whereas that of the first term on the RHS is bounded by a constant $C$ as a consequence of \eqref{eq:U-w-ini}. It is then straightforward to derive the estimate
\begin{equation*}
	\sup_{s \in [\sgm_{0}, \sgm_{1}]} \sup_{j \leq j_{0}} \left( \int \eta_{\geq i_{0}}^{2} \varpi_{j}^{2} U'(s, y)^{2} \, \ud y \right)^{\frac{1}{2}}
	\leq C + C \sup_{j \leq j_{0}} \int_{\sgm_{0}}^{\sgm_{1}} \left( \nrm{\eta_{\geq i_{0}} \varpi_{j} \calT_{1} U'}_{L^{2}} + \nrm{\breve{\calT}_{1} (\eta_{\geq i_{0}} \varpi_{j}) U'}_{L^{2}}\right) \, \ud s.
\end{equation*}
We claim that, for some $c > 0$ and $\sgm_{0}$ sufficiently large depending on $A$,
\begin{equation*}
	\sup_{j \leq j_{0}} \int_{\sgm_{0}}^{\sgm_{1}} \left( \nrm{\eta_{\geq i_{0}} \varpi_{j} \calT_{1} U'}_{L^{2}} + \nrm{\breve{\calT}_{1} (\eta_{\geq i_{0}} \varpi_{j}) U'}_{L^{2}}\right) \, \ud s
	\aleq 2^{\frac{3}{2} i_{0}} + 2^{-c i_{0}} A + e^{-c \sgm_{0}} A.
\end{equation*}
From this claim, \eqref{eq:ibst-w-1} would follow by taking $i_{0}$, $A$, and $\sgm_{0}$ large enough (in this order).

To bound the contribution of $\eta_{\geq i_{0}} \varpi_{j} \calT_{1} U'$, we use \eqref{eq:u1recall}. Using \eqref{eq:bootstrapmod} for $e^{s} \tau_{s}$, \eqref{eq:calH-w} in Lemma~\ref{lem:mult-w} for $e^{-\mu s} \calH(U')$ (with $\varpi = \eta_{\geq i_{0}} \varpi_{j}$, $\ell = 0$ and $\nu = \frac{1}{2} - \frac{1}{b}$) with \eqref{eq:bst-w-1}--\eqref{eq:bst-w-high}, and \eqref{eq:forcing-L-j-L2} for $e^{-s} \rd_{y} \calL(U + e^{(b-1) s} \kpp)$, we have
\begin{align*}
	\nrm{\eta_{\geq i_{0}} \varpi_{j} \calT_{1} U'}_{L^{2}}
	&\aleq (1 + e^{-\gmm s} A) \left( e^{-\mu s} A + 2^{(\frac{1}{b} - \frac{1}{2}) j} e^{(1-\frac{1}{2} b)(s - \sgm_{0})} (1+\kpp_{0}) e^{-(2-\frac{1}{2} b) s} \right) \\
	&\aleq  (1 + e^{-\gmm s} A) \left( e^{-\mu s} A + (1+\kpp_{0}) e^{-s} \right),
\end{align*}
where on the last line, we used the fact that $2^{j} \aleq 2^{j_{1}} \aeq e^{b \sgm_{0}}$. The integral of the last expression on $[\sgm_{0}, \sgm_{1}]$ is clearly acceptable.

Next, we bound the contribution of $\breve{\calT}_{1} (\eta_{\geq i_{0}} \varpi_{j}) U'$. We compute
\begin{equation*}
	\breve{\calT}_{1} (\eta_{\geq i_{0}} \varpi_{j}) = 
	v \eta_{\geq i_{0}}' \varpi_{j} + \eta_{\geq i_{0}} (v - by) \varpi_{j}' - \frac{1}{2} \eta_{\geq i_{0}} (1+e^{s} \tau_{s}) U' \varpi_{j}.
\end{equation*}
To proceed, note that, by the transport property of $\varpi_{j}$, the $s$-support of $\nrm{\eta_{\geq i_{0}}' \varpi_{j}}_{L^{\infty}}$ is at most of length $O(1)$ (independent of $i_{0}$ and $j$). Moreover, by \eqref{eq:decayup} and $U(s, 0) = 0$,
\begin{equation} \label{eq:decayup-int}
	\abs{U'(s, y)} + \abs{y}^{-1} \abs{U(s, y)} \aleq \max\set { (1+\abs{y})^{-r}, A e^{-r s} }.
\end{equation}
Then, using also \eqref{eq:bootstrapmod},
\begin{equation*}
	\abs{v - by} = \abs{(1 + e^{s} \tau_{s}) U - e^{b s} \xi_{s} + (1+e^{s} \tau_{s}) e^{(b-1) s} \kpp}
	\aleq \abs{y} \max\set { (1+\abs{y})^{-r}, A e^{-r s} } + e^{-\gmm s} A.
\end{equation*}
Finally, observe that $\eta_{\geq i_{0}} \varpi_{j}$ and $\eta_{\geq i_{0}} \abs{y} \varpi_{j}'$ are supported in $\set{ \abs{y} \ageq 2^{j} e^{b(s-\sgm_{0})}} \cap \set{\abs{y} \ageq 2^{i_{0}}}$ and are bounded by $C 2^{(\frac{1}{b} - \frac{1}{2}) j} e^{(1 - \frac{1}{2} b)(s-\sgm_{0})} \mathbf{1}_{> 2^{i_{0}-5}}(2^{j} e^{b(s-\sgm_{0})})$, where $\mathbf{1}_{> 2^{i_{0}-5}}$ is the characteristic function of $\set{\abs{y} > 2^{i_{0} - 5}}$. Putting all these together, we may arrive at 
\begin{align*}
\int_{\sgm_{0}}^{\sgm_{1}} \nrm{\breve{\calT}_{1} (\eta_{\geq i_{0}} \varpi_{j}) U'}_{L^{2}} \, \ud s
&\aleq 2^{(\frac{1}{b}+\frac{1}{2}) i_{0}} \nrm{U'}_{L^{2}(\supp \eta_{\geq i_{0}}')} \\
&\pleq + \int_{\sgm_{0}}^{\sgm_{1}} \left(2^{-r j} e^{-rb(s-\sgm_{0})} \mathbf{1}_{> 2^{i_{0}-5}}(2^{j} e^{b(s-\sgm_{0})}) + A e^{-r s} \right)\nrm{U}_{\dot{\calH}_{<e^{bs}}^{1}} \, \ud \sgm \\
&\aleq 2^{(\frac{1}{b}+1) i_{0}} + \left(2^{-r i_{0}} + A e^{-r \sgm_{0}} \right) A,
\end{align*}
where we used \eqref{eq:ibst1}, \eqref{eq:ibst3} and \eqref{eq:bst-w-1} on the last line. This proves the desired claim.

\smallskip
\noindent {\it Step~2.} Now we turn to the proof of \eqref{eq:ibst-w-high} concerning $U^{(2k+3)}$. To simplify the notation, let us write $\calU = U^{(2k+3)}$. In view of Lemma~\ref{lem:high}, it suffices to bound the expression 
\begin{equation} \label{eq:U-high-w-outside}
	\sup_{j \in \bbZ, \, 2^{j} < e^{bs}} \left( \int_{2^{j-1} < \abs{y} < 2^{j}} \eta_{\geq i_{0}}^{2} \big( \abs{y}^{\frac{1}{b} + 2k+\frac{3}{2}}  \calU(s, y)\big)^{2} \, \ud y \right)^{\frac{1}{2}}
	+ e^{(1+(2k+\frac{3}{2}) b) s} \left( \int_{\abs{y} > \frac{e^{bs}}{2}} \eta_{\geq i_{0}}^{2} \calU(s, y)^{2} \, \ud y \right)^{\frac{1}{2}},
\end{equation}
where the cutoff parameter $i_{0} > 0$ is to be determined below. Like in Step~1, we introduce
\begin{equation} \label{eq:T-high}
	\calT_{2k+3} = \rd_{s} + v \rd_{y} + q_{2k+3},
\end{equation}
where $v$ is as before and
\begin{equation} \label{eq:T-high-coeff}
	q_{2k+3} = \left(1 + (2k+2) b + (2k+4) (1 + e^{s} \tau_{s}) U' \right).
\end{equation}
Recall, from \eqref{eq:commutu}, that $\calU$ obeys
\begin{equation}\label{eq:u-high-recall}
\begin{aligned}
	\calT_{2k+3} \calU & = - (1+e^{s} \tau_{s}) M^{(2k+3)} + (1+e^{s} \tau_{s}) \left( e^{-\mu s} \calH (\calU) + e^{-s} \rd_{y}^{2k+3} \calL(U + e^{(b-1)s} \kpp) \right).
\end{aligned}
\end{equation}
Let $j_{1} := \lfloor b \sgm_{0} \log 2  \rfloor$ and for each integer $j \leq j_{1}$, we introduce the weight
\begin{equation}
	\varpi_{j}(s, y) = e^{(1 + (2k+\frac{3}{2}) b) (s - \sgm_{0})} \varpi_{j, 0}(y e^{-b(s-\sgm_{0})}), \qquad
	\varpi_{j, 0} = \begin{cases}
	2^{(\frac{1}{b} + 2k+\frac{3}{2}) j} \eta_{j}  & \hbox{ for } j < j_{1}, \\
	2^{(\frac{1}{b} + 2k+\frac{3}{2}) j_{1}} \eta_{\geq j_{1}} & \hbox{ for } j = j_{1}.
	\end{cases}
\end{equation}

For each $j \leq j_{1}$, we apply \eqref{eq:w-energy} with $\varpi = \eta_{\geq i_{0}} \varpi_{j}$, which leads to
\begin{align*}
	\frac{1}{2} \int \eta_{\geq i_{0}}^{2} \varpi_{j}^{2} \calU(s, y)^{2} \, \ud y
	= \frac{1}{2} \int \eta_{\geq i_{0}}^{2} \varpi_{j}^{2} \calU(\sgm_{0}, y)^{2} \, \ud y + \int_{\sgm_{0}}^{s} \int \left(\breve{\calT}_{2k+3} (\eta_{\geq i_{0}} \varpi_{j}) U' + \eta_{\geq i_{0}} \varpi_{j} \calT_{2k+3} \calU \right) \eta_{\geq i_{0}} \varpi_{j} \calU \ud y \ud s.
\end{align*}
To avoid the derivative loss, we further write out the contribution of $\eta_{\geq i_{0}} \varpi_{j} \calT_{2k+3} \calU$ as follows:
\begin{align*}
	\int \eta_{\geq i_{0}} \varpi_{j} \calT_{2k+3} \calU \eta_{\geq i_{0}} \varpi_{j} \calU \, \ud y
	&= \int (1+e^{s} \tau_{s}) e^{-\mu s} \eta_{\geq i_{0}} \varpi_{j} \calH(\calU) \eta_{\geq i_{0}} \varpi_{j} \calU \, \ud y \\
	&\peq + \int (1+e^{s} \tau_{s}) e^{-s} \eta_{\geq i_{0}} \varpi_{j} \rd_{y}^{2k+3} \calL(U + e^{(b-1) s} \kpp) \eta_{\geq i_{0}} \varpi_{j} \calU\, \ud y \\
	&\peq - \int (1+e^{s} \tau_{s}) \eta_{\geq i_{0}} \varpi_{j} M^{(2k+3)} \eta_{\geq i_{0}} \varpi_{j} \calU \, \ud y \\
	&= (1+e^{s} \tau_{s}) e^{-\mu s} \int \calH (\eta_{\geq i_{0}} \varpi_{j} \calU) \eta_{\geq i_{0}} \varpi_{j} \calU \, \ud y \\
	&\peq + \int (1+e^{s} \tau_{s}) e^{-\mu s} [\eta_{\geq i_{0}} \varpi_{j}, \calH] \calU \eta_{\geq i_{0}} \varpi_{j} \calU \, \ud y \\
	&\peq + \int (1+e^{s} \tau_{s}) e^{-s} \eta_{\geq i_{0}} \varpi_{j} \rd_{y}^{2k+3} \calL(U + e^{(b-1) s} \kpp) \eta_{\geq i_{0}} \varpi_{j} \calU\, \ud y \\
	&\peq - \int (1+e^{s} \tau_{s}) \eta_{\geq i_{0}} \varpi_{j} M^{(2k+3)} \eta_{\geq i_{0}} \varpi_{j} \calU \, \ud y.
\end{align*}
Observe that the first term on the far RHS is nonnegative, by the dispersive/dissipative property of $\Gmm$/$\Ups$, respectively. Returning to the weighted energy identity, taking the supremum in $j \leq j_{1}$ and in $s \in [\sgm_{0}, \sgm_{1}]$, then proceeding as before, we arrive at
\begin{equation} \label{eq:U-high-w-en}
	\sup_{s \in [\sgm_{0}, \sgm_{1}]} \sup_{j \leq j_{0}} \left( \int \eta_{\geq i_{0}}^{2} \varpi_{j}^{2} \calU(s, y)^{2} \, \ud y \right)^{\frac{1}{2}}
	\leq C + C \sup_{j \leq j_{0}} \int_{\sgm_{0}}^{\sgm_{1}} \left( \mathrm{I} + \mathrm{II} + \mathrm{III} + \mathrm{IV} \right) \, \ud s,
\end{equation}
where 
\begin{align*}
	\mathrm{I} &= (1+e^{s} \tau_{s}) e^{-\mu s} \nrm{[\eta_{\geq i_{0}} \varpi_{j}, \calH] \calU}_{L^{2}}, \\
	\mathrm{II} &=  (1+e^{s} \tau_{s}) e^{-s}  \nrm{\eta_{\geq i_{0}} \varpi_{j} \rd_{y}^{2k+3} \calL(U + e^{(b-1) s} \kpp)}_{L^{2}} \\
	\mathrm{III} &= (1+e^{s} \tau_{s}) \nrm{\eta_{\geq i_{0}} \varpi_{j} M^{(2k+3)}}_{L^{2}}, \\
	\mathrm{IV} &= \nrm{\breve{\calT}_{2k+3} (\eta_{\geq i_{0}} \varpi_{j}) U'}_{L^{2}}.
\end{align*}
We claim that, for some $c > 0$ and $\sgm_{0}$ sufficiently large depending on $A$,
\begin{equation*}
	\sup_{j \leq j_{0}} \int_{\sgm_{0}}^{\sgm_{1}}  \left( \mathrm{I} + \mathrm{II} + \mathrm{III} + \mathrm{IV} \right) \, \ud s
	\aleq 2^{\frac{3}{2} i_{0}} + 2^{-c i_{0}} A + e^{-c \sgm_{0}} A.
\end{equation*}
Since the LHS of \eqref{eq:U-high-w-en} is equivalent to \eqref{eq:U-high-w-outside}, \eqref{eq:ibst-w-high} would follow from the claim by taking $i_{0}$, $A$, and $\sgm_{0}$ large enough (in this order).

The contributions of $\mathrm{I}$ and $\mathrm{II}$ are bounded using Lemma~\ref{lem:mult-w} and \eqref{eq:forcing-L-j-L2} (as well as \eqref{eq:bst-w-1}, \eqref{eq:bst-w-high}, and \eqref{eq:bootstrapmod}) by $e^{-\mu \sgm_{0}} A$ and $e^{-\sgm_{0}} (1+\kpp_{0})$, respectively. To treat $\mathrm{III}$, we begin by noting that
\begin{equation*}
\begin{aligned}
&\nrm{\eta_{\geq i_{0}} \varpi_{j} M^{(2k+3)}}_{L^{2}} \\
&\aleq \nrm{U'}_{L^{\infty}(\tilde{A}_{j})} \left( 2^{(2k+\frac{3}{2} + \frac{1}{b}) j} 2^{((2k+\frac{3}{2})b - \frac{1}{2}) (s-\sgm_{0})}\nrm{U^{(2k+3)}}_{L^{2}(\tilde{A}_{j})} + 2^{(\frac{1}{b} - \frac{1}{2}) j} 2^{(1 - \frac{1}{2} b) (s-\sgm_{0})} \nrm{U'}_{L^{2}(\tilde{A}_{j})} \right),
\end{aligned}
\end{equation*}
where $\tilde{A}_{j}$ is a slight enlargement of $\supp \varpi_{j}$. Indeed, this inequality is proved by harmlessly substituting $U'$ by $\tilde{\eta}_{j} U'$, where $\supp \tilde{\eta}_{j} \subseteq \tilde{A}_{j}$ and $\tilde{\eta}_{j} = 1$ on $\supp \varpi_{j}$, then applying the usual Gagliardo--Nirenberg inequalities. Then using \eqref{eq:decayup-int}, \eqref{eq:bst-w-1}, and \eqref{eq:bst-w-high} (as well as \eqref{eq:bootstrapmod}), we obtain
\begin{equation*}
	\int_{\sgm_{0}}^{\sgm_{1}} \mathrm{III} \, \ud s \aleq (2^{-r i_{0}} + A e^{-r \sgm_{0}})A,
\end{equation*}
which is acceptable. Finally, the contribution of $\mathrm{IV}$ is handled similarly as the term $\breve{\calT}_{1} (\eta_{\geq i_{0}} \varpi_{j})$ in Step~1, where we use Lemma~\ref{lem:high} and \eqref{eq:bst-w-1}--\eqref{eq:bst-w-high} instead of \eqref{eq:ibst1}--\eqref{eq:ibst3} and \eqref{eq:bst-w-1}, respectively. We may show that
\begin{equation*}
	\int_{\sgm_{0}}^{\sgm_{1}} \mathrm{IV} \, \ud s \aleq 2^{(\frac{1}{b}+2k+3) i_{0}} + (2^{-r i_{0}} + A e^{-r \sgm_{0}}) A,
\end{equation*}
which completes the proof. \qedhere \end{proof}

\section{Estimates on modulation parameters and unstable coefficients}\label{sec:unstable}
In this section, we analyze the ODE's satisfied by the modulation parameters and the coefficients $W^{(j)}(s, 0)$. We prove the bootstrap assumptions in Lemma~\ref{lem:main} involving the modulation ODE's and $W^{(2k+1)}(s, 0)$, as well as Lemma~\ref{lem:top} in its entirety.

\subsection{Control of the modulation parameters and \texorpdfstring{$w_{2k+1}$}{w(2k+1)}} 
We establish sharp bounds on the ODE's for modulation parameters, which improve \eqref{eq:bootstrapmod}.
\begin{lemma}[Control of the modulation parameters]\label{lem:modulation}
Assume the hypotheses of Lemma~\ref{lem:main}. Then we have
\begin{align} 
	\abs{e^{s} \tau_{s}} &\leq C_{A} e^{-\min\set{2 \gmm, \mu} s}, \label{eq:modul-tau-est} \\
	\abs{e^{bs} \xi_{s} - (1+e^{s} \tau_{s}) e^{(b-1) s} \kpp } &\leq C_{A} e^{- \min\set{2\gmm, \mu} s}, \label{eq:modul-xi-est} \\
	\abs{e^{(b-1) s} \kpp_{s}} &\leq C_{A} e^{-\min\set{2\gmm, \mu_{0}} s}. \label{eq:modul-kpp-est}
\end{align}
In particular, if $\sgm_{0}$ is sufficiently large (depending only on $A$, $y_{0}$, $\kpp_{0}$), \eqref{eq:ibootstrapmod} holds.
\end{lemma}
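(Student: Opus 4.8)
The plan is to extract the three bounds directly from the modulation equations \eqref{eq:modul-kpp}--\eqref{eq:modul-xi}, using the forcing estimates from Lemma~\ref{lem:forcing} together with the bootstrap assumptions. First I would record the key input bounds available under the hypotheses of Lemma~\ref{lem:main}: from \eqref{eq:forcing-H-0} and \eqref{eq:forcing-L-j} one gets $|F^{(0)}(s,0)| \aleq (A+\kpp_0) e^{-\mu_0 s} + (1+\kpp_0)e^{-(2-b)s} \aleq C_A e^{-\mu_0 s}$, and from \eqref{eq:forcing-H-j} with $j=1$ together with \eqref{eq:forcing-L-j} with $j=1$ one gets $|F^{(1)}(s,0)| \aleq A e^{-\mu s} + (1+\kpp_0)e^{-2s} \aleq C_A e^{-\mu s}$; similarly $|F^{(2k)}(s,0)| \aleq C_A e^{-\mu s}$. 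I would also note that $|N(2k)|_{y=0}|$ involves only products of the $w_j$, $j \le 2k+1$, with at least two factors among $w_2,\dots,w_{2k-1},w_{2k+1}$ (since $w_0=w_1=w_{2k}=0$); hence by the trapping condition \eqref{eq:trapped} and \eqref{eq:bootstraplow2}, $|N(2k)|_{y=0}| \aleq e^{-2\gmm s}$ (when $k=1$, $N(2k)|_{y=0}=0$). Also $(2k)!+w_{2k+1}$ is bounded away from zero by \eqref{eq:bootstraplow2}, so \eqref{eq:modul-xi} can be solved for $e^{bs}\xi_s - (1+e^s\tau_s)e^{(b-1)s}\kpp$.

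The second step is to close the estimates in the right order. From \eqref{eq:modul-xi}, divide by $(2k)!+w_{2k+1}$ to obtain
\[
	\abs{e^{bs}\xi_s - (1+e^s\tau_s)e^{(b-1)s}\kpp} \aleq \abs{N(2k)|_{y=0}} + (1+\abs{e^s\tau_s})\abs{F^{(2k)}(s,0)} \aleq e^{-2\gmm s} + C_A e^{-\mu s},
\]
using \eqref{eq:bootstrapmod} to bound $|e^s\tau_s| \le A e^{-\gmm s} \le 1$; this gives \eqref{eq:modul-xi-est}. Plugging this into \eqref{eq:modul-tau} and using $|w_2| \le e^{-\gmm s}$ (from \eqref{eq:trapped}, or $w_2 = 0$ trivially when $k=1$ since the term is absent), the last term of \eqref{eq:modul-tau} is $\aleq e^{-\gmm s}(e^{-2\gmm s} + C_A e^{-\mu s})$, which is lower order, so
\[
	\abs{e^s\tau_s} \aleq (1+\abs{e^s\tau_s})\abs{F^{(1)}(s,0)} + e^{-\gmm s}(e^{-2\gmm s}+C_A e^{-\mu s}) \aleq C_A e^{-\min\set{2\gmm,\mu}s},
\]
which is \eqref{eq:modul-tau-est}. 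Finally, rewrite \eqref{eq:modul-kpp} as $e^{(b-1)s}\kpp_s = (1+e^s\tau_s)F^{(0)}(s,0) + (1+e^s\tau_s)e^{(b-1)s}\kpp - e^{bs}\xi_s$. The last two terms combine to $-(e^{bs}\xi_s - (1+e^s\tau_s)e^{(b-1)s}\kpp)$, already controlled by \eqref{eq:modul-xi-est}, so $|e^{(b-1)s}\kpp_s| \aleq C_A e^{-\mu_0 s} + e^{-2\gmm s} + C_A e^{-\mu s} \aleq C_A e^{-\min\set{2\gmm,\mu_0}s}$, which is \eqref{eq:modul-kpp-est} (note $\mu_0 \le \mu$).

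The only real subtlety — and the step I'd be most careful about — is the bookkeeping for the nonlinear term $N(2k)|_{y=0}$: I must verify that every monomial in it has total degree $\ge 2$ in the ``small'' coefficients so that the trapping/bootstrap bounds give the stated $e^{-2\gmm s}$ decay, and in particular that no stray \emph{linear} occurrence of $w_{2k+1}$ (which is only bounded by $O(1)$, not small) survives; this follows from the algebraic structure $N(2k) = \p_y^{2k}(WW') - WW^{(2k+1)} - (2k+1)W'W^{(2k)}$ evaluated at $y=0$ after imposing $w_0=w_1=w_{2k}=0$. Once $\sgm_0$ is taken large enough (depending on $A$, $y_0$, $\kpp_0$) that $C_A e^{-\min\set{2\gmm,\mu_0}\sgm_0} \le \tfrac{1}{3}$, the three estimates sum to give $|e^s\tau_s| + |e^{(b-1)s}\kpp_s| + |e^{bs}\xi_s - (1+e^s\tau_s)e^{(b-1)s}\kpp| \le e^{-\gmm s}$ (using $\gmm < \mu_0 \le \min\set{2\gmm,\mu_0}$... more precisely $\gmm < \min\set{2\gmm,\mu_0}$ since $\gmm < \mu_0$), which is exactly \eqref{eq:ibootstrapmod}.
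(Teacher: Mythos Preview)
Your proof is correct and follows essentially the same route as the paper's: extract the three bounds from the modulation equations \eqref{eq:modul-kpp}--\eqref{eq:modul-xi} using Lemma~\ref{lem:forcing}, the trapping condition \eqref{eq:trapped}, and \eqref{eq:bootstraplow2}. The only cosmetic difference is ordering --- the paper treats $\tau$ first (bounding the $\xi$-combination via the bootstrap \eqref{eq:bootstrapmod}), then $\xi$, then $\kpp$, whereas you do $\xi$ first; both close.

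Two minor remarks. First, your formula for $N(2k)$ is off by one term: in the paper $N(j)=\rd_y^{j}(WW')-WW^{(j+1)}$ (you have confused it with $M^{(j)}$, which has the extra $-(j+1)W'W^{(j)}$); this is harmless since $w_1w_{2k}=0$ anyway. Second, your caution about a stray linear $w_{2k+1}$ is unnecessary: the indices in $N(2k)\vert_{y=0}=\sum_{l=1}^{2k}\binom{2k}{l}w_l w_{2k+1-l}$ never exceed $2k$, and after imposing $w_0=w_1=w_{2k}=0$ every surviving term is a product $w_l w_{2k+1-l}$ with both indices in $\{2,\dots,2k-1\}$, giving directly $|N(2k)\vert_{y=0}|\aleq e^{-2\gmm s}$ (and $N(2k)\vert_{y=0}=0$ when $k=1$).
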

\begin{proof}
For $\tau_{s}$, by \eqref{eq:modul-tau}, we have
\begin{align*}
	\abs{e^{s} \tau_{s}} 
	\leq \frac{\abs{F^{(1)}(s, 0)} + \abs{w_2(e^{bs}\xi_s - (1+e^{s}\tau_s)e^{(b-1)s}\kpp )}}{1 - \abs{F^{(1)}(s, 0)}}.
\end{align*}
By \eqref{eq:bootstrapmod}, \eqref{eq:trapped} (in the case $k \geq 2$ for $w_{2}$), \eqref{eq:forcing-H-j} and \eqref{eq:forcing-L-j} with $j = 1$, $\abs{F^{(1)}(s, 0)} \leq C_{A} \left( e^{- \mu s} + e^{- 2\gmm s}\right)$ (since $0 < \mu < 2$), which is acceptable.

For $\xi_{s}$, we use \eqref{eq:modul-xi} to bound
\begin{align*}
	\abs{e^{bs} \xi_{s} - (1+e^{s} \tau_{s}) e^{(b-1) s} \kpp }
	\leq \frac{1}{(2k)!-1} \left(\abs{\left. N(2k) \right|_{y=0}} + (1+\abs{e^{s} \tau_{s}}) \abs{F^{(2k)}(s, 0)} \right),
\end{align*}
where we crucially used \eqref{eq:bootstraplow2} to ensure that $(2k)!+w_{2k+1} \geq (2k)!-1 > 0$ in the denominator.
By \eqref{eq:trapped} and \eqref{eq:orth-cond}, we have $\abs{\left. N(2k) \right|_{y=0}} \aleq e^{-8 \gmm s}$. On the other hand, by \eqref{eq:forcing-H-j}, \eqref{eq:forcing-L-j} with $j = 2k$, and \eqref{eq:modul-tau-est}, we have $\abs{F^{(2k)}(s, 0)} \leq C_{A} e^{- \mu s}$ (since $0 < \mu < 2$). At this point, \eqref{eq:modul-kpp-est} follows.

Finally, for $\kpp_{s}$, we use \eqref{eq:modul-kpp} to bound
\begin{align*}
\abs{e^{(b-1) s} \kpp_{s}} \leq  \abs{e^{bs} \xi_{s} - (1+e^{s} \tau_{s}) e^{(b-1) s} \kpp} + (1+\abs{e^{s} \tau_{s}}) \abs{F^{(0)}(s, 0)}.
\end{align*}
By \eqref{eq:forcing-H-0} and \eqref{eq:forcing-L-j} with $j = 0$, we have $\abs{F^{(0)}(s, 0)} \leq C e^{- \min\set{\mu_{0}, 2-b} s} = C e^{-\mu_{0} s}$ (since $2-b = \frac{2k-1}{2k}$). Combined with the previous bounds \eqref{eq:modul-tau-est} and \eqref{eq:modul-xi-est} for $\abs{e^{s} \tau_{s}}$ and $\abs{e^{bs} \xi_{s} - (1+e^{s} \tau_{s}) e^{(b-1) s} \kpp}$, respectively, \eqref{eq:modul-kpp-est} follows. 

Finally, since $\gmm < \min\set{2 \gmm, \mu, \mu_{0}}$ by \eqref{eq:gmm-def}, \eqref{eq:ibootstrapmod} follows from \eqref{eq:modul-tau-est}--\eqref{eq:modul-kpp-est} provided that $\sgm_{0}$ sufficiently large. \qedhere
\end{proof}

Next, we study the ODE satisfied by $W^{(2k+1)}(s, 0)$ and improve~\eqref{eq:bootstraplow2}. 
\begin{lemma} [Control of the stable coefficient] \label{lem:bootstraplow2}
Assume the hypotheses of Lemma~\ref{lem:main}. Then \eqref{eq:ibootstraplow2} holds true for $\sgm_{0}$ sufficiently large.
\end{lemma}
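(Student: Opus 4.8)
The plan is to derive and analyze the ODE satisfied by $w_{2k+1}(s) = W^{(2k+1)}(s,0)$, which comes from specializing the general modulation equation \eqref{eq:modul-gen} to $j = 2k+1$. The key observation is the sign of the linear coefficient: in \eqref{eq:modul-gen} the coefficient of $w_j$ is $(j-1)(b-1) - 1$, and for $j = 2k+1$ with $b = \frac{2k+1}{2k}$ this equals $2k \cdot \frac{1}{2k} - 1 = 0$. So there is no linear term, and instead the evolution of $w_{2k+1}$ is governed by the forcing term $F^{(2k+1)}(s,0)$, the nonlinear terms $N(2k+1)|_{y=0}$, the modulation-coupling terms, and the contributions from $\bar{U}^{(2k+2)}(0)$ and $\rd_y^{2k+2}(\bar U + W)^2|_{y=0}$. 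Thus the strategy is simply to show that all these terms are integrable in $s$ with small total mass, so that $|w_{2k+1}(s)|$ stays below $\frac12$ given that its initial value is small by \eqref{eq:w2k+1-ini}.

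First I would write out the $j = 2k+1$ case of \eqref{eq:modul-gen} explicitly, using $w_0 = w_1 = w_{2k} = 0$ (from \eqref{eq:orth-cond}) and the profile identities $\bar U(s,0) = 0$, $\bar U'(s,0) = 1$, and the known values of higher derivatives of $\mathring U$ at $0$ (in particular $\bar U^{(2k+1)}(0) = (2k)!$, with $\bar U^{(2k+2)}(0) = 0$ by the form \eqref{eq:tchoice}). Next I would estimate each term: (i) the forcing term is controlled by $|F^{(2k+1)}(s,0)| \le C_A e^{-\mu s}$ via \eqref{eq:forcing-H-j} and \eqref{eq:forcing-L-j} with $j = 2k+1$ — here one needs the pointwise bound on $\calH(U^{(2k+1)})$, which follows from \eqref{eq:forcing-H-j} since $2k+1 \le 2k+1$; (ii) the modulation-coupling terms $(- e^{bs}\xi_s + \ldots)(\bar U^{(2k+2)}(0) + w_{2k+2})$ and $e^s \tau_s (\ldots)$ are bounded using \eqref{eq:ibootstrapmod} (now available from Lemma~\ref{lem:modulation}) together with a pointwise bound on $w_{2k+2} = W^{(2k+2)}(s,0)$ obtained by Sobolev embedding from $\|U^{(2k+3)}\|_{L^2} \le 2A$ (this is precisely the technical reason \eqref{eq:bst2} is stated at order $2k+3$); (iii) the nonlinear term $N(2k+1)|_{y=0}$ and the terms from $\rd_y^{2k+2}(\bar U + W)^2$ are products of lower-order Taylor coefficients of $W$ at $0$ and of $\bar U$, which either vanish (by \eqref{eq:orth-cond}) or decay like $e^{-\gmm s}$ (by \eqref{eq:bootstrapmod}, \eqref{eq:trapped}), times factors that are $O(1)$ or exponentially decaying. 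In every case the bound is $\le C_A e^{-c s}$ for some $c > 0$.

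Having reduced matters to $|\rd_s w_{2k+1}(s)| \le C_A e^{-cs}$ on $[\sgm_0, \sgm_1]$, I would integrate from $\sgm_0$ to $s$ and use the initial bound $|w_{2k+1}(\sgm_0)| \le C\eps_0 e^{-(1+2kb)\sgm_0}$ from \eqref{eq:w2k+1-ini} (which in particular is $\le \frac14$ once $\sgm_0$ is large) to conclude $|w_{2k+1}(s)| \le \frac14 + C_A c^{-1} e^{-c\sgm_0} \le \frac12$, provided $\sgm_0$ is taken sufficiently large depending on $A$ (and hence on $y_0$, $\kpp_0$, consistent with Remark~\ref{rem:deps}). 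This gives \eqref{eq:ibootstraplow2}.

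The main obstacle I anticipate is organizing the term $N(2k+1)|_{y=0}$ together with the Taylor coefficients appearing in $\rd_y^{2k+2}(\bar U + W)^2|_{y=0}$: these involve $w_j$ for $j$ up to $2k+1$ as well as $w_{2k+2}$, and one must check that every product is either quadratic in the decaying quantities $\{w_j\}_{j=2}^{2k-1}$ (each $O(e^{-\gmm s})$, hence the product is $O(e^{-2\gmm s})$), or contains a factor that vanishes identically, or is multiplied by an exponentially small modulation factor. The coefficient $w_{2k+2}$ itself is only bounded (not small), but it always appears multiplied either by a modulation parameter combination that is $O(e^{-\gmm s})$ (from \eqref{eq:ibootstrapmod}) or by $w_0 = 0$, so it does not cause trouble. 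A careful but routine bookkeeping of these contributions, using the already-established bounds, closes the argument; no new analytic idea beyond those in Lemmas~\ref{lem:forcing}, \ref{lem:high}, and \ref{lem:modulation} is needed.
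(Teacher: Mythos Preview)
Your proposal is correct and follows essentially the same approach as the paper: you identify the vanishing of the linear coefficient $(j-1)(b-1)-1$ at $j=2k+1$, bound all remaining terms (forcing via Lemma~\ref{lem:forcing}, modulation-coupling via \eqref{eq:ibootstrapmod} together with the Sobolev bound on $w_{2k+2}$ from \eqref{eq:bst2} and Lemma~\ref{lem:uii}, and nonlinear terms via \eqref{eq:trapped} and \eqref{eq:orth-cond}) by $C_A e^{-cs}$, and integrate from $\sgm_0$ using \eqref{eq:w2k+1-ini}. One minor slip: $\bar U'(s,0) = -1$, not $1$ (cf.~\eqref{eq:tchoice}), though this does not affect your argument.
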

\begin{proof}
Using equation~\eqref{eq:modul-gen} with $j = 2k+1$, we have the equation for $w_{2k+1} = W^{(2k+1)}(s,0)$ (since $\bar U^{(2k+1)}(0) = (2k)!$ and $\bar U^{(2k+2)}(0) = 0$):
\begin{equation} \label{eq:modul-2k+1}
\begin{aligned}
	&\rd_{s} w_{2k+1} 
	+ (1+e^{s} \tau_{s}) \left. N(2k+1) \right|_{y = 0} \\
	&+ \left(- e^{b s} \xi_{s} + (1+e^{s} \tau_{s}) e^{(b-1) s} \kpp \right) w_{2k+2}
	- e^{s} \tau_{s} \left( (2k)! - (2k+2) w_{2k+1} \right) \\
	& =(1+e^{s} \tau_{s}) F^{(2k+1)}(s, 0).
\end{aligned}
\end{equation}
Note the crucial vanishing of the constant in front of the term $w_{2k+1}$. 
Hence all terms other than $\rd_{s} w_{2k+1}$ decay exponentially as $s$ increases, thanks to the trapping assumption~\eqref{eq:trapped}, the improved bound \eqref{eq:ibootstrapmod}, and the bounds $|W^{(2k+1)}(s,0)| \leq 1$ as well as $|W^{(2k+2)}(s,0)| \leq C A$ (this last bound follows from the estimate~\eqref{eq:uii} in Lemma~\ref{lem:uii} and~\eqref{eq:bst2} plus Sobolev embedding). Moreover, by \eqref{eq:ibootstrapmod}, \eqref{eq:forcing-H-j} and  \eqref{eq:forcing-L-j}, we have $\abs{(1+e^{s} \tau_{s}) F^{(2k+1)}(s, 0)} \aleq_{A} e^{-\mu s}$. Finally, recall from \eqref{eq:w2k+1-ini} that the initial data at $\sgm_{0}$ are such that $|\p^{(2k+1)}_y W(\sgm_{0}, 0) |\leq C \eps_{0} e^{-(1+2kb)\sgm_{0}}$. Therefore, we can prove the improved bootstrap bound ~\eqref{eq:ibootstraplow2} upon choosing $\sgm_{0}$ sufficiently large, as desired. \qedhere
\end{proof}

\subsection{Control of the unstable coefficients: proof of Lemma~\ref{lem:top}} 
The purpose of this subsection is to prove Lemma~\ref{lem:top} (shooting lemma), which is relevant when $k > 1$. We start by establishing the key \emph{outgoing property} of the unstable ODE near the boundary of the trapped region:

\begin{lemma} \label{lem:outgoing}
Under the hypotheses of Lemma~\ref{lem:main}, there exists $c_{0} > 0$ such that, for $\sgm_{0} $ sufficiently large (depending only on $k$, $\mu$, $\gmm$ and $A$), the following holds. For any $s \in [\sgm_{0}, \sgm_{1}]$ such that
\begin{equation} \label{eq:near-bdry}
\frac{1}{2} e^{- \gmm s} < \vec{w}(s) < e^{- \gmm s}.
\end{equation}
we have
\begin{equation} \label{eq:outgoing}
	\rd_{s} \abs{\vec{w}(s)}^{2} > 2 c_{0} \abs{\vec{w}(s)}^{2}.
\end{equation}
\end{lemma}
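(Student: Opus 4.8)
The plan is to read off the dynamics of $\vec{w}$ from the ODE system \eqref{eq:odesys}, namely $\p_s \vec{w} - D\vec{w} + (1+e^{s}\tau_{s})\mathcal{N}(\vec{w}) = M\vec{w} + \vec{f}$, and show that in the annular region \eqref{eq:near-bdry} the linear unstable term $D\vec{w}$ dominates everything else. First I would compute $\p_s |\vec{w}|^{2} = 2\vec{w}\cdot\p_s\vec{w} = 2\vec{w}\cdot D\vec{w} - 2(1+e^{s}\tau_{s})\vec{w}\cdot\mathcal{N}(\vec{w}) + 2\vec{w}\cdot M\vec{w} + 2\vec{w}\cdot\vec{f}$. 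Since $D = \mathrm{diag}(\lmb_{2},\dots,\lmb_{2k-1})$ with every $\lmb_{j} = 1 - \frac{j-1}{2k} > 0$, in fact $\lmb_{j}\geq \lmb_{2k-1} = 1 - \frac{2k-2}{2k} = \frac{1}{k} > 0$, we get the lower bound $\vec{w}\cdot D\vec{w} \geq \frac{1}{k}|\vec{w}|^{2}$. So setting $c_{0}$ to be something like $\frac{1}{2k}$ (or any number strictly below $\frac{1}{k}$), it suffices to show the remaining three terms are $o(|\vec{w}|^{2})$ as $s\to\infty$, uniformly in the region \eqref{eq:near-bdry}.

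The three error terms are handled as follows. For the nonlinear term: $\mathcal{N}(\vec{w})$ has quadratic entries in $\vec{w}$, so $|\mathcal{N}(\vec{w})| \aleq |\vec{w}|^{2}$, and $|1+e^{s}\tau_{s}|\leq 2$ by \eqref{eq:bootstrapmod}, hence $|(1+e^{s}\tau_{s})\vec{w}\cdot\mathcal{N}(\vec{w})| \aleq |\vec{w}|^{3} \leq e^{-\gmm s}|\vec{w}|^{2}$ using $|\vec{w}| < e^{-\gmm s}$ from the upper bound in \eqref{eq:near-bdry}. For the matrix term: $M = e^{s}\tau_{s} I + (e^{bs}\xi_{s} - (1+e^{s}\tau_{s})e^{(b-1)s}\kpp)N$, and by \eqref{eq:bootstrapmod} (or better, the improved \eqref{eq:ibootstrapmod}, which holds on $[\sgm_{0},\sgm_{1}]$ by the conclusion of Lemma~\ref{lem:main}) both $|e^{s}\tau_{s}|$ and $|e^{bs}\xi_{s} - (1+e^{s}\tau_{s})e^{(b-1)s}\kpp|$ are $\aleq e^{-\gmm s}$, so $\|M\| \aleq e^{-\gmm s}$ and $|\vec{w}\cdot M\vec{w}| \aleq e^{-\gmm s}|\vec{w}|^{2}$. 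For the forcing term: $\vec{f} = ((1+e^{s}\tau_{s})F^{(2)}(s,0),\dots,(1+e^{s}\tau_{s})F^{(2k-1)}(s,0))$, and by \eqref{eq:forcing-H-j}, \eqref{eq:forcing-L-j} (with $1\leq j \leq 2k-1$, all strictly below top order), together with \eqref{eq:bootstrapmod}, we get $|\vec{f}| \aleq_{A} e^{-\mu s}$. Then by Cauchy--Schwarz and the \emph{lower} bound $|\vec{w}| > \frac{1}{2}e^{-\gmm s}$ in \eqref{eq:near-bdry}, $|\vec{w}\cdot\vec{f}| \leq |\vec{w}||\vec{f}| \aleq_{A} e^{-\mu s}|\vec{w}| = e^{-\mu s}\cdot|\vec{w}|^{-1}\cdot|\vec{w}|^{2} \aleq_{A} e^{-\mu s}e^{\gmm s}|\vec{w}|^{2} = e^{-(\mu-\gmm)s}|\vec{w}|^{2}$, and this decays since $\gmm < \mu_{0} \leq \mu$ by \eqref{eq:gmm-def} and \eqref{eq:mu0-def}.

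Combining, we obtain $\p_s |\vec{w}|^{2} \geq \left(\frac{2}{k} - C_{A}(e^{-\gmm s} + e^{-(\mu-\gmm)s})\right)|\vec{w}|^{2}$, and choosing $\sgm_{0}$ large enough (depending only on $k$, $\mu$, $\gmm$, $A$) so that the error is below $\frac{1}{k}$ on $[\sgm_{0},\infty)$, we conclude $\p_s |\vec{w}|^{2} > \frac{1}{k}|\vec{w}|^{2} > 2c_{0}|\vec{w}|^{2}$ with $c_{0} = \frac{1}{4k}$, which is \eqref{eq:outgoing}. The main subtlety — really the only place one must be slightly careful — is the forcing term: it does not carry a power of $|\vec{w}|$, so one genuinely needs the lower bound $|\vec{w}| > \frac{1}{2}e^{-\gmm s}$ to trade $|\vec{w}|$ against $e^{\gmm s}$, and then the gain $e^{-(\mu-\gmm)s}$ hinges precisely on the constraint $\gmm < \mu_{0}$. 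Everything else is a routine application of the bootstrap assumptions and the forcing estimates from Lemma~\ref{lem:forcing}. Note this also clarifies the promised use of \eqref{eq:outgoing} in proving Lemma~\ref{lem:top}: it says that once $\vec{w}$ reaches the outer boundary $|\vec{w}| = e^{-\gmm s}$ it must exit the trapped ball transversally, which is exactly the input needed for the Brouwer-type / no-retraction shooting argument.
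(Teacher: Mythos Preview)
Your proof is correct and follows essentially the same approach as the paper: compute $\rd_{s}\abs{\vec{w}}^{2}$ from the ODE system \eqref{eq:odesys}, extract the main positive term $2\vec{w}\cdot D\vec{w}\geq 2\lmb_{2k-1}\abs{\vec{w}}^{2}$, and show the nonlinear, matrix, and forcing contributions are $o(\abs{\vec{w}}^{2})$ using respectively \eqref{eq:trapped}, \eqref{eq:ibootstrapmod}, and the combination of Lemma~\ref{lem:forcing} with the lower bound in \eqref{eq:near-bdry} and $\gmm<\mu$. The only cosmetic difference is that the paper takes $c_{0}=\tfrac{1}{2}\lmb_{2k-1}$ while you take $c_{0}=\tfrac{1}{4k}$, which is immaterial.
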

\begin{proof}
We recall the vector $\vec{w}(s) = (w_2, w_1, \ldots, w_{2k-1})(s)$, which satisfies the following system of ODEs: 
\begin{equation}\label{eq:odesysrec}
  \p_s \vec{w}(s) - D \vec{w}(s) + (1+e^{s} \tau_{s}) \mathcal{N}(\vec{w}(s)) = M \vec{w}(s) + \vec{f}(s),
\end{equation}
Here $D = \mathrm{diag}\,\left( \lmb_{2}, \ldots, \lmb_{2k-1} \right)$ with $\lmb_{j} = 1 - \frac{j-1}{2k}$, so that $1 > \lmb_{2} > \ldots > \lmb_{2k-1} > 0$. We put
\begin{equation*}
	c_{0} = \frac{1}{2} \lmb_{2k-1}.
\end{equation*}
We now evaluate $\rd_{s} \abs{\vec{w}(s)}^{2}$ using \eqref{eq:odesysrec}. The contribution of $D \vec{w}(s)$ gives the main positive term $4 c_{0} \abs{\vec{w}}^{2}$. We claim that the contribution of the remaining terms are bounded below by $- 2 c_{0}  \abs{\vec{w}}^{2}$ if $\sgm_{0}$ is sufficiently large. First, for $\mathcal{N}$, we have, by \eqref{eq:trapped}
\begin{equation}
|\mathcal{N}(\vec{w}(s))| \leq C_k |\vec{w}(s)|^2 \leq C_{k} e^{-\gmm s} \abs{\vec{w}(s)},
\end{equation}
which is acceptable. Next, $M \vec{w}(s)$ is acceptable thanks to
\begin{equation} \label{eq:mod-matrix}
	\abs{M} \leq C_{k} e^{-\gmm s},
\end{equation}
which follows from \eqref{eq:ibootstrapmod}. Finally, by \eqref{eq:forcing-H-j}, \eqref{eq:forcing-L-j} for $j \geq 2$, \eqref{eq:ibootstrapmod} and \eqref{eq:near-bdry}, we have
\begin{equation*}
	\abs{\vec{f}} \leq C_{k, A} e^{-\mu s} \leq 2C_{k, A} e^{-(\mu - \gmm) s} \abs{\vec{w}(s)}.
\end{equation*}
Recalling from \eqref{eq:gmm-def} that $\gmm < \mu_{0} \leq \mu$, the contribution of $\vec{f}(s)$ is acceptable. \qedhere
\end{proof}
Finally, we are ready to prove Lemma~\ref{lem:top}.
\begin{proof}[Proof of Lemma~\ref{lem:top}]
For each $\abs{\vec{w}_{0}} \leq e^{- \gmm \sgm_{0}}$, denote by $U_{\vec{w}_{0}}(s, y)$ the solution with initial data at $s = \sgm_{0}$ induced by $\vec{w}_{0}$ and $W_{0}$, and write $\vec{w}_{\vec{w}_{0}}(s)$ for the vector $(\rd_{y}^{2} U_{\vec{w}_{0}}(s,0), \ldots, \rd_{y}^{2k-1} U_{\vec{w}_{0}}(s,0))$. For the purpose of contradiction, suppose that for all $\vec{w}_{0}$ satisfying $\abs{\vec{w}_{0}} \leq e^{- \gmm \sgm_{0}}$, $U_{\vec{w}_{0}}$ does not remain trapped forever. By Lemma~\ref{lem:main} and a standard bootstrap argument, there exists a unique $\sgm_{trap}(\vec{w}_{0}) > \sgm_{0}$ such that $\abs{\vec{w}_{\vec{w}_{0}}(\sgm_{trap}(\vec{w}_{0}))} = e^{- \gmm \sgm_{trap}(\vec{w}_{0})}$ while $\abs{\vec{w}_{\vec{w}_{0}}(s)} < e^{- \gmm s}$ for all $\sgm_{0} \leq s < \sgm_{trap}(\vec{w}_{0})$ (see the discussion following Definition~\ref{def:trapped}). The key step in the proof is to establish the following:

\smallskip
\noindent {\bf Claim.} The map $H: B_{0}(e^{-\gmm \sgm_{0}}) \to \rd B_{0}(1)$, $\vec{w}_{0} \mapsto e^{-\gmm \sgm_{trap}(\vec{w}_{0})} \vec{w}_{\vec{w}_{0}}(\sgm_{trap}(\vec{w}_{0}))$ is continuous.

\smallskip
Assuming the claim, we first conclude the proof of the lemma. Note, first, that for $\vec{w}_{0} \in \rd B_{0}(e^{-\gmm \sgm_{0}})$, we trivially have $\sgm_{trap} = \sgm_{0}$ and $\vec{w}_{\vec{w}_{0}}(\sgm_{trap}) = \vec{w}_{0}$; hence $H$ is equal to the identity when restricted to the boundary $\rd B_{0}(e^{-\gmm\sgm_{0}})$. Hence, by composing with $B_{0}(1) \to B_{0}(e^{-\gmm \sgm_{0}})$, $\vec{v} \mapsto e^{-\gmm \sgm_{0}} \vec{v}$, we obtain a continuous map from $B_{0}(1)$ into $\rd B_{0}(1)$ that is equal to the identity map on $\rd B_{0}(1)$ (i.e., a continuous retraction $B_{0}(1) \to \rd B_{0}(1)$). As is well-known (cf.~proof of Brouwer's fixed point theorem), such a map does not exist, which is a contradiction.

It remains to establish the claim. Fix $\vec{w}_{0} \in B_{0}(e^{-\gmm \sgm_{0}})$. By definition, \eqref{eq:trapped} holds for $s \in [\sgm_{0}, \sgm_{trap}(\vec{w}_{0})]$, so $U_{\vec{w}_{0}}$ obeys \eqref{eq:ibst1}--\eqref{eq:ibootstraplow2} on $s \in [\sgm_{0}, \sgm_{trap}(\vec{w}_{0})]$. By a standard argument involving analysis of the linearized system, it can be shown that $\vec{v} \mapsto U_{\vec{v}}$ is Lipschitz continuous\footnote{We use Lipschitz continuity here as it is easier to observe for \eqref{eq:fkdv}, which is quasilinear. The minor price we have to pay is that we cannot work with the highest order topology $H^{2k+3}$, but rather with the lower order topology $H^{2k+2}$.} near $\vec{w}_{0}$ in $C_{s} (I; H^{2k+2})$, where $I$ is a fixed open interval containing $[\sgm_{0}, \sgm_{trap}(\vec{w}_{0})]$. By Sobolev embedding, it follows that $\vec{w}_{\vec{v}}(s)$ depends continuously on $(s, \vec{v}) \in I \times B_{\vec{w}_{0}}(\dlt)$ for some $\dlt > 0$. To establish the continuity of $H$ at $\vec{w}_{0}$, it therefore only remains to show that   that $\vec{v} \mapsto \sgm_{trap}(\vec{v})$ is continuous at $\vec{v} = \vec{w}_{0}$. 

To prove the continuity of $\sgm_{trap}$, we begin by using the outgoing property near the boundary (Lemma~\ref{lem:outgoing}) to make the following observation: for an arbitrary sufficiently small number $\eps > 0$, if $U_{\vec{v}}$ is trapped on $[\sgm_{0}, \sgm_{1})$ and $\vec{w}_{\vec{v}}(\sgm_{1}) > e^{-\gmm \sgm_{1} - c_{0} \eps}$ then $\abs{\sgm_{trap}(\vec{v}) - \sgm_{1}} < \eps$. When $\vec{w}_{0} \in \rd B_{0}(e^{-\gmm \sgm_{0}})$, the continuity of $\sgm_{trap}$ at $\vec{w}_{0}$ follows immediately by applying the preceding statement with $\sgm_{1} = \sgm_{0}$, since $\eps > 0$ may be arbitrarily small. When $\vec{w}_{0} \not \in \rd B_{0}(e^{-\gmm \sgm_{0}})$, we have $\sgm_{trap}(\vec{w}_{0}) > \sgm_{0}$. Clearly, there exists $\sgm_{1} \in (\sgm_{0}, \sgm_{trap}(\vec{w}_{0}))$ such that $e^{-\gmm \sgm_{1} - c_{0} \eps} < \vec{w}_{\vec{w}_{0}}(\sgm_{1}) < e^{-\gmm \sgm_{1}}$ and $\abs{\sgm_{1} - \sgm_{trap}(\vec{w}_{0})} < \eps$. By Lemma~\ref{lem:main} and the strict inequality $\vec{w}_{\vec{w}_{0}}(\sgm_{1}) < e^{-\gmm \sgm_{1}}$, for $\vec{v}$ sufficiently close to $\vec{w}_{0}$, the corresponding solution $U_{\vec{v}}$ is trapped on $[\sgm_{0}, \sgm_{1}]$ and obeys $e^{-\gmm \sgm_{1} - c_{0} \eps} < \vec{w}_{\vec{v}}(\sgm_{1}) < e^{-\gmm \sgm_{1}}$. Hence, $\abs{\sgm_{trap}(\vec{v}) - \sgm_{trap}(\vec{w}_{0})} \leq \abs{\sgm_{trap}(\vec{v}) - \sgm_{1}} + \abs{\sgm_{trap}(\vec{w}_{0}) - \sgm_{1}}< 2 \eps$, which implies the desired continuity of $\sgm_{trap}$. \qedhere
\end{proof}

\bibliographystyle{abbrv}
\bibliography{fkdv1.bib}

\end{document}